\let\oldbibliography\thebibliography
\renewcommand{\thebibliography}[1]{\oldbibliography{#1}
\setlength{\itemsep}{0pt}} 
\pgfplotsset{width=10cm,compat=1.9}
\titleformat{\subsubsection}[runin]
       {\normalfont\itshape}
       {\thesubsubsection.}
       {0.5em}
       {}
       []
\newtheorem{introthm}{Theorem}
\newtheorem{thm}[subsubsection]{Theorem}
\newtheorem{lemma}[subsubsection]{Lemma}
\newtheorem{cor}[subsubsection]{Corollary}
\newtheorem{prop}[subsubsection]{Proposition}
\theoremstyle{definition}
\newtheorem{defn}[subsubsection]{Definition}
\newtheorem{exmp}[subsubsection]{Example}
\newtheorem{warning}[subsubsection]{{\fontencoding{U}\fontfamily{futs}\selectfont\char 66\relax} Warning {\fontencoding{U}\fontfamily{futs}\selectfont\char 66\relax}}
\theoremstyle{remark}
\newtheorem{rmk}[subsubsection]{Remark}
\DeclareMathOperator{\ind}{ind}
\DeclareMathOperator*{\colim}{colim}
\DeclareMathOperator{\Ch}{Ch}
\DeclareMathOperator{\Dol}{Dol}
\patchcmd{\subsubsection}{-.5em}{0em}{}{}
\begin{document}

\numberwithin{equation}{section}

\setcounter{tocdepth}{2}

\newcommand{\hs}{\mbox{\hspace{.4em}}}
\newcommand{\ds}{\displaystyle}
\newcommand{\bd}{\begin{displaymath}}
\newcommand{\ed}{\end{displaymath}}
\newcommand{\bcd}{\begin{CD}}
\newcommand{\ecd}{\end{CD}}

\newcommand{\minus}{\scalebox{0.5}[1.0]{$-$}}
\renewcommand{\ng}{\minus}

\newcommand{\shear}{{\mathbin{\mkern-6mu\fatslash}}}
\newcommand{\unshear}{\,{\mathbin{\mkern-6mu\fatbslash}}}

\newcommand{\indcoh}{\operatorname{QC^!}}
\newcommand{\on}{\operatorname}
\newcommand{\proj}{\operatorname{Proj}}
\newcommand{\bproj}{\underline{\operatorname{Proj}}}
\newcommand{\spec}{\operatorname{Spec}}
\newcommand{\Spec}{\operatorname{Spec}}
\newcommand{\bspec}{\underline{\operatorname{Spec}}}
\newcommand{\pline}{{\mathbf P} ^1}
\newcommand{\aline}{{\mathbf A} ^1}
\newcommand{\pplane}{{\mathbf P}^2}
\newcommand{\cone}{\operatorname{cone}}
\newcommand{\aplane}{{\mathbf A}^2}
\newcommand{\coker}{{\operatorname{coker}}}
\newcommand{\ldb}{[[}
\newcommand{\rdb}{]]}

\newcommand{\IndFin}{\operatorname{IndFin}}
\newcommand{\Sym}{\operatorname{Sym}^{\bullet}}
\newcommand{\Symp}{\operatorname{Sym}}
\newcommand{\Pic}{\bf{Pic}}
\newcommand{\Aut}{\operatorname{Aut}}
\newcommand{\codim}{\operatorname{codim}}
\newcommand{\PAut}{\operatorname{PAut}}

\newcommand{\KCoh}{\operatorname{KCoh}}

\newcommand{\Sec}{\operatorname{Sec}}
\newcommand{\Fqbar}{\overline{\mathbb{F}_q}}
\newcommand{\Fq}{{\mathbb{F}_q}}

\newcommand{\too}{\twoheadrightarrow}
\newcommand{\C}{{\mathbb C}}
\newcommand{\Z}{{\mathbb Z}}
\newcommand{\Q}{{\mathbb Q}}
\newcommand{\R}{{\mathbb R}}
\newcommand{\Cx}{{\mathbb C}^{\times}}
\newcommand{\Cbar}{\overline{\C}}
\newcommand{\Cxbar}{\overline{\Cx}}
\newcommand{\cA}{{\mathcal A}}
\newcommand{\fA}{{\mathfrak A}}
\newcommand{\cS}{{\mathcal S}}
\newcommand{\cV}{{\mathcal V}}
\newcommand{\cM}{{\mathcal M}}
\newcommand{\bA}{{\mathbb A}}

\newcommand{\cB}{{\mathcal B}}
\newcommand{\cC}{{\mathcal C}}
\newcommand{\cD}{{\mathcal D}}
\newcommand{\D}{{\mathcal D}}
\newcommand{\cs}{{\mathbf C} ^*}
\newcommand{\boldc}{{\mathbf C}}
\newcommand{\cE}{{\mathcal E}}
\newcommand{\cF}{{\mathcal F}}
\newcommand{\bF}{{\mathbb F}}
\newcommand{\cG}{{\mathcal G}}
\newcommand{\G}{{\mathbb G}}
\newcommand{\cH}{{\mathcal H}}
\newcommand{\bH}{{\mathbf H}}
\newcommand{\CI}{{\mathcal I}}
\newcommand{\cJ}{{\mathcal J}}
\newcommand{\cK}{{\mathcal K}}
\newcommand{\cL}{{\mathcal L}}
\newcommand{\baL}{{\overline{\mathcal L}}}
\newcommand{\M}{{\mathcal M}}
\newcommand{\Mf}{{\mathfrak M}}
\newcommand{\bM}{{\mathbb M}}
\newcommand{\bm}{{\mathbf m}}
\newcommand{\cN}{{\mathcal N}}
\newcommand{\theo}{\mathcal{?}{O}}
\newcommand{\cP}{{\mathcal P}}
\newcommand{\cR}{{\mathcal R}}
\newcommand{\Pp}{{\mathbb P}}
\newcommand{\boldp}{{\mathbf P}}
\newcommand{\boldq}{{\mathbf Q}}
\newcommand{\bbL}{{\mathbf L}}
\newcommand{\cQ}{{\mathcal Q}}
\newcommand{\cO}{{\mathcal O}}
\newcommand{\cT}{{\mathcal T}}
\newcommand{\Oo}{{\mathcal O}}
\newcommand{\cY}{{\mathcal Y}}
\newcommand{\OX}{{\Oo_X}}
\newcommand{\OY}{{\Oo_Y}}
\newcommand{\cZ}{{\mathcal Z}}
\newcommand{\fib}{\operatorname{fib}}
\newcommand{\DMod}{\mathcal{D}}
\newcommand{\cDMod}{\breve{\mathcal{D}}}
\newcommand{\rnD}{\cDMod}
\newcommand{\otY}{{\underset{\OY}{\ot}}}
\newcommand{\otX}{{\underset{\OX}{\ot}}}
\newcommand{\cU}{{\mathcal U}}\newcommand{\cX}{{\mathcal X}}
\newcommand{\cW}{{\mathcal W}}
\newcommand{\boldz}{{\mathbf Z}}
\newcommand{\Rees}{\operatorname{Rees}}
\newcommand{\IC}{\IndCoh}
\newcommand{\ssupp}{\operatorname{SS}}
\newcommand{\qgr}{\operatorname{q-gr}}
\newcommand{\gr}{\operatorname{gr}}
\newcommand{\rk}{\operatorname{rk}}
\newcommand{\Sh}{\operatorname{Sh}}
\newcommand{\Shv}{\operatorname{Sh}}
\newcommand{\SH}{{\underline{\operatorname{Sh}}}}
\newcommand{\End}{\operatorname{End}}
\newcommand{\uEnd}{\underline{\operatorname{End}}}
\newcommand{\Hom}{\operatorname{Hom}}
\newcommand{\uHom}{\underline{\operatorname{Hom}}}
\newcommand{\Sing}{\operatorname{Sing}}
\newcommand{\uHomY}{\uHom_{\OY}}
\newcommand{\uHomX}{\uHom_{\OX}}
\newcommand{\Ext}{\operatorname{Ext}}
\newcommand{\bExt}{\operatorname{\bf{Ext}}}
\newcommand{\Tor}{\operatorname{Tor}}

\newcommand*\brslash{\vcenter{\hbox{\includegraphics[height=3.6mm]{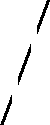}}}}

\newcommand{\inv}{^{-1}}
\newcommand{\airtilde}{\widetilde{\hspace{.5em}}}
\newcommand{\airhat}{\widehat{\hspace{.5em}}}
\newcommand{\nt}{^{\circ}}
\newcommand{\del}{\partial}

\newcommand{\supp}{\operatorname{supp}}
\newcommand{\GK}{\operatorname{GK-dim}}
\newcommand{\hd}{\operatorname{hd}}
\newcommand{\pt}{\operatorname{pt}}
\newcommand{\id}{\operatorname{id}}
\newcommand{\res}{\operatorname{res}}
\newcommand{\lrar}{\leadsto}
\newcommand{\im}{\operatorname{Im}}
\newcommand{\hh}{HH}
\newcommand{\hn}{HN}
\newcommand{\hc}{HC}
\newcommand{\hp}{HP}

\newcommand{\TF}{\operatorname{TF}}
\newcommand{\Bun}{\operatorname{Bun}}

\newcommand{\F}{\mathcal{F}}
\newcommand{\Ff}{\mathbb{F}}
\newcommand{\nthord}{^{(n)}}
\newcommand{\Gr}{{\mathfrak{Gr}}}

\newcommand{\BB}{\mathbb{B}}

\newcommand{\BGA}{B\mathbb{G}_{a}^{\mathrm{gr}}}

\newcommand{\Fr}{\operatorname{Fr}}
\newcommand{\GL}{\operatorname{GL}}
\newcommand{\Perv}{\operatorname{Perv}}
\newcommand{\gl}{\mathfrak{gl}}
\newcommand{\SL}{\operatorname{SL}}
\newcommand{\KPerf}{\operatorname{KPerf}}
\newcommand{\ff}{\footnote}
\newcommand{\ot}{\otimes}
\def\Ext{\operatorname {Ext}}
\def\Hom{\operatorname {Hom}}
\def\Ind{\operatorname {Ind}}
\newcommand{\MHM}{\operatorname{MHM}}

\def\bbZ{{\mathbb Z}}

\newcommand{\Irr}{\mathrm{Irr}}

\newcommand{\nc}{\newcommand}
\nc{\ol}{\overline} \nc{\cont}{\on{cont}} \nc{\rmod}{\on{mod}}
\nc{\Mtil}{\widetilde{M}} \nc{\wb}{\overline} \nc{\wt}{\widetilde}
\nc{\wh}{\widehat} \nc{\sm}{\setminus} \nc{\mc}{\mathcal}

\nc{\wc}{\widecheck}
\nc{\mbb}{\mathbb}  \nc{\K}{{\mc K}} \nc{\Kx}{{\mc K}^{\times}}
\nc{\Ox}{{\mc O}^{\times}} \nc{\unit}{{\bf \on{unit}}}
\nc{\boxt}{\boxtimes} \nc{\xarr}{\stackrel{\rightarrow}{x}}

\newcommand{\kz}{\widecheck}
\newcommand{\kzz}[1]{{#1}^!}

\nc{\Ga}{\G_a}
 \nc{\PGL}{{\on{PGL}}}
 \nc{\PU}{{\on{PU}}}

\nc{\h}{{\mathfrak h}} \nc{\kk}{{\mathfrak k}}
 \nc{\Gm}{\G_m}
\nc{\Gabar}{\wb{\G}_a} \nc{\Gmbar}{\wb{\G}_m} \nc{\Gv}{G^\vee}
\nc{\Tv}{T^\vee} \nc{\Bv}{B^\vee} 
\nc{\g}{{\mathfrak g}}
\nc{\gv}{{\mathfrak g}^\vee} \nc{\RGv}{\on{Rep}\Gv}
\nc{\RTv}{\on{Rep}T^\vee}
 \nc{\Flv}{{\mathcal B}^\vee}
 \nc{\TFlv}{T^*\Flv}
 \nc{\Fl}{{\mathfrak Fl}}
\nc{\RR}{{\mathcal R}} \nc{\Nv}{{\mathcal{N}}^\vee}
\nc{\St}{{\mathcal St}} \nc{\ST}{{\underline{\mathcal St}}}
\nc{\Hec}{{\bf{\mathcal H}}} \nc{\Hecblock}{{\bf{\mathcal
H_{\alpha,\beta}}}} \nc{\dualHec}{{\bf{\mathcal H^\vee}}}
\nc{\dualHecblock}{{\bf{\mathcal H^\vee_{\alpha,\beta}}}}
\newcommand{\ramBun}{{\bf{Bun}}}
\newcommand{\ramBuno}{\ramBun^{\circ}}

\nc{\Buntheta}{{\bf Bun}_{\theta}} \nc{\Bunthetao}{{\bf
Bun}_{\theta}^{\circ}} \nc{\BunGR}{{\bf Bun}_{G_\R}}
\nc{\BunGRo}{{\bf Bun}_{G_\R}^{\circ}}
\nc{\HC}{{\mathcal{HC}}}
\nc{\risom}{\stackrel{\sim}{\to}} \nc{\Hv}{{H^\vee}}
\nc{\bS}{{\mathbf S}}
\def\Rep{\operatorname {Rep}}
\def\Conn{\operatorname {Conn}}

\nc{\Vect}{{\operatorname{Vect}}}
\nc{\Hecke}{{\operatorname{Hecke}}}

\newcommand{\ZZ}{{Z_{\bullet}}}
\nc{\HZ}{{\mc H}\ZZ} \nc{\eps}{\epsilon}

\nc{\CN}{\mathcal N} \nc{\BA}{\mathbb A}
\nc{\XYX}{X\times_Y X}

\nc{\ul}{\underline}

\nc{\bn}{\mathbf n} \nc{\Sets}{{\on{Sets}}} \nc{\Top}{{\on{Top}}}

\nc{\Simp}{{\mathbf \Delta}} \nc{\Simpop}{{\mathbf\Delta^\circ}}

\nc{\Cyc}{{\mathbf \Lambda}} \nc{\Cycop}{{\mathbf\Lambda^\circ}}

\nc{\Mon}{{\mathbf \Lambda^{mon}}}
\nc{\Monop}{{(\mathbf\Lambda^{mon})\circ}}

\nc{\Aff}{{\on{Aff}}} \nc{\Sch}{{\on{Sch}}}
\newcommand{\bE}{\mathbb{E}}

\nc{\bul}{\bullet}
\nc{\module}{{\operatorname{-mod}}}

\nc{\dstack}{{\mathcal D}}

\nc{\BL}{{\mathbb L}}

\nc{\BD}{{\mathbb D}}

\nc{\BR}{{\mathbb R}}

\nc{\BT}{{\mathbb T}}

\nc{\SCA}{{\mc{SCA}}}
\nc{\DGA}{{\mc DGA}}

\nc{\DSt}{{DSt}}

\nc{\lotimes}{{\otimes}^{\mathbf L}}

\nc{\bs}{\backslash}

\nc{\Lhat}{\widehat{\mc L}}

\newcommand{\Coh}{{\on{Coh}}}

\nc{\QC}{\operatorname{QCoh}}
\nc\Perf{\on{Perf}}
\nc{\Cat}{{\on{Cat}}}
\nc{\dgCat}{{\on{dgCat}}}
\nc{\bLa}{{\mathbf \Lambda}}
\nc{\QCoh}{\QC}
\newcommand{\IndCoh}{\operatorname{IndCoh}}

\nc{\RHom}{\mathbf{R}\hspace{-0.15em}\on{Hom}}
\nc{\REnd}{\mathbf{R}\hspace{-0.15em}\on{End}}
\nc{\oo}{\infty}
\nc\Mod{\on{Mod}}
\nc\Comod{\on{Comod}}

\nc\fh{\mathfrak h}
\nc\al{\alpha}
\nc\la{\alpha}
\nc\BGB{B\bs G/B}
\nc\QCb{QC^\flat}
\nc\qc{\cQ}

\nc{\fg}{\mathfrak g}

\nc{\fn}{\mathfrak n}
\nc{\Map}{\on{Map}} \nc{\fX}{\mathfrak X}

\nc{\Tate}{\operatorname{Tate}}
\nc{\grTate}{\operatorname{grTate}}

\nc{\ch}{\check}
\nc{\fb}{\mathfrak b} \nc{\fu}{\mathfrak u} \nc{\st}{{st}}
\nc{\fU}{\mathfrak U}
\nc{\fZ}{\mathfrak Z}

\nc\fk{\mathfrak k} \nc\fp{\mathfrak p}

\newcommand{\opp}{{\mathrm{op}}}

\nc{\RP}{\mathbf{RP}} \nc{\rigid}{\text{rigid}}
\nc{\glob}{\text{glob}}

\nc{\cI}{\mathcal I}

\nc{\La}{\mathcal L}

\nc{\quot}{/\hspace{-.25em}/}

\nc\aff{\it{aff}}
\nc\BS{\mathbb S}

\newcommand{\Wdg}{\mathchoice{{\textstyle\bigwedge}}%
    {{\bigwedge}}%
    {{\textstyle\wedge}}%
    {{\scriptstyle\wedge}}}

\nc\git{/\hspace{-0.2em}/}
\nc{\fc}{\mathfrak c}
\nc\BC{\mathbb C}
\nc\BZ{\mathbb Z}

\nc{\Alg}{\on{Alg}}

\nc\stab{\text{\it st}}
\nc\Stab{\text{\it St}}

\nc\perf{\on{-perf}}

\nc\intHom{\mathcal{H}om}
\nc\shExt{\mathcal{E}xt}

\nc\gtil{\widetilde\fg}

\def\adjquot{/_{\hspace{-0.2em}ad}\hspace{0.1em}}

\nc\mon{\text{\it mon}}
\nc\bimon{\text{\it bimon}}
\nc\uG{{\underline{G}}}
\nc\uB{{\underline{B}}}
\nc\uN{{\underline{\cN}}}
\nc\uNtil{{\underline{\wt{\cN}}}}
\nc\ugtil{{\underline{\wt{\fg}}}}
\nc\uH{{\underline{H}}}
\nc\uX{{\ul{X}}}
\nc\uY{\ul{Y}}
\nc\upi{\ul{\pi}}
\nc{\uZ}{\ul{Z}}
\nc{\ucZ}{\ul{\cZ}}
\nc{\ucH}{\ul{\cH}}
\nc{\ucS}{\ul{\cS}}
\nc{\ahat}{{\wh{a}}}
\nc{\shat}{{\wh{s}}}
\nc{\DCoh}{{\rm DCoh}}

\newcommand{\Lie}{\operatorname{Lie}}

\newcommand{\cind}{\operatorname{c-Ind}}
\newcommand{\LL}{\cL}

\newcommand{\Tot}{\operatorname{Tot}}

\newcommand{\mf}{\mathfrak}

\newcommand{\todo}[1]{\textbf{\textcolor{red}{!!!TODO: #1}}}

\newcommand{\gwt}{\operatorname{wt}_{\mathbb{G}_m}}


\newcommand{\cat}{\mathbf}
\newcommand{\OO}{\cO}
\newcommand{\dmod}{\operatorname{-mod}}
\newcommand{\DMOD}{\textbf{-mod}}
\newcommand{\dperf}{\operatorname{-perf}}
\newcommand{\dcmod}{\operatorname{-cmod}}
\newcommand{\dtors}{\operatorname{-tors}}

\newcommand{\dcomod}{\operatorname{-comod}}
\newcommand{\dcoh}{\operatorname{-coh}}
\newcommand{\ICoh}{\operatorname{ICoh}}
\newcommand{\dR}{\operatorname{dR}}
\newcommand{\Hod}{\operatorname{Hod}}
\newcommand{\LLf}{\widehat{\cL}}
\newcommand{\Fin}{\operatorname{Fin}}
\newcommand{\GG}{\widetilde{G}}

\newcommand{\Fun}{\operatorname{Fun}}

\newcommand{\NN}{\widetilde{\mathcal{N}}}

\newcommand{\TT}[1]{\mathbb{T}_{#1}^{[{\ng}1]}}
\newcommand{\TTf}[1]{\mathbb{T}^{[{\ng}1]}#1}

\newcommand{\hTT}[1]{\widehat{\mathbb{T}}_{#1}^{[{\ng}1]}}
\newcommand{\hTTf}[1]{\widehat{\mathbb{T}}^{[{\ng}1]}#1}

\newcommand{\CT}[1]{\mathbb{T}^*_{#1}[2]}
\newcommand{\EE}[1]{\mathbb{E}_{#1}}
\newcommand{\EEd}[1]{\mathbb{E}_{#1}^*[1]}

\newcommand{\actson}{\circlearrowright}

\newcommand{\Haff}{\mathcal{H}}
\newcommand{\grH}{\bar{\mathcal{H}}}

\newcommand{\Hcat}{\mathbf{H}}

\newcommand{\ubQ}{\underline{\mathbb{Q}}}

\nc\Tr{{\mathbf{Tr}}} 
\newcommand{\tr}{\mathcal{T}r} 
\newcommand{\chern}{\operatorname{ch}}

\newcommand{\tilN}{\widetilde{\mathcal{N}}}

\newcommand{\un}{\operatorname{un}}

\newcommand{\TS}{\mathbf{T}}

\newcommand{\DLnil}{\mathcal{V}}

\newcommand{\bQl}{\overline{\mathbb{Q}}_\ell}

\newcommand{\dgMod}{\on{dgMod}}
\newcommand{\dgQCoh}{\on{dgQCoh}}

\newcommand{\Kz}{{\mathrm{K}}}

\mathchardef\md="2D
\mathchardef\mh="2D

\newcommand{\NH}{\mathbf{H}}

\newcommand{\bfI}{\mathbf{I}}
\newcommand{\bfG}{\mathbf{G}}

\newcommand{\LS}{\mathcal{LS}}
\newcommand{\APerf}{\operatorname{APerf}}

\newcommand{\bT}{\mathbb{T}}

\newcommand{\fil}{\mathrm{fil}}
\newcommand{\bB}{\mathbb{B}}
\newcommand{\bC}{\mathbb{C}}
\newcommand{\bD}{\mathbb{D}}
\newcommand{\bG}{\mathbb{G}}
\newcommand{\bI}{\mathbb{I}}
\newcommand{\bJ}{\mathbb{J}}
\newcommand{\bK}{\mathbb{K}}
\newcommand{\bL}{\mathbb{L}}
\newcommand{\bN}{\mathbb{N}}
\newcommand{\bO}{\mathbb{O}}
\newcommand{\bP}{\mathbb{P}}
\newcommand{\bQ}{\mathbb{Q}}
\newcommand{\bR}{\mathbb{R}}
\newcommand{\bU}{\mathbb{U}}
\newcommand{\bV}{\mathbb{V}}
\newcommand{\bW}{\mathbb{W}}
\newcommand{\bX}{\mathbb{X}}
\newcommand{\bY}{\mathbb{Y}}
\newcommand{\bZ}{\mathbb{Z}}

\newcommand{\wtD}{\wt{\cD}}
\newcommand{\wtO}{\wt{\Omega}^\bullet}
\newcommand{\wtOneg}{\wt{\Omega}^{-\bullet}}
\newcommand{\cdR}{d\cR^\bullet}

\mathchardef\md="2D
\mathchardef\mh="2D

\newcommand{\llb}{\llbracket}
\newcommand{\rrb}{\rrbracket}
\newcommand{\bbb}[1]{\llb #1 \rrb}

\newcommand{\rihgtarrow}{\rightarrow}

\newcommand{\hCoh}{\wh{\Coh}}
\newcommand{\tCoh}{\wt{\Coh}}

%
%


\newcommand{\tens}[1]{%
  \mathbin{\mathop{\otimes}\limits_{#1}}%
}

\newcommand{\utimes}[1]{%
  \mathbin{\mathop{\times}\limits_{#1}}%
}

\newcommand{\footremember}[2]{%
    \footnote{#2}
    \newcounter{#1}
    \setcounter{#1}{\value{footnote}}%
}
\newcommand{\footrecall}[1]{%
    \footnotemark[\value{#1}]%
} 

\title{Categorical cyclic homology and filtered $\cD$-modules on stacks:\\ Koszul duality}
\date{}
\author{Harrison Chen\footnote{Institute of Mathematics, Academia Sinica, Taipei, Taiwan}}

\maketitle


\begin{abstract}
Motivated by applications to the categorical and geometric local Langlands correspondences, we establish an equivalence between the category of filtered $\mathcal{D}$-modules on a smooth stack $X$ and the category of $S^1$-equivariant ind-coherent sheaves on its formal loop space $\widehat{\mathcal{L}} X$, exchanging compact $\mathcal{D}$-modules with coherent sheaves, and coherent $\mathcal{D}$-modules with continuous ind-coherent sheaves.  The equivalence yields a sheaf of categories over $\mathbb{A}^1/\mathbb{G}_m$ whose special fiber is a category of coherent sheaves on stacks appearing in categorical traces, and whose generic fiber is a category of equivariant constructible sheaves.
\end{abstract}

\tableofcontents

\section{Introduction}

In this paper we are interested in a Koszul equivalence of categories between modules for the de Rham complex (the ``$\Omega$-side'') and modules for the algebra of differential operators (the ``$\cD$-side'') on a smooth scheme $X$, or rather a certain deformation of this equivalence, which in a sense interpolates between algebra and topology.
\begin{center}
\begin{tabular}{|c|c|c|} \hline
generic fiber $\bG_m/\bG_m$ & interpolation over $\mathbb{A}^1/\bG_m$ & special fiber $\{0\}/\bG_m$ \\ 
\hspace{10ex}``topology''\hspace{10ex} & $\begin{tikzcd}[column sep=20ex] \arrow[r] & \arrow[l] \end{tikzcd}$ & ``algebra'' \\ 
\Xhline{3\arrayrulewidth}
Tate localization   & $B\bG_a \rtimes \bG_m$-coherent sheaves on $\bT_X[\ng1]$ & $\bG_m$-coherent sheaves on $\bT_X[\ng1]$ \\
de Rham complex $\Omega^{\bullet}_{X, d}$ & ``mixed'' de Rham complex $\dddot{\Omega}_X^{-\bullet}$ & differential forms $\Omega_X^{-\bullet}$ \\ \hline
$\cD$-modules & filtered $\cD$-modules & $\bG_m$-coherent sheaves on $\bT^*_X$ \\
sheaf of differential op. $\cD_X$ & Rees alg. $\wt{\cD}_X = \Rees(\cD_X, F^{\mathrm{ord}})$ & classical limit $\cO_{\mathbb{T}^*_X}$  \\
\Xhline{3\arrayrulewidth}
$\cD-\Omega$ duality  & ``filtered'' Koszul duality & linear Koszul duality \\
\cite{BD, rybakov, toly indcoh} & \cite{kapranov, coderived, loops and conns, foliations 1, foliations 2}  &  \cite{MR, MR IM, MR2, AO} \\ \hline
\end{tabular}
\end{center}
Our goal is to describe a version of this duality where $X$ is not a smooth scheme, but a smooth stack, and to explore new features that appear in that setting.  Our results are motivated by questions in the categorical local Langlands correspondence of Fargues--Scholze and Zhu \cite{FS, xinwen} and the relationship between geometric realizations of affine and graded Hecke algebras inside categories of coherent and constructible sheaves respectively \cite{ihes}; see Section \ref{local lang} for a discussion.  Our results are also a basic ingredient in developing a support theory for $\cD$-modules which serves as an automorphic counterpart to the singular support theory of coherent sheaves on the spectral side of Langlands duality \cite{AG}; see Section \ref{applications unsafe} for a discussion.

\subsubsection{} We now review the equivalences described in the table above, in the case where $X$ is a scheme.  In particular, we provide an interpretation of the categories appearing on the $\Omega$-side in terms of categorical traces, i.e. categorifications of Hochschild, negative, and periodic cyclic homologies \cite{BN:NT}.  The familiar reader may wish to skip directly to Section \ref{kd stack intro}.

\medskip

We first review constructions in the decategorified setting.  The usual Hochschild homology of the category $\QCoh(X)$ has a natural identification as global functions on the (derived) loop space $\cO(\cL X)$ \cite{TV S1, TV chern}.  When $X$ is a smooth scheme, the Hochschild--Kostant--Rosenberg isomorphism identifies this with the global sections of negatively graded differential forms $\Gamma(X, \Omega^{-\bullet}_X)$.  Hochschild homology has a cyclic enhancement in \emph{(negative) cyclic homology}, obtained by taking $S^1$-invariants for the natural $S^1$-action encoded by the Connes $B$-operator on Hochschild homology, which for a smooth scheme $X$ corresponds under the HKR isomorphism with the de Rham differential acting as a ``mixed'' degree $-1$ operator on $\Omega^{-\bullet}_X$.  The negative cyclic homology is naturally a module over $H^\bullet(BS^1; k)$-module; by inverting the degree 2 Chern class $u \in H^2(BS^1; k)$ one obtains the \emph{periodic cyclic homology}, which for a smooth scheme $X$ is isomorphic to a 2-periodicization of the de Rham cohomology of $X$.

\medskip

Likewise, the categorical Hochschild homology or trace of $\QCoh(X)$ may be identified with the category $\QCoh(\cL X)$ of quasi-coherent sheaves\footnote{We are interested in studying convolution categories over $X$ in general, which means studying the renormalization $\IndCoh(X)$ \cite{CD}.  To simplify the discussion we will ignore this in the introduction.} on the loop space \cite{BFN}, and for a smooth scheme $X$ there is an \emph{exponential map} $\exp: \bT_X[\ng1] = \Spec_X \Sym_X \Omega^{-\bullet}_X \rightarrow \cL X$ identifying the loop space with its linearization the \emph{odd tangent bundle}, i.e. the normal bundle to constant loops.  Categorical trace methods have been the subject of great interest lately \cite{GKRV, xinwen, FS, AGKRRV, BCHN} and have been used to apply results from geometric Langlands toward arithmetic Langlands; however, relatively little has been written about its cyclic enhancement, which is necessary for applications discussed in \cite{ihes} and Section \ref{local lang}.

\medskip

That is, it is of interest to study the \emph{categorical cyclic homology} $\QCoh(\cL X)^{S^1}$.  When $X$ is a smooth scheme, the HKR exponential map identifies the $S^1$-action on $\cL X$ with a $B\bG_a$-action on $\bT_X[\ng1]$, thus we may equivalently study the category $\QCoh(\bT_X[\ng1])^{B\bG_a}$ of $B\bG_a$-equivariant sheaves on the odd tangent bundle.  Explicitly, this category may be identified with modules for a certain \emph{mixed de Rham complex} $\dddot\Omega^{-\bullet}_X$, obtained by adjoining a noncommutative degree -1 operator $\delta$ such that $\delta^2 = 0$ and $[\delta, \omega] = d\omega$ \cite{loops and conns}.   The category of $S^1$-representations is equivalent to the category of $H^\bullet(BS^1; k)$-modules,\footnote{This is not strictly true.  One needs to renormalize, and we suppress this and related details in the introduction; see an extended discussion in Section \ref{sec bga}.} thus the categorical cyclic homology $\QCoh(\cL X)^{S^1}$ is linear over $H^\bullet(BS^1; k)$.  By inverting $u$ we obtain the \emph{categorical periodic cyclic homology}, which in the case of a smooth scheme is a 2-periodic version of Beilinson and Drinfeld's $\Omega$-modules \cite{BD}, i.e. a certain coderived category of modules for the de Rham complex $\Omega^{\bullet}_{X, d}$ \cite{coderived}.

\subsubsection{} That is, categorical cyclic homology lives over the ``2-shifted affine line'' $H^\bullet(BS^1; k) \simeq k[u]$ where the special fiber is the categorical Hochschild homology and the generic fiber is the categorical periodic cyclic homology.  To identify the 2-shifted line with the usual affine line $\bA^1$ we introduce the $\bG_m$-equivariance coming from linearization, and apply a Tate shearing commonly present in Koszul duality patterns, which transforms the weight-degree $(k, 0)$-line to the $(k, 2k)$-line \cite{BGS}, in particular identifying $u \in H^\bullet(BS^1; k)$ with a degree 0 variable $\hbar$, which we now interpret as a Rees parameter.  Now in degree 0, objects living over the graded affine line $\bA^1/\bG_m$ may be interpreted as filtered versions of objects living over the generic fiber.

\medskip

On the Koszul dual $\cD$-side, over this generic fiber the (unsheared) categorical periodic cyclic homology identifies with the category of modules for the sheaf of differential operators $\cD_X$.  It's extension to the graded affine line, i.e. the categorical negative cyclic homology, identifies with modules for its Rees algebra $\wt{\cD}_X$ for the order filtration, i.e. the category of filtered $\cD$-modules.  Then, its special fiber, i.e. the categorical Hochschild homology, is identified with the classical limit, sheaves on the cotangent space $\bT^*_X$.  In the case of schemes, these categories may all be described as certain categories of modules for sheaves of algebras, and the Koszul duality equivalences are given by tensoring with certain \emph{Koszul complexes} (or their duals) \cite{kapranov, coderived}.

\subsubsection{}\label{kd stack intro} In the case where $X$ is a stack, the story in the above table takes on a different and richer flavor.  The exponential map no longer identifies the entire loop space with the entire odd tangent bundle, but only near constant loops $X \subset \cL X$ and the zero section $X \subset \bT_X[\ng1]$, e.g. the exponential map between a Lie algebra and its group is not an isomorphism except near the identity.  In particular, linearizations are only able to see the part of categorical (periodic) cyclic and Hochschild homology near the constant loops, i.e. the \emph{formal loop space}, which is the subject of this paper.  In \cite{Ch}, the decategorified formal periodic cyclic homology was computed in terms of the de Rham complex of the stack $X$, and in \cite{ihes} we describe an equivariant localization pattern for studying the derived loop space away from constant loops in using methods from \cite{Ch}.

\medskip

Even near constant loops it is no longer true that the categories involved may be described as modules for sheaves of algebras on $X$.  For example, the category of $\cD$-modules on a global quotient stack $X/G$ is the category of strongly $G$-equivariant $\cD$-modules on $X$ where strong equivariance is imposed homotopically; the formulation of strong equivariance involves differentiating the $G$-equivariant structure on a sheaf.  This phenomenon is visible on the $\Omega$-side as well, as the $S^1$-action can no longer by described purely in terms of an operator on the sheaf $\Omega^{-\bullet}_X$.  For example, the $S^1$-action on $\cL(BG) = G/G$ is given by the universal automorphism which acts on an adjoint-equivariant sheaf on $G$, fiber-by-fiber at $g \in G$ by $g_*$-equivariance.  Thus for a stack, Koszul duality equivalence is necessarily more involved; we refer the reader to Section \ref{dmod BG exmp} for an example of this in the simplest case.

\medskip

Moreover, when $X$ is a stack, the category $\cD(X)$ has two natural small categories attached to it, introduced by Drinfeld and Gaitsgory \cite{QCA}.  The compact objects consist of the \emph{safe} $\cD$-modules $\cD_s(X)$, while one may also consider the larger category of \emph{coherent} $\cD$-modules $\cD_c(X)$.  This difference is fundamental, as the constant $\cD$-module is often coherent but not safe.  On the $\Omega$-side we have two different small categories  as well: the usual category $\Coh(\wh{\bT}_X[\ng1])$ of \emph{coherent sheaves on the formal completion} (i.e. coherent sheaves supported along the zero section), and the larger category of \emph{continuous ind-coherent sheaves} on the formal completion\footnote{We briefly comment on this nomenclature in the case of $R = k[x]$ with respect to the $(x)$-adic topology.  Roughly, there are two kinds of modules which have unique topologies such that the $R$-action is continuous with respect to the adic topology.  One is pro-finite (thus complete), i.e. the coinvariants or $*$-restriction to $x=0$ is coherent, while the other is ind-finite (thus locally discrete), i.e. the invariants or $!$-restriction is coherent.  See \cite[Rmk. 2.2.8]{CD}\cite[App. B]{EDH} for further discussion.} $\wh{\Coh}(\wh{\bT}_X[\ng1])$ introduced in \cite{CD}.  This Koszul duality gives rise to an invariant of $\cD$-modules on stacks measuring this difference, which we discuss in Section \ref{applications unsafe}.

\subsection{Statement of results}

We now state our results; fix $k$ to be a field of characteristic zero throughout.  Related Koszul duality results in the literature (see Section \ref{lit rev}) are currently formulated in a variety of different languages; we choose to formulate ours in the language of derived algebraic geometry, which we review in Section \ref{notation}.

\subsubsection{} We first discuss the $\Omega$-side, and how the stacks there arise naturally.  For $X$ a reasonable stack, the (derived) \emph{loop space} is the derived mapping stack
$$\cL X := \Map(S^1, X) = \Map(\asterisk \coprod_{\asterisk \; \asterisk} \asterisk, X) = X \utimes{X \times X} X$$
i.e. the derived self-intersection of the diagonal, or the derived inertia stack, which has $k$-points given by pairs $(x, \gamma)$ where $x \in X(k)$ and $\gamma \in \Aut(x)$ is an automorphism of the point $x$, modulo conjugation by $\Aut(x)$.  By construction, the derived loop space has an $S^1$-action.  We may linearize the derived loop space and its circle action as follows.  Consider the closed substack $X \subset \cL X$ of constant loops, and consider its normal bundle, i.e. the \emph{odd tangent bundle}
$$\bT_X[\ng1] := \Spec_X \Sym_X \Omega^1_X[1]$$
where $\Omega^1_X$ denotes the cotangent complex of $X$.  We define the \emph{formal odd tangent bundle} $\wh{\bT}_X[\ng1]$ to be the completion along the zero section, which is acted on by the \emph{affinization} $B\bG_a$ of the circle, i.e. we have identifications of dg algebras
$$\cO(S^1) = C^\bullet(S^1; k) \simeq \Ext^\bullet_{\Rep(\bG_a)}(k, k) = \cO(B\bG_a).$$
Furthermore, being linearizations, both $\wh\bT_X[\ng 1]$ and $B\bG_a$ admit compatible scaling $\bG_m$-actions, and we may extend the $B\bG_a$-action on $\wh\bT_X[\ng1]$ to a $B\bG_a \rtimes \bG_m$-action.

\subsubsection{}\label{intro ren}  We now introduce the category of \emph{continuous ind-coherent sheaves} on a formal completion $\wh{X}_Z$, which is a certain enlargement of the usual category of coherent sheaves $\Coh(\wh{X}_Z)$.   Let $\wh{X}_Z$ be a formal completion along a closed substack; we adopt the definitions from \cite{GR} so that the category $\Coh(\wh{X}_Z)$ is identified with the category $\Coh_Z(X)$ of coherent sheaves on $X$ supported along $Z$.  This category, in particular, excludes the structure sheaf $\cO_{\wh{X}_Z}$ and the dualizing sheaf $\omega_{\wh{X}_Z}$.  In \cite[\textsection 2]{CD} we introduced the enlargement $\hCoh(\wh{X}_Z) \subset \IndCoh(\wh{X}_Z)$, which in particular contains the dualizing sheaf\footnote{There is a Grothendieck dual version which contains the structure sheaf.} (see Definition \ref{def hcoh} for a precise definition).  We now enumerate its main properties from Proposition \ref{graded functoriality thm} with special attention to the case of the formal odd tangent bundle $\wh{\bT}_X[\ng1]$ for a smooth Artin stack $X$.
\begin{enumerate}
\item It contains the category of coherent sheaves $\Coh(\wh{X}_Z) \subset \hCoh(\wh{X}_Z)$, which is a proper subcategory if and only if $|Z| \not= |X|$.  In particular, if $X$ is a smooth scheme, then $\hCoh(\wh{\bT}_X[\ng1]) = \Coh(\wh{\bT}_X[\ng1])$.
\item It contains $!$-restrictions of coherent sheaves on the ambient stack $X$ to the formal neighborhood.  That is, for $\wh{\iota}: \wh{X}_Z \hookrightarrow X$ we have a functor $\wh{\iota}^!: \Coh(X) \rightarrow \hCoh(\wh{X}_Z)$.  In particular, $!$-restriction along the exponential map defines a functor $\exp^!: \Coh(\cL X) \rightarrow \hCoh(\wh{\bT}_X[\ng1])$.
\item Membership in the category may be tested by $!$-restriction along a quasi-smooth proper map which is surjective on geometric points.  That is, if $z: Z \hookrightarrow \wh{X}_Z$ is quasi-smooth, we have $z^! \cF \in \Coh(Z)$ if and only if $\cF \in \hCoh(\wh{X}_Z)$. In particular, $\hCoh(\wh{\bT}_X[\ng1])$ may be characterized as the full subcategory of $\IndCoh(\wh{\bT}_X[\ng1])$ consisting of objects whose $!$-restriction to $\wh{\bT}_U[\ng1] = \bT_U[\ng1]$ is coherent for an (or any) atlas $p: U \rightarrow X$.
\end{enumerate}
Part of our study involves Koszul dual descriptions of the category $\hCoh(\wh{X}_Z)$, for example in Theorem \ref{kd graded ren}.  Similar descriptions were established by Raskin in \cite[Thm. 3.2]{raskin}, which treats the case of the completion at a point inside a quasi-smooth scheme.

\subsubsection{} The main theorem is as follows, and appears in the body as Theorem \ref{kd kperf}.  The symbol $\shear$, borrowed 
 from \cite{BG}, denotes the Tate shearing, while $\unshear$ is the inverse unshearing; see Definition \ref{tate shear} for a discussion.  We require a certain \emph{compactly renormalized invariants operation} $(-)^{\omega B\bG_a}$ for groups which are not affine algebraic, and the \emph{graded Tate construction}  $(-)^{\grTate}$ inverting the degree 2 Chern class $u \in \cO(B^2\bG_a) \simeq H^\bullet(BS^1; k)$; see Definition \ref{ren G inv} and Definition \ref{def tate}.  We direct the reader to Section \ref{dmod BG exmp} for an explicitly worked example.
\begin{introthm}[Koszul duality for stacks]\label{intro thm}
Let $X$ be a smooth QCA stack.  We have compatible equivalences:
$$\begin{tikzcd}
\IndCoh(\wh{\bT}_X[\ng1])^{\grTate} \arrow[d, "\simeq"', "\unshear \circ \kappa"] & \IndCoh(\wh{\bT}_X[\ng1])^{\omega B\bG_a \rtimes \bG_m} \arrow[r] \arrow[d, "\simeq"', "\unshear \circ \kappa"] \arrow[l] & \IndCoh(\wh{\bT}_X[\ng1])^{\G_m} \arrow[d, "\unshear \circ \kappa", "\simeq"'] \\
\cD(X) & F\cD(X)  \arrow[r, "\gr"'] \arrow[l, "\un"] & \QCoh(\mathbb{T}_{X}^*)^{\bG_m} 
\end{tikzcd}$$
identifying the full subcategory of coherent (filtered) $\cD$-modules with the full subcategory of continuous ind-coherent sheaves
$$\begin{tikzcd}
\wh{\Coh}(\bT_X[\ng1])^{\grTate} \arrow[d, "\simeq"'] & \hCoh(\bT_X[\ng1])^{ B\bG_a \rtimes \bG_m} \arrow[r] \arrow[d, "\simeq"'] \arrow[l] & \hCoh(\bT_X[\ng1])^{\G_m} \arrow[d, "\simeq"'] \\
\cD_c(X) & F\cD_c(X)  \arrow[r, "\gr"'] \arrow[l, "\un"] & \Coh(\mathbb{T}_{X}^*)^{\bG_m} 
\end{tikzcd}$$
functorially with respect to schematic (i.e. representable by schemes) pullback, induction, restriction, and exchanging the notions of singular support. When $f$ is proper, the equivalences are compatible with pushforward.
\end{introthm}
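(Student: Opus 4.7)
I would establish the middle equivalence
$F\cD(X) \simeq \IndCoh(\wh{\bT}_X[\ng1])^{\omega B\bG_a \rtimes \bG_m}$
first, and then deduce the two outer columns by base change along the inclusions $\{0\}/\bG_m \hookrightarrow \bA^1/\bG_m$ and $\bG_m/\bG_m \hookrightarrow \bA^1/\bG_m$. Under the interpretation of filtered objects as sheaves over the Rees line, the former corresponds to $\gr$ on the $\cD$-side and to killing the $B\bG_a$-action on the $\Omega$-side, while the latter corresponds to $\un$ on the $\cD$-side and to the graded Tate construction $(-)^{\grTate}$ on the $\Omega$-side. Once the middle equivalence is produced as a morphism of sheaves of categories over $\bA^1/\bG_m$, the two side triangles commute automatically.

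The scheme-level input (Step 1) is classical. For a smooth scheme $U$ the exponential map identifies $\wh{\bT}_U[\ng 1]$ with the formal loop space and exchanges the $S^1$-action with a $B\bG_a$-action, and by property (1) of Section \ref{intro ren} one has $\hCoh(\wh{\bT}_U[\ng 1]) = \Coh(\bT_U[\ng 1])$. The filtered Koszul equivalence $F\cD(U) \simeq \Coh(\bT_U[\ng1])^{B\bG_a \rtimes \bG_m}$, together with its ind-completion to the big category, is assembled from \cite{coderived, loops and conns, kapranov}; concretely it is implemented by tensoring with the Rees-deformed Koszul complex mediating between $\wt{\cD}_U = \Rees(\cD_U, F^{\mathrm{ord}})$ and its Koszul dual $\wt{\Omega}_U^{-\bullet}$. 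In Step 2 I would verify that this equivalence is natural under smooth pullback $f \colon U \to U'$ between smooth schemes, intertwining $f^*$ on filtered $\cD$-modules with $f^*$ on $B\bG_a \rtimes \bG_m$-equivariant ind-coherent sheaves. This reduces to naturality of the exponential identification $\wh{\bT}_U[\ng1] \simeq U \times_{U'} \wh{\bT}_{U'}[\ng1]$ together with compatibility of the Koszul complex and of the order filtration on $\cD$ with smooth base change, all of which are standard.

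For the descent step (Step 3), choose a smooth scheme atlas $p \colon U \to X$ with \v{C}ech nerve $U^{\bullet/X}$. Smooth descent for $\cD$-modules presents $F\cD(X) = \lim F\cD(U^{\bullet/X})$, while the formalism of \cite{CD} and of this paper presents $\IndCoh(\wh{\bT}_X[\ng1])^{\omega B\bG_a \rtimes \bG_m}$ as a matching limit over the same simplicial diagram, using that $\wh{\bT}_{U^{[n]}}[\ng1] = \bT_{U^{[n]}}[\ng1]$ for each smooth term. The functoriality of Step 2 upgrades the scheme-level equivalence to a morphism of simplicial diagrams that is a termwise equivalence, and passing to the limit yields the equivalence for $X$. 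The restriction to small categories then follows by combining the characterization of $\hCoh(\wh{\bT}_X[\ng 1])$ by coherence of $!$-restriction to an atlas (property (3) of Section \ref{intro ren}) with the parallel definition of $\cD_c(X)$ via smooth pullback; both conditions are intertwined by the equivalence. The compatibility with singular support is finally read off the Koszul functor, which identifies set-theoretic coherent support on the $\Omega$-side with the characteristic-variety description on the $\cD$-side.

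The main obstacle is Step 3. Because the equivariance on both sides is homotopical rather than strict --- as emphasized in the introduction in the example $X = BG$ --- descent cannot be reduced to descent for sheaves of algebras on $X$, and the real technical work lies in assembling the locally defined Koszul functors into a genuinely $\infty$-categorical equivalence of sheaves of categories over $\bA^1/\bG_m$ that preserves both the enlargement $\hCoh \subset \IndCoh$ and the $B\bG_a \rtimes \bG_m$-equivariance. The scheme-theoretic and decategorified inputs are classical; the novel content is precisely this simplicial coherence together with the control of the enlarged category $\hCoh$ along the atlas.
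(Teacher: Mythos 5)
Your proposal follows essentially the same route as the paper: the scheme-level filtered Koszul equivalence implemented by the Rees-deformed Koszul complex (Theorem \ref{kd schemes}), its compatibility with smooth pullback, descent along the \v{C}ech nerve of an atlas to obtain the middle column for stacks (Theorem \ref{kd kperf}), identification of the small subcategories via the atlas characterizations of $\hCoh$ and $F\cD_c$, and passage to the special and generic fibers of $\bA^1/\bG_m$ for the outer columns. You also correctly locate the genuine technical content in the compact renormalization of the $B\bG_a$-invariants and the $\infty$-categorical coherence of the descent, which is exactly where the paper expends its effort.
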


We note that there is no serious obstruction to functoriality with respect to pushforward along non-proper maps $f$; however, such functors will not preserve the small category of coherent sheaves, thus one must take care to define the functor properly on renormalizations.  We expect this can be done using the approach of \cite{toly indcoh}.  We also expect that the equivalences should hold more generally in the non-smooth setting and can be extended as an equivalence of functors out of a category of correspondences \cite{GR, german}.

\subsubsection{Application to categorical local Langlands.}\label{local lang}

We discuss the application to the categorical local Langlands correspondence \cite{FS, xinwen} which motivated this paper, discussed at length in \cite{ihes}. In \cite{lusztig unipotent}, Lusztig established an equivalence between unipotent representations of a reductive group over a non-Archimedian local field and the module theory for certain \emph{affine Hecke algebras} with unequal parameters.  Lusztig's fruitful strategy \cite{cuspidal1, gradedAHA, cuspidal2} for classifying the simple modules of these affine Hecke algebras was to reduce them to certain completions of twisted \emph{graded Hecke algebras}, obtained by taking $\mf{m}$-adic completions for maximal ideals $\mf{m}$ of the center.  In turn, these completed graded Hecke algebras have, up to a Tate shearing, natural \emph{geometric} realizations as the Borel-Moore homology on certain ``fixed-point'' versions of the Steinberg stack, i.e. the self-$\Ext$s of ``fixed-point'' versions of the equivariant Springer sheaf living on the corresponding Vogan variety.  In other words, the module theory of graded Hecke algebras has a geometric categorical realization as categories of \emph{constructible sheaves}, or by Riemann-Hilbert, \emph{$\cD$-modules}; this geometric realization was studied in recent work of Solleveld \cite{sol1, sol2}.

\medskip

Recently, the affine Hecke algebra itself has also seen a parallel realization as the Hochschild homology of the Steinberg stack \cite{BCHN}, and by the formalism of iterated traces~\cite{BN:NT, CP, GKRV} Hochschild homology has a realization as the self-$\Ext$s of a certain \emph{coherent Springer sheaf} which lives in the \emph{categorical} Hochschild homology of the equivariant nilpotent cone, i.e. the category $\Coh(\cL(\wh{\cN}/G^\vee\!\times\bG_m))$ of coherent sheaves on the derived loop space of the equivariant nilpotent cone, i.e. a certain stack of \emph{unipotent Langlands parameters}.  In other words, the affine Hecke algebra can be directly realized geometrically via \emph{coherent sheaves on derived loop spaces}.

\medskip

It is thus a natural question whether one can realize the decategorified relationship between affine and graded Hecke algebras geometrically and categorically.  More generally, one can ask whether there is a general framework for passing between coherent sheaves on loop spaces and constructible sheaves.  The mechanism is precisely the Koszul duality we develop in this paper, though one must pass through an interpolation of the two, and a Tate shearing defined by a choice of graded lift: $\cD$-modules become filtered $\cD$-modules, and coherent sheaves on formal loop spaces are equipped with $B\bG_a \rtimes \bG_m$-equivariant structures.

\medskip

In a forthcoming work with Ben-Zvi, Helm and Nadler, we prove that when $G = GL_n$, the periodic localization of the category of $S^1$-equivariant sheaves on the stack of unipotent Langlands parameters is generated by the coherent Springer sheaf.  The proof reduces the question to the analogous claim in for constructible Springer sheaves one parameter at a time \cite{ihes}, with the Koszul duality result in this paper and the equivariant localization result in \cite{Ch} providing the essential technical tools for this reduction.

\subsubsection{An automorphic counterpart to singular support.}\label{applications unsafe}  

In \cite{AG}, Arinkin and Gaitsgory introduced the notion of \emph{singular support of coherent sheaves}, which measures the failure of a given coherent sheaf to be perfect, and is defined in terms of the ordinary support of the corresponding sheaf under a (different) Koszul duality.  In particular, the presence of singular support phenomena is related to the singularities of a given scheme $X$, i.e. is measured by the $(-1)$st cohomology of its cotangent complex $\cH^{-1}(\bL_X)$.  This notion was necessary to formulate the correct category on the spectral side of geometric Langlands in \emph{op. cit.} by imposition of a \emph{nilpotent singular support} condition.  This nilpotent singular support condition also appears in spectral affine Hecke categories \cite{roma hecke, CD}, and more generally a 2-categorical version is expected to play a role in the local geometric Langlands correspondence.  A natural question to ask is whether there is a corresponding support theory on the automorphic side, i.e. on categories of $\cD$-modules.

\medskip

When $X$ is a stack, there are two small categories one can associate to the category $\cD(X)$ of $\cD$-modules on $X$: the smaller category $\cD_s(X)$ of \emph{safe} $\cD$-modules (the compact objects) or the larger category $\cD_c(X)$ of \emph{coherent} $\cD$-modules \cite{QCA}.  In a ongoing project with Dhillon, we study analogous intermediate categories between $\cD_s(X)$ and $\cD_c(X)$ via a notion of support which we call \emph{unsafe support}, i.e. an invariant measuring the failure of a coherent $\cD$-module to be safe, which is defined via the Koszul duality in this paper.  Where singular support of coherent sheaves arises in the presence of non-smoothness, unsafe support arises in the presence of stackiness in semisimple directions, i.e. up to nilpotent directions by the $(-1)$st cohomology of its tangent complex $\cH^{-1}(\bL_X^\vee)$.  This notion of unsafe support on (non-filtered) $\cD$-modules cannot measure in unipotent directions, and the smallest allowable unsafe support is given by the nilpotent cone (see Example \ref{BGa exmp}); in particular, safety is defined by a \emph{nilpotent unsafe support} condition, matching the nilpotent singular support condition on the spectral side.

\subsubsection{Literature review.}\label{lit rev}

The earliest appearance (to our knowledge) of a ``filtered'' Koszul duality between the mixed de Rham complex and Rees algebra of differential operators is a paper of Kapranov \cite{kapranov}, which studies the case of smooth schemes.  One difference in their approach is that the notion of quasi-isomorphism on the $\Omega$-side is by passing to analytifications, as is typical when defining the de Rham functor for $\cD$-modules. The same idea was exploited by Beilinson and Drinfeld \cite{BD} to establish smooth descent for (non-filtered) $\cD$-modules, which they use to develop a theory of $\cD$-modules on stacks, using an explicit description of the dg totalization of dg derived categories with t-exact transition functors.  To avoid the analytifications employed in \cite{kapranov}, the authors simply transport the desired notion of quasi-isomorphism from the $\cD$-side to the $\Omega$-side, which were later characterized more intrinsically by Positselski in \cite{coderived} in terms of coderived categories.  Furthermore, both versions of Koszul duality are reinterpreted in this framework in \emph{op. cit.}, and likewise functoriality in \cite{rybakov}.

\medskip

On the other hand, the Koszul duality at the special fiber is a special case of the \emph{linear Koszul duality} of Mirkovi\'{c} and Riche \cite{MR, MR IM, MR2}.  In this series the authors prove functoriality results for linear Koszul duality for quotient stacks, often passing through Grothendieck duality to do so.  In the language of \cite{coderived, rnkd} this means taking the contraderived category rather than the coderived category.  We will prefer our equivalences to be covariant rather than contravariant, since we sometimes need to ind-complete and work with large categories.

\medskip

In the spiritual predecessor to this paper, Ben-Zvi and Nadler \cite{loops and conns} give an algebreo-geometric interpretation of the $\Omega$-side of Koszul duality, namely as a category of $S^1$-equivariant (or more precisely, $B\bG_a \rtimes \bG_m$-equivariant) quasi-coherent sheaves on derived loop spaces of smooth schemes, and prove a filtered Koszul duality in that setting; the case of a quotient stack in the non-$B\bG_a$-equivariant setting was also studied more recently in \cite{AO}. Later, Preygel established the Koszul duality equivalence over the generic point in the case of possibly singular schemes \cite{toly indcoh}, while in recent work To\"{e}n and Vezzosi developed a general framework for $B\bG_a$-equivariant sheaves, Tate constructions and Koszul duality equivalences on singular schemes \cite{foliations 1, foliations 2}.  A relationship between loop spaces and the Hodge stack also appeared in \cite{tasos hodge}.

\medskip

Finally, we note that the recent book by Positselski \cite{rnkd} provides a robust framework for understanding many ideas in this work in the setting of schemes. For a discussion of translating the language in \emph{op. cit.} to the language of derived algebraic geometry in \cite{GR}, see Section H.3 of \cite{AG}.  We also note the close connection between $S^1$-equivariant dg categories and curved dg categories discussed in \cite{MF}.

\subsection{Background and set-up}\label{notation}\label{sec infcat}

We review the language of $\infty$-categories and derived algebraic geometry (as developed in \cite{topos, HA, GR}) which we use to formulate and prove the results in this paper.  Throughout the paper we let $k$ denote an algebraically closed field of characteristic 0.  Our grading conventions will be cohomological, i.e. $[1]$ indicates a cohomological shift in the negative direction.

\subsubsection{\texorpdfstring{$\infty$}{Infinity}-categories.} We will sometimes require distinguishing between large and small categories in the usual way, which we recall briefly.  Following \cite[\textsection 1.2.15]{topos}, throughout the discussion, we fix a Grothendieck universe; we say a set is \emph{small} if it is an element of the Grothendieck universe, and otherwise we say it is \emph{large}.  A category is \emph{small} if its set of objects is small and (total) set of morphisms is small, and \emph{large} if otherwise (i.e. it is small in some other, larger, Grothendieck universe).  Throughout this paper we fix a Grothendieck universe from which we take small sets.

\medskip

By \emph{$\infty$-category}, we will mean a quasi-category (i.e. weak Kan complex) unless otherwise noted (though this choice will not be particularly important for us).  We denote by $\cat{Cat}_{\infty}$ the $\infty$-category of small $\infty$-categories, and $\wh{\cat{Cat}}_\infty$ the category of not necessarily small $\infty$-categories~\cite[Def. 3.0.0.1]{topos}.  
We will also consider monoidal $\infty$-categories, and refer the reader to the appendix of \cite{CD} for a discussion.

\subsubsection{} We are interested in $k$-linear (stable) $\infty$-categories.  We refer the reader to \cite{HA} for the notion of a stable $\infty$-category, but the key feature is that the homotopy (1-)category of stable $\infty$-categories are canonically triangulated, and that the vast amount of structure in $\infty$-categories means stability is not an additional structure but a property of an $\infty$-category.  We denote by $\cat{St} \subset \cat{Cat}_\infty$ the subcategory of stable $\infty$-categories, with morphisms exact functors (i.e. functors which commute with finite limits and colimits).

\medskip

As a starting point, we define the category of chain complexes over a field $k$ as a stable $\infty$-category via the dg nerve of the dg category of $k$-chain complexes $\cat{Vect}_k := N_{dg}(\Ch_{dg}(k)).$
Every quasi-isomorphism of $k$-complexes is a homotopy equivalence, so there is no need to pass to the derived category.  This category is symmetric monoidal, since the underlying dg category is a strict symmetric monoidal dg category \cite[\textsection A.1.9]{CD}.  It is equipped with the usual t-structure, for which the symmetric monoidal structure maps are t-exact.

\medskip

A $k$-linear $\infty$-category is most easily defined to be a $\cat{Vect}_k$-module category in the $\infty$-categorical sense \cite[\textsection 3.3]{HA}.  Note that $\cat{Vect}_k$ is rigid, thus by typical arguments~\cite[Lem. 2.11]{BCHN} we may view a $k$-linear $\infty$-category as an $\infty$-category whose $\Hom$-spaces are enriched in $\cat{Vect}_k$.  Such categories are automatically stable \cite[Rem. 6.5]{dag7}.  We denote the category of $k$-linear $\infty$-categories by $\cat{St}_k$.  We denote by $\cat{Pr}^L_k \subset \wh{\cat{Cat}}_\infty$ the category of (large) presentable $\Perf(k)$-module (in particular, stable and possessing small colimits) $\infty$-categories, with morphisms colimit-preserving functors (equivalently by the $\infty$-adjoint functor theorem, functors which are left adjoints; see Section 5.5 of \cite{topos}).

\medskip

There is an ind-completion functor $\Ind: \cat{St}_k \rightarrow \cat{Pr}^L_k$ \cite[Def. 5.3.5.1]{topos}\cite[Prop. 1.1.3.6]{HA}.  
Note that the functor  $\Ind: \cat{St}_k \rightarrow \cat{Pr}^L_k$ commutes with small colimits \cite[Prop. 5.5.7.6, Not. 5.5.7.7]{topos}, but it does \emph{not} commute with limits.  In particular, if $G$ is a topological group action on $\cat{C} \in \cat{Cat}_\infty$, then the functor $\Ind(\cat{C}^G) \rightarrow \Ind(\cat{C})^G$ is not an equivalence in general; see the introduction to \cite{toly indcoh} and Definition \ref{ren G inv} for a discussion.

\subsubsection{Limits of $\infty$-categories and subcategories.}\label{truncated} We now recall standard results regarding limits of $\infty$-categories.  The following is \cite[Prop. 1.1.4.4, 5.5.3.13]{topos}, the point being that we do not have to be fussy about the context in which we take limits of $\infty$-categories.
\begin{prop}\label{limit commute}
The category $\cat{Pr}^L_k$ has all small limits, and the inclusion $\cat{Pr}^L_k \hookrightarrow \wh{\cat{Cat}}_\infty$ commutes with limits.  The category $\cat{St}_k$ has all small limits, and the inclusion $\cat{St}_k \hookrightarrow \cat{Cat}_\infty$ commutes with limits.  
\end{prop}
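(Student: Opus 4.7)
The plan is to reduce both statements to the corresponding non-linear results of Lurie, and then handle the $k$-linear enhancement uniformly by viewing the categories in question as module categories over a fixed $E_\infty$-algebra object in a presentable symmetric monoidal $\infty$-category. First I would invoke the references directly: HTT Proposition 5.5.3.13 establishes that $\cat{Pr}^L$ (without linearity) has all small limits and that the inclusion $\cat{Pr}^L \hookrightarrow \wh{\cat{Cat}}_\infty$ preserves them, while HA Proposition 1.1.4.4 gives the analogous statement for $\cat{St} = \cat{Cat}^{\mathrm{Ex}}_\infty \hookrightarrow \cat{Cat}_\infty$. This disposes of the non-linear halves of both assertions as direct citations.

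Next I would identify $\cat{St}_k$ with the category of modules $\Mod_{\Perf(k)}(\cat{St})$ under the tensor product of stable $\infty$-categories, and $\cat{Pr}^L_k$ with $\Mod_{\cat{Vect}_k}(\cat{Pr}^L)$ under the Lurie tensor product of presentable $\infty$-categories. Using the general fact (HA 3.4.4.6/4.2.3.3) that for an $E_\infty$-algebra object $A$ of a presentable symmetric monoidal $\infty$-category $\cat{M}$, the forgetful functor $\Mod_A(\cat{M}) \to \cat{M}$ preserves (and indeed creates) small limits, one concludes that limits in $\cat{St}_k$ (resp.\ $\cat{Pr}^L_k$) exist and coincide with limits in $\cat{St}$ (resp.\ $\cat{Pr}^L$). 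Composing with the first step, limits in $\cat{St}_k$ are computed underlying in $\cat{Cat}_\infty$, and limits in $\cat{Pr}^L_k$ are computed underlying in $\wh{\cat{Cat}}_\infty$, which is the desired conclusion.

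The one technical point requiring care, though a modest one, is reconciling the two reasonable notions of $k$-linearity appearing in the paper: a $\cat{Vect}_k$-module structure on the one hand, and a $\cat{Vect}_k$-enrichment on $\Hom$-spaces on the other. This is precisely where the rigidity of $\cat{Vect}_k$ (invoked earlier in the paper) is used: the two notions agree, so the module-theoretic reformulation above faithfully captures the definition of $\cat{St}_k$ and $\cat{Pr}^L_k$ as they are used throughout the paper. I expect this bookkeeping to be the only obstacle, since no fussiness about the ambient $\infty$-category is needed once the equivalence of the two definitions of linearity is in hand.
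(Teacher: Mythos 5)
Your proposal is correct and follows essentially the same route as the paper, which simply cites \cite[Prop.~5.5.3.13]{topos} and the stable analogue for the non-linear categories $\cat{Pr}^L$ and $\cat{St}$. The one thing you add beyond the paper's proof is the explicit reduction of the $k$-linear case via $\cat{Pr}^L_k \simeq \Mod_{\cat{Vect}_k}(\cat{Pr}^L)$ and $\cat{St}_k \simeq \Mod_{\Perf(k)}(\cat{St})$ together with the fact that forgetful functors from module categories create limits; this is a genuine (and welcome) filling-in of a step the paper leaves implicit, not a different argument.
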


Let us recall the notion of truncatedness \cite[Def. 5.5.6.1]{topos} in $\infty$-categories.  Namely, an object $X$ of an $\infty$-category $\cat{C}$ is \emph{$k$-truncated} if $\pi_i(\Map_{\cat{C}}(-, X)) = 0$ for $i > k$.  A morphism is \emph{$k$-truncated} if all of its homotopy fibers are $k$-truncated.  We say an $\infty$-category is $k$-truncated if it is $k$-truncated viewed as an object in the $\cat{Cat}_\infty$.   The category of $k$-truncated objects is stable under limits which exist \cite[Prop. 5.5.6.5]{topos}, and the inclusion of the full subcategory of $k$-truncated objects in presentable $\infty$-categories commutes with limits \cite[Prop. 5.5.6.18]{topos}.

\medskip

We will often use the fact that limits of $\infty$-categories preserve fully faithful functors.  This appears to be a well-known fact to experts, though we could not find an argument in the literature.  
\begin{prop}
Let $\cat{C}$ be an $\infty$-category with limits, and let $K$ be an indexing simplicial set, and $X_\bullet \rightarrow Y_\bullet$ be a map between two $K$-diagrams in $\cat{C}$ which is pointwise $k$-truncated.  Then, $\lim_K X_\bullet \rightarrow \lim_K Y_\bullet$ is $k$-truncated.
\end{prop}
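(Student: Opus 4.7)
The plan is to reduce the claim to the corresponding statement inside the $\infty$-category $\cS$ of spaces, where it follows from the fact that $k$-truncated spaces are closed under limits (HTT~5.5.6.5), already invoked in the preceding paragraph.

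First, we invoke the external characterization of $k$-truncated morphisms (HTT~5.5.6.8): a morphism $f: X \to Y$ in $\cC$ is $k$-truncated if and only if, for every $Z \in \cC$, the induced map of mapping spaces
$$f_*: \Map_{\cC}(Z, X) \longrightarrow \Map_{\cC}(Z, Y)$$
is $k$-truncated in $\cS$. Since the representable functor $\Map_{\cC}(Z, -): \cC \to \cS$ preserves limits, the map $\Map_{\cC}(Z, \lim_K X_\bullet) \to \Map_{\cC}(Z, \lim_K Y_\bullet)$ is identified with $\lim_K f_{\bullet,*}$, the $K$-indexed limit of the pointwise $k$-truncated maps $\Map_{\cC}(Z, X_k) \to \Map_{\cC}(Z, Y_k)$ in $\cS$. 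Thus it suffices to prove the proposition in the case $\cC = \cS$.

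For $\cS$, I would argue pointwise on fibers. A map $g: A \to B$ in $\cS$ is $k$-truncated if and only if every homotopy fiber of $g$ is a $k$-truncated space. Given a point $b \in \lim_K B_\bullet$ projecting to $b_k \in B_k$, the interchange of limits with pullbacks yields
$$\mathrm{fib}_{b}\bigl(\lim_K A_\bullet \to \lim_K B_\bullet\bigr) \;\simeq\; \lim_K \bigl( A_k \times_{B_k} \{b_k\} \bigr) \;\simeq\; \lim_K \mathrm{fib}_{b_k}(g_k).$$
By hypothesis each $\mathrm{fib}_{b_k}(g_k)$ is $k$-truncated, and the full subcategory of $k$-truncated objects of $\cS$ is stable under limits by HTT~5.5.6.5, so the fiber on the left is $k$-truncated. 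This verifies the claim in $\cS$, and via the mapping space reduction, in $\cC$.

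The only mildly delicate point is the appeal to HTT~5.5.6.8 to pass between the internal definition of $k$-truncatedness (given in the paragraph on truncatedness above) and the external mapping-space criterion; once this is granted, the proof is entirely formal, amounting to the interchange of limits with representables and with pullbacks. I do not anticipate any obstacles specific to the stable, $k$-linear, or presentable settings that the paper subsequently works in, because the argument takes place at the level of abstract $\infty$-categories with limits.
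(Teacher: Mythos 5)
Your proof is correct, but it takes a genuinely different route from the paper's. The paper argues by induction on $k$: by HTT Lemma 5.5.6.15 a morphism is $k$-truncated iff its diagonal is $(k-1)$-truncated, and since limits commute with fiber products the diagonal of $\lim_K X_\bullet \rightarrow \lim_K Y_\bullet$ is the limit of the pointwise diagonals, which are pointwise $(k-1)$-truncated; the base case $k=-2$ is just that a pointwise equivalence of diagrams induces an equivalence on limits. You instead reduce to the $\infty$-category $\mathcal{S}$ of spaces via the mapping-space criterion for truncatedness of morphisms (note that HTT 5.5.6.8 is the definition of a truncated morphism as a truncated object of a slice category; the external mapping-space characterization you want is the remark following it, though the fact itself is standard), and then argue fiberwise in $\mathcal{S}$ using the commutation of limits with pullbacks and the stability of $k$-truncated spaces under limits (HTT 5.5.6.5). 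Both arguments are formal and hinge on the same interchange of limits with fiber products; yours trades the induction for the fiberwise characterization of truncated maps of spaces, which must itself be granted as an input (including the edge case $k=-2$, where ``all homotopy fibers contractible'' does imply ``equivalence''), while the paper's version stays entirely internal to $\cat{C}$ and needs no fiberwise criterion. Either route is acceptable.
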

\begin{proof}
By \cite[Lem. 5.5.6.15]{topos} it suffices to show that 
$$\lim_K X_\bullet \rightarrow \lim_K X_\bullet \times_{\lim_K Y_\bullet} \lim_K X_\bullet \simeq \lim_K (X_\bullet \times_{Y_\bullet} X_\bullet)$$
is $(k-1)$-truncated.  This would follow by the statement of the proposition for $k-1$, thus we may induct.  The case where $k=-2$, i.e. where the arrows are equivalences, is evident.
\end{proof}
\begin{cor}\label{ff limit}
Let $K$ be an indexing simplicial set, and $\cat{C}_\bullet \hookrightarrow \cat{D}_\bullet$ a map of two $K$-diagrams of $\infty$-categories which is pointwise fully faithful.  Then, the induced functor on limits $F: \lim_K \cat{C}_\bullet \rightarrow \lim_K \cat{D}_\bullet$ is fully faithful with essential image the full subcategory of objects  which evaluate to the subcategories $\cat{C}_\kappa$ for every 0-simplex $\kappa \in K$.
\end{cor}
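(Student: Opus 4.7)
The first assertion, that $F$ is fully faithful, is immediate from the preceding proposition applied with $k=-1$: a functor of $\infty$-categories is fully faithful if and only if the diagonal $\cC \to \cC \times_\cD \cC$ is an equivalence, which is precisely the condition of being $(-1)$-truncated as a morphism in $\cat{Cat}_\infty$. Since the hypothesis gives us a pointwise $(-1)$-truncated morphism of $K$-diagrams, the proposition delivers that $F$ is $(-1)$-truncated, hence fully faithful.

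For the essential image, the functor $F$ factors through the full subcategory $\cE \subseteq \lim_K \cD_\bullet$ of objects $X$ with $X_\kappa \in \cC_\kappa$ for each $0$-simplex $\kappa$, since the evaluation functor $\mathrm{ev}_\kappa: \lim_K \cD_\bullet \to \cD_\kappa$ restricts on $\lim_K \cC_\bullet$ to land in $\cC_\kappa$. The induced functor $\widetilde F: \lim_K \cC_\bullet \to \cE$ is then fully faithful, being the composition of $F$ with the inclusion of a full subcategory. It remains only to show essential surjectivity of $\widetilde F$.

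The plan is to reduce this to a statement about spaces. The maximal-$\infty$-groupoid functor $(-)^\simeq: \cat{Cat}_\infty \to \cat{Grp}_\infty$ is right adjoint to the inclusion and therefore commutes with limits, so for any $K$-diagram $\cF_\bullet$ we have $(\lim_K \cF_\bullet)^\simeq \simeq \lim_K (\cF_\bullet^\simeq)$. A full subcategory of an $\infty$-category is determined by its collection of objects, equivalently the corresponding union of connected components of the maximal sub-$\infty$-groupoid, so identifying the essential image of $\widetilde F$ reduces to identifying the image of $\lim_K \cC_\bullet^\simeq \to \lim_K \cD_\bullet^\simeq$ inside the pointwise-defined subspace cut out by $\cC_\kappa^\simeq \subseteq \cD_\kappa^\simeq$. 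In the category of spaces, a pointwise inclusion of unions of connected components induces on limits an inclusion whose image is precisely the subspace of points landing in the prescribed components at each vertex; this is a standard fact about limits of spaces, which can be proved directly by taking fibers or deduced from the fact that $(-1)$-truncatedness of maps of spaces is detected fiberwise.

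The main technical obstacle is the last step, namely making precise the assertion that ``a coherent family of objects in $\cD_\bullet$ whose components lie pointwise in $\cC_\bullet$ automatically assembles into a coherent family in $\cC_\bullet$''. All higher coherence data in such a family consists of edges and higher simplices in mapping spaces between objects of $\cC_\kappa$, and since $\cC_\kappa \hookrightarrow \cD_\kappa$ is fully faithful these mapping spaces agree; this is precisely what makes the reduction to $(-)^\simeq$ valid, and thus resolves the only genuine obstacle.
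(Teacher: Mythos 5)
The essential-image half of your argument is sound and fills in a step the paper leaves implicit: you reduce to maximal sub-$\infty$-groupoids via the limit-preserving functor $(-)^\simeq$ and use that a pointwise inclusion of unions of connected components is pulled back from its effect on $\pi_0$, so that the limit is cut out by the pointwise membership condition. However, your first step contains a genuine error. You assert that a functor of $\infty$-categories is fully faithful if and only if it is $(-1)$-truncated as a morphism of $\cat{Cat}_\infty$, i.e. if and only if its diagonal is an equivalence. Only one implication holds: fully faithful functors are $(-1)$-truncated, but the converse fails. The $(-1)$-truncated morphisms of $\cat{Cat}_\infty$ are the monomorphisms, which are (up to equivalence) the inclusions of subcategories closed under equivalences, not necessarily full ones. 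For instance, the inclusion $\partial\Delta^1 \hookrightarrow \Delta^1$ induces an equivalence $\partial\Delta^1 \rightarrow \partial\Delta^1 \times_{\Delta^1} \partial\Delta^1$, hence is a monomorphism, but it is certainly not fully faithful. Consequently, applying the preceding proposition with $k=-1$ only shows that $F: \lim_K\cat{C}_\bullet\rightarrow\lim_K\cat{D}_\bullet$ is a monomorphism, which is strictly weaker than full faithfulness, and your deduction does not go through as written. (The paper itself offers no explicit proof of the corollary, so it is a fair reading of its intent, but the gap is real in the argument you have committed to paper.)

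The conclusion is nevertheless true, and the fix is short. Full faithfulness of a functor $G:\cX\rightarrow\cY$ is equivalent to the square obtained by applying $\Fun(\Delta^1,-)^\simeq$ and $\Fun(\partial\Delta^1,-)^\simeq$ to $G$ being a pullback of spaces: the induced map on fibers over a pair of objects $(x,x')$ is exactly $\Map_{\cX}(x,x')\rightarrow\Map_{\cY}(Gx,Gx')$. All four corners are limit-preserving functors of the argument, so a pointwise pullback condition passes to the limit over $K$; equivalently, mapping spaces in $\lim_K\cat{C}_\bullet$ are the limits of the pointwise mapping spaces, and a pointwise equivalence of $K$-diagrams of spaces induces an equivalence on limits. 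Either formulation yields full faithfulness of $F$ directly, after which your treatment of the essential image applies verbatim.
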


\subsubsection{Derived categories.}  As we will see in Section \ref{dag sec}, there is a way to define the stable $\infty$-category of quasi-coherent sheaves on any stack $X$ intrinsically in the $\infty$-categorical setting, entirely without reference to abelian categories.  However, it will sometimes be useful to produce, as a stable $\infty$-category, the derived category of an abelian or dg category.  This is done by model structures.
\begin{enumerate}
\item If $\cat{C}$ is a Grothendieck abelian category, one may consider its injective model structure and define $D(\cat{C}) := N_{dg}(\Ch(\cat{C})^f)$ to be the dg nerve of the full subcategory of fibrant (automatically cofibrant) objects.  By \cite[Prop 1.3.5.13]{HA} it is a localization of $N_{dg}(\Ch(\cat{C}))$ with respect to quasi-isomorphisms, and by \cite[Prop 1.3.5.21]{HA} this category is presentable.  See \cite[\textsection 1.3.5]{HA} for a discussion.
\item If $A$ is a dg algebra, then by \cite[Thm. 9.10]{BMR}, the dg category $\Ch(A)$ of unbounded complexes of $A$-modules has a projective model structure where the cofibrant objects are retracts of semi-free complexes.  We define $D(A) := N_{dg}(\Ch(A)^{c})$ to be the dg nerve of the full subcategory of cofibrant (automatically fibrant) objects, i.e. of semi-free complexes.
\end{enumerate}

The construction (1) is useful in treating classical non-affine situations, since categories of sheaves are often Grothendieck abelian.  The construction (2) is useful in treating derived affine situations.  We are not aware of a result ``joining'' the two.  One can argue using the universal property of localizations~\cite[Prop. 5.5.4.20]{HTT} that the two constructions give canonically equivalent categories in their ``meet''.

\subsubsection{Derived algebraic geometry.}\label{dag sec}

We take the approach to derived algebraic geometry developed in \cite{GR}.  For any commutative $\bQ$-algebra $R$, viewed as an $E_\infty$-algebra, one can define the category of $E_\infty$ $R$-algebras intrinsically to $\infty$-categories \cite[Def. 7.1.0.1, Rmk. 7.1.0.3]{HA}.  Taking $R=k$ to be our field of characteristic 0, one can define
$$\cat{DRng}_k := \mathrm{Alg}_{E_\infty}(\cat{Vect}_k^{\leq 0}).$$
By \cite[Prop. 7.1.4.11]{HA}, there is a model structure on the category of commutative connective dg algebras and an equivalence
$$N_{dg}(\mathrm{CAlg}^{\leq 0}_{dg}(k)^c)[W^{-1}] \simeq \mathrm{Alg}_{E_\infty}(\cat{Vect}_k)$$
between the dg nerve of cofibrant commutative dg $k$-algebras localized with respect to weak equivalences, and the abstractly defined category $\cat{DRng}_k$ as defined above.  In particular, by the discussion preceding Proposition 7.1.4.20, semi-free dg algebras are cofibrant, thus for any dg algebra $A$ we may (up to contractible homotopy) define an object of $\cat{DRng}$ by choosing a semi-free resolution of $A$ and passing it through this equivalence.

\medskip

The category of \emph{affine derived schemes} is simply defined $\cat{Aff}_k := \cat{DRng}_k^{\opp}$ and an \emph{affine derived scheme} is a formal symbol $\Spec A$ for $A$ a derived ring.  A \emph{prestack} is an $\infty$-functor $X: \cat{Aff}_k^{\opp} \rightarrow \cat{Grpd}_\infty$ from the $\infty$-category of dg $k$-algebras to the $\infty$-category of $\infty$-groupoids (i.e. spaces).  We refer the reader to \emph{op. cit.} for definitions of these notions, and other standard ones such as Artin $n$-stacks.  We also use the notion of a \emph{QCA stack} from \cite{QCA}, which have the important property that $\IndCoh(X) = \Ind(\Coh(X))$.

\subsubsection{} The approach of \cite{GR} uses the following tautological definition of the derived category of quasicoherent of sheaves for any prestack.  Namely, let $S = \Spec(A)$ be an affine derived scheme, i.e. $A$ is an algebra object in $\cat{Vect}^{\leq 0}_k$.  The symmetric monoidal structure on $\cat{Vect}_k$ makes $\cat{Vect}_k$ a module category for the symmetric monoidal category $\cat{Vect}_k^{\leq 0}$, and we define
$$\QCoh(S) := \Mod(\cat{Vect}_k, A)$$
to be the category of $A$-module objects in $\cat{Vect}_k$ \cite[\textsection 4]{HA}\cite[\textsection A.2]{CD}.\footnote{One can argue that $\QCoh(S) \simeq D(A)$ since both categories are generated by the free object $A$ under colimits, shifts and retracts.}  We let $\Perf(S) \subset \QCoh(S)$ denote the smallest full idempotent-complete  stable subcategory containing the free object $A$ (see \cite[I.3 3.6]{GR} for equivalent characterizations).  Now, for any prestack $X$ we define $\Perf(X)$ and $\QCoh(X)$ via right Kan extension
$$\left(  \Perf(X) = \lim_{S = \Spec(A) \rightarrow X} \Perf(S) \right) \subset \left( \QCoh(X) := \lim_{S = \Spec(A) \rightarrow X} \QCoh(S)\right).$$
When $X$ is an Artin stack with atlas $p: U \rightarrow X$, we may compute $\QCoh(X)$ via the $\infty$-totalization of the usual Cech diagram.  We say $\cE \in \QCoh(X)$ is \emph{perfect in degrees} $[a, b]$ if for every $x \in X(S)$, we have $H^i(x^*\cE) = 0$ outside of the range $[a, b]$.  

\subsubsection{} The category $\QCoh(X)$ is equipped with a canonical t-struture, and one can identify the above abstract definition of $\QCoh(X)$ with a more classical presentations of the above categories via this t-structure.  See \cite{BFN} for the notion of a perfect stack.
\begin{prop}\label{classical is derived qcoh}
Suppose that $X$ is a classical perfect stack (e.g. the quotient of a quasi-projective scheme by an affine algebraic group).  Then, $\QCoh(X)^\heartsuit$ is the usual 1-category of quasicoherent sheaves on the stack $X$, and the canonical t-exact map $D(\QCoh(X))^\heartsuit \rightarrow \QCoh(X)$ is an equivalence.
\end{prop}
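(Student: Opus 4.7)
The plan is in two parts, one for each assertion of the proposition, both deduced from the affine case by descent, using that both the heart functor and the derived-category construction interact well with limits of stable $\infty$-categories along t-exact functors.

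\emph{Identification of the heart.} Unwinding the definition $\QCoh(X) = \lim_{\Spec A \to X} \QCoh(\Spec A)$ and invoking Proposition \ref{limit commute}, the first step is to argue that passage to the heart commutes with limits of stable $\infty$-categories along t-exact functors. Since t-exactness of the transition functors preserves the full subcategories $\cC^{\leq 0}$ and $\cC^{\geq 0}$, Corollary \ref{ff limit} gives $(\lim_i \cC_i)^{\leq 0} = \lim_i \cC_i^{\leq 0}$ and likewise on the right, so intersecting yields $(\lim_i \cC_i)^\heartsuit = \lim_i \cC_i^\heartsuit$. For classical $A$, the heart $\QCoh(\Spec A)^\heartsuit$ is the ordinary 1-category of $A$-modules, so taking the limit over the affine site of $X$ recovers the classical 1-category of quasi-coherent sheaves by standard 1-categorical descent.

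\emph{Derived equivalence.} The inclusion $\QCoh(X)^\heartsuit \hookrightarrow \QCoh(X)$ extends to a canonical t-exact, colimit-preserving functor $\Phi : D(\QCoh(X)^\heartsuit) \to \QCoh(X)$ via the universal property of derived categories of Grothendieck abelian categories \cite[\textsection 1.3.5]{HA}, and by the first part it is the identity on hearts. The perfect stack hypothesis makes $\QCoh(X)$ compactly generated by the cohomologically bounded category $\Perf(X)$, which forces its t-structure to be both left and right complete; the t-structure on $D(\QCoh(X)^\heartsuit)$ is similarly complete. It therefore suffices to prove $\Phi$ is an equivalence on bounded objects, for which one inducts on cohomological amplitude using the distinguished triangles $\tau^{\leq n}M \to M \to \tau^{\geq n+1}M$ and the five lemma. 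The inductive step reduces to establishing the Ext comparison
\[
\Ext^i_{\QCoh(X)^\heartsuit}(\cF,\cG) \xrightarrow{\sim} \pi_{-i}\Map_{\QCoh(X)}(\cF,\cG), \qquad \cF,\cG \in \QCoh(X)^\heartsuit,
\]
for all $i \geq 0$.

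\emph{Main obstacle.} The hardest step is this Ext comparison, equivalently, the claim that injective objects of the Grothendieck abelian category $\QCoh(X)^\heartsuit$ remain injective in the derived $\infty$-categorical sense, so that abelian injective resolutions compute $\infty$-categorical mapping spectra. For a quotient $X = [Y/G]$ with $Y$ quasi-projective and $G$ affine algebraic in characteristic $0$, the plan is to reduce via smooth descent along the atlas $Y \to [Y/G]$ to the affine case, where the identification $D(A\text{-Mod}) \simeq \QCoh(\Spec A)$ is built into the definition; cohomological finiteness of $G$-representations in characteristic zero is what makes this descent respect the heart structure. Bootstrapping the bounded equivalence to unbounded complexes then uses left completeness of both t-structures, which is ultimately the content of the perfect stack hypothesis.
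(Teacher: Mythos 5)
Your overall architecture matches the paper's: identify the heart by a descent/limit argument using t-exactness, build the comparison functor from t-completeness, prove the bounded(-below) equivalence, and bootstrap to unbounded complexes using perfectness. However, there are two genuine problems. First, in the heart identification you apply the principle ``passage to the heart commutes with limits along t-exact functors'' to the limit over the \emph{entire} affine site $\lim_{\Spec A \to X}\QCoh(\Spec A)$. The transition functors there are derived pullbacks along arbitrary maps of affines over $X$, which are only right t-exact (non-flat base change has higher Tor's), so the hypothesis of your own lemma fails and $\lim \QCoh(\Spec A)^\heartsuit$ is not the heart of the limit. The fix is the one the paper uses: restrict to the \v{C}ech nerve of a smooth atlas $U \to X$, where all pullbacks are flat, hence t-exact, and then invoke Corollary \ref{ff limit} together with the observation that a limit of discrete $\infty$-categories may be computed in 1-categories.

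Second, the step you correctly single out as the main obstacle --- that $\Ext^i$ in the abelian category $\QCoh(X)^\heartsuit$ agrees with $\pi_{-i}$ of the derived mapping space --- is exactly where the paper leans on external input ([GR, I.3 Lem.\ 2.4.5] and [QCA, Rem.\ 1.2.10], which give the equivalence on $\QCoh(X)^+$ for a classical algebraic stack with affine diagonal), and your proposed route to it does not work as sketched. Injective objects of $\QCoh(X)^\heartsuit$ do \emph{not} restrict to injectives on the atlas, so the descent spectral sequence for $\Map_{\QCoh(X)}(\cF, I)$ does not collapse for free; the actual input is affineness of the diagonal (so the terms of the \v{C}ech nerve are affine over $U$ and pushforward along them is t-exact), not the characteristic-zero/cohomological-finiteness of $\Rep(G)$ you invoke, which is a red herring here. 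Relatedly, the paper gets left and right t-completeness of $\QCoh(X)$ from [DAG8, Prop.\ 3.5.14] for general reasons and reserves the perfectness hypothesis for passing from the eventually-coconnective equivalence to the unbounded one; your derivation of completeness from compact generation by $\Perf(X)$ is not wrong in spirit but conflates these two uses.
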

\begin{proof}
The category $\QCoh(X)$ is both left and right t-complete by \cite[Prop. 3.5.14]{DAG8} then, by \cite[Rem. 1.3.5.23]{HA}, we have a t-exact functor $D(\QCoh(X)^\heartsuit) \rightarrow \QCoh(X)$.  It is an equivalence follows by \cite[I.3 Lem. 2.4.5]{GR} and \cite[Rem. 1.2.10]{QCA}, i.e. since $X$ is a classical algebraic stack with affine diagonal, the functor $D(\QCoh(X))^+ \rightarrow \QCoh(X)^+$ is an equivalence, and if $X$ is perfect then the functor is an equivalence.  Furthermore, we may identify $\QCoh(X)^\heartsuit$ with the usual 1-category of quasicoherent sheaves on $X$: choose a smooth cover $p: U \rightarrow X$; since $p^*$ is t-exact, it preserves the heart, and by Corollary \ref{ff limit} we have that $\QCoh(X)^\heartsuit$ is the $\infty$-totalization of the Cech diagram with terms $\QCoh(U \times_X \cdots \times_X U)^{\heartsuit}$.  Since these are discrete $\infty$-categories, i.e. 1-categories, the limit may be taken in 1-categories (see the discussion in Section \ref{truncated}).
\end{proof}

\subsection{Acknowledgements}

I would like to thank David Ben-Zvi for extensive discussions on this subject in preparation of the paper \cite{BCHN}, especially surrounding shearing and (de-)equivariantization.  I would like to thank Gurbir Dhillon for many conversations surrounding equivariant $\cD$-modules and continuous ind-coherent sheaves in preparation of the work \cite{CD} and for numerous helpful comments on a draft of this paper.  Finally, I would like to thank Bertrand To\"{e}n for discussion surrounding filtered $\cD$-modules, Aron Heleodoro for stimulating discussions on this paper, David Nadler for discussions on circle actions, Sam Raskin for suggestions regarding renormalizations of Koszul duality from \cite{raskin}, and Anatoly Preygel for discussions on his work \cite{toly indcoh}.

\section{Renormalized linear Koszul duality and odd tangent bundles}\label{graded kd sec}

In this section we introduce a certain renormalization of the category of ind-coherent sheaves which appears in our version of Koszul duality.  We first review and recast the linear Koszul duality of Mirkovi\'c and Riche \cite{MR, MR IM, MR2} in the language of derived algebraic geometry in Section \ref{linear koszul}, treating both the graded and non-graded settings by $\bG_m$-equivariantization and de-equivariantization.   In Section \ref{ren linear kd} we enlarge the categories of compact objects and extended linear Koszul duality to this enlargement in Theorem \ref{kd graded ren}. In Section \ref{ren loops} we apply the discussion to the case of odd tangent bundles.

\subsection{Linear Koszul duality}\label{linear koszul}

In this section we review the linear Koszul duality of Mirkovi\'{c} and Riche \cite{MR, MR IM, MR2}.  We will reformulate their results in our notation and on the foundations of derived algebraic geometry and stable $\infty$-categories.  In addition, we also consider non-$\bG_m$-equivariant versions of their statements.

\subsubsection{}

Let $X$ be a prestack.  We define a \emph{perfect derived vector bundle}  $\bE_X$ on $X$ to be a pair $(X, \cE)$ where $\mathcal{E}$ is a perfect complex on $X$ of \emph{linear sections}.  We call its dual $\cE^\vee$ the \emph{sheaf of linear functionals}, and the algebra object of $\QCoh(X)$ denoted $\cO_{\bE_X} := \Sym_X \cE^\vee$ is its  \emph{structure sheaf}.  We say that $\bE_X$ is \emph{perfect in degrees $[a, b]$} if its sheaf of linear sections $\cE$ is perfect in degrees $[a, b]$.  We say a derived vector bundle is \emph{connective} if it is perfect in degrees $[0, \infty)$, and \emph{coconnective} if it is perfect in degrees $(-\infty, 0]$.  When $\bE_X$ is connective, we may view the a priori formal symbol $\bE_X$ as a prestack which is affine over $X$
$$\bE_X = \Spec_X \Sym_X \mathcal{E}^\vee.$$

If $\pi: X \rightarrow Y$ is a map of prestacks and $\bE_Y = (Y, \cE)$ is a derived vector bundle, then its base change $\bE_X := \bE_Y \times_Y X = (X, \pi^*\cE)$ to $X$ is a derived vector bundle.  We also denote by $X$ the trivial derived vector bundle corresponding to $(X, 0)$.  A derived vector bundle $\bE_X = (X, \cE)$ may be cohomologically shifted $\bE_X[n] = (X, \cE[n])$ with structure sheaf $\cO_{\bE_X[n]} = \Sym_X(\cE^\vee[-n])$.

\medskip

We may introduce a weight grading, or $\bG_m$-action, by assigning $\cE$ any integral weight.  Most commonly, we will give it weight $\pm 1$.  When the weight is $+1$, we say the the action is \emph{contracting}, and when the weight is $-1$, we say it is \emph{expanding}.  Furthermore, when $\bE_X$ is connective, we may form the affine over $X \times B\bG_m$ prestack
$$\bE_X/\bG_m = (\Spec_X \Sym_X \mathcal{E}^\vee)/\bG_m.$$
With the standard contracting action, shifting $\bE_X$ to $\bE_X[n]$ corresponds to a weight-degree shearing on the structure sheaf which shifts the weight $k$ in the negative (or left) direction by $nk$.

\medskip

\begin{warning}\label{prestack warning}
When $\bE_X$ is connective, we will treat it as a derived stack.  In the non-connective case, it is possible to do this as well by defining its functor of points; in the coconnective case, this gives rise to To\"{e}n's affine stacks~\cite{toen affine} or Lurie's coaffine stacks~\cite{lurie coaffine}.  However, this is \emph{not what we do}, and the category $\QCoh(\bE_X)$ that we will define in Definition \ref{cat def} will not be compatible with this interpretation, and other categories we define can exhibit some strange behavior (see Example \ref{non graded kd exmp}).  That is, in the non-connective case, we view $\bE_X$ as a formal symbol.  
\end{warning}

\begin{exmp}
Take $X = \Spec k$ and $\bE_X$ the derived vector bundle attached to $k[2]$, i.e. so that $\cO_{\bE_X} = k[u]$ where $|u| = 2$.  We will soon define $\QCoh(\bE_X)$ to be the derived category of $k[u]$-modules.  However, the corresponding coaffine stack is $B^2\bG_a$, and by Proposition \ref{bga} the category $\QCoh(B^2\bG_a)$ is equivalent to the full subcategory of $k[u]$-modules consisting of locally $u$-nilpotent modules.  One can verify that the same discrepancy arises in the graded situation.
\end{exmp}

\subsubsection{} We make a brief digression on shearing.  The category $\QCoh(B\bG_m)$, i.e. internally $\bZ$-graded chain complexes, has a family of monoidal automorphisms indexed by $n \in \bZ$:
$$\llb n\rrb: \QCoh(B\bG_m) \rightarrow \QCoh(B\bG_m), \;\;\;\;\;\;\;\; M \mapsto M{\llb n \rrb} := \bigoplus_{n \in \bZ} M_n[n].$$
Thus, for any $\QCoh(B\bG_m)$-module category $\cat{C}$, we may act on $\cat{C}$ via twist by this automorphism.
\begin{defn}\label{def shear}
Let $\cat{C}$ be a $\QCoh(B\bG_m)$-module category.  We define the \emph{$n$-shearing} $\cat{C}^{\llb n\rrb}$ to be the category with the same underlying category $\cat{C}$, but the $\QCoh(B\bG_m)$-action is twisted by the automorphism $\bbb{-n}$.  We note that the identity functor on the underlying category $\cat{C} = \cat{C}^{\bbb n}$ is not $\QCoh(B\bG_m)$-equivariant.
\end{defn}

\medskip

We heuristically explain the sign discrepancy as follows. Let $V \in \QCoh(B\bG_m)^{\bbb{n}}$; by definition, it is isomorphic to the action of $V^{\bbb{n}}$ on $k \in \QCoh(B\bG_m)^{\bbb{n}}$.  In particular, the image of $V$ in the de-equivariantization is exactly $V\bbb{n}$.  Alternatively, thinking of a $\QCoh(B\bG_m)$-module category as a category enriched in $\QCoh(B\bG_m)$ \cite[\textsection 3.3.3]{ho li}, by our definition above we have the formula
$$\uHom_{\cat{C}^{\llb n\rrb}}(X, Y) = \uHom_{\cat{C}}(X, Y)^{\llb n\rrb}.$$
If $A$ is an algebra object in $\QCoh(B\bG_m)$, then by our convention
$$\Mod_{\QCoh(B\bG_m)}(A)^{\bbb{n}} \simeq \Mod_{\QCoh(B\bG_m)}(A{\bbb{n}}).$$
Note that if $n$ is even, then shearing preserves (super-)commutativity, but otherwise it may not.  

\medskip

By the (de-)equivariantization correspondence (see Section \ref{sec bga}), we may use this to define a shearing on $\QCoh(\bG_m)$-module categories or $\QCoh(\bG_m)$-comodule categories.  Unlike above this operation changes the underlying category.
\begin{defn}\label{deeq shear}
Let $\cat{C}$ be a $\QCoh(\bG_m)$-module category.  We define the \emph{de-equivariantized shearing}
$$\cat{C}^{\bbb{n}} := (\cat{C}^{\QCoh(\bG_m)})^{\bbb{n}} \otimes_{\QCoh(B\bG_m)} \cat{Vect}_k$$
i.e. by equivariantizing, changing the $\QCoh(B\bG_m)$-action, then de-equivariantizing.
\end{defn}

There is a particular shearing that features prominently in Koszul duality, and we introduce special notation for it from \cite{BG}.
\begin{defn}\label{tate shear}
We use the notation $\shear = \bbb{-2}$ to denote the \emph{Tate shearing}, which puts complexes concentrated in degree 0 on the ``Tate line'' in weight-degree $(k, 2k)$, and $\unshear = \bbb{2}$ for the \emph{Tate unshearing}, which reverses it.
\end{defn}

\subsubsection{} We are interested in defining certain categories of sheaves attached to the formal symbol $\bE_X$.  When $\bE_X$ is connective, we define the categories by interpreting $\bE_X$ and $\bE_X/\bG_m$ as derived stacks.  When $\bE_X$ is not connective, we define the category via shearing.

\begin{defn}\label{cat def}
Let $X$ be any prestack and $\bE_X$ a derived vector bundle with $\bG_m$ acting by nonzero weight.  Choose an $n$ such that $\bE_X[n]$ is connective and consider the $\bG_m$-action on $\bE_X[n]$.  We define the categories via shearing (see Definitions \ref{def shear} and \ref{deeq shear}):
$$\QCoh(\bE_X/\bG_m) := \QCoh(\bE_X[n]/\bG_m)^{\bbb{-n}}, \;\;\;\;\;\;\;\;\;\; \IndCoh_X(\bE_X/\bG_m) := \IndCoh_X(\bE_X[n]/\bG_m)^{\bbb{-n}},$$
$$\QCoh(\bE_X) := \QCoh(\bE_X[n])^{\bbb{-n}}, \;\;\;\;\;\;\;\;\;\; \IndCoh_X(\bE_X) := \IndCoh_X(\bE_X[n])^{\bbb{-n}}.$$  
We often consider the \emph{formal derived vector bundle} $\wh{\bE}_X$ (completed along the zero section), and define
$$\IndCoh(\wh{\bE}_X/\bG_m) := \IndCoh_X(\bE_X/\bG_m), \;\;\;\;\;\;\;\;\;\; \IndCoh(\wh{\bE}_X) := \IndCoh_X(\bE_X).$$
\end{defn}

\begin{rmk}
Morally, for the graded categories, the idea is that the category of graded $\cO_{\bE_X}$-modules on $X$ is equivalent via an explicit and analogous weight shearing operation to the category of graded $\cO_{\bE_X[n]}$-modules on $X$.  This identification is canonical.  In the above we simply define the two categories to be the same, and detect the difference via the $\QCoh(B\bG_m)$-module structure.  On the other hand, the sheaves $\cO_{\bE_X}$ and $\cO_{\bE_X[n]}$ are distinguishable in the de-equivariantization without regard to module structures.
\end{rmk}

\medskip

We establish the following basic property, which tells us that even when $\bE_X$ is not connective, the category $\QCoh(\bE_X)$, while differing from the interpretation as a derived stack, is just a category of modules for an algebra object in $\QCoh(X)$.  On the other hand, we will see, such as in Example \ref{non graded kd exmp}, that its various renormalizations such as $\IndCoh_X(\bE_X)$ can behave in perhaps unintuitive (but interesting) ways.
\begin{prop}\label{shear cat prop}
The category $\QCoh(\bE_X)$ may be canonically identified with the category of $\cO_{\bE_X}$-modules in $\QCoh(X)$.  Likewise, $\QCoh(\bE_X/\bG_m)$ is the category of weight-graded $\cO_{\bE_X}$-modules in $\QCoh(X)$.  In particular, the above definitions are independent of the choice of $n$.
\end{prop}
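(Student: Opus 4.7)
The plan is to reduce to the case where $\bE_X[n]$ is connective, in which case quasi-coherent sheaves on the derived vector bundle are directly described as modules for the structure sheaf, and then transport this description across the defining shearing operation.

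First, for $\bE_X[n]$ connective, the projection $\bE_X[n] = \Spec_X \Sym_X(\cE^\vee[-n]) \to X$ is affine, so the standard affine description in derived algebraic geometry (cf.~\cite[I.3]{GR}) gives a canonical equivalence of $\QCoh(X)$-module categories
$$\QCoh(\bE_X[n]) \simeq \Mod_{\QCoh(X)}(\cO_{\bE_X[n]}),$$
which together with the weight $\pm 1$ grading on $\cE^\vee$ upgrades to $\QCoh(\bE_X[n]/\bG_m) \simeq \Mod_{\QCoh(X \times B\bG_m)}(\cO_{\bE_X[n]})$. Second, since $\bbb{-n}$ is a monoidal autoequivalence of $\QCoh(B\bG_m)$, a standard formal argument --- any monoidal autoequivalence $\Phi$ of a symmetric monoidal category $\mc{V}$ sends an algebra $A$ to $\Phi(A)$ and induces a canonical equivalence of $\mc{V}$-module categories $\Mod(A)^\Phi \simeq \Mod(\Phi(A))$ --- yields
$$\QCoh(\bE_X[n]/\bG_m)^{\bbb{-n}} \simeq \Mod_{\QCoh(X \times B\bG_m)}\!\left(\cO_{\bE_X[n]}\bbb{-n}\right).$$
The graded statement then follows from the algebra-level identification $\cO_{\bE_X[n]}\bbb{-n} \simeq \cO_{\bE_X}$, and the ungraded statement follows by de-equivariantizing over $\QCoh(B\bG_m)$ as in Definition \ref{deeq shear}, which on module categories amounts to forgetting the grading.

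The principal obstacle is verifying the algebra-level identification $\cO_{\bE_X[n]}\bbb{-n} \simeq \cO_{\bE_X}$. At the level of graded pieces this is immediate, since the weight-$k$ component of $\Sym_X(\cE^\vee[-n])$ differs from $\Sym^k\cE^\vee$ by a cohomological shift proportional to $nk$, which is exactly what $\bbb{-n}$ absorbs. The subtlety lies in the multiplicative structure when $n$ is odd: in that case the symmetric algebra on an odd-shifted complex is modeled by a shifted exterior algebra with Koszul signs, and one must check that the shearing intertwines the two graded-commutative multiplications consistently. This is standard in the Koszul duality literature (compare~\cite{BG, loops and conns}) and can be verified by representing both sides via cofibrant commutative dg algebras using the equivalence \cite[Prop.~7.1.4.11]{HA}. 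Once this is in hand, independence of the choice of $n$ is automatic: for any two choices $n_1, n_2$, the transitivity $\bbb{-n_1} \simeq \bbb{n_2 - n_1} \circ \bbb{-n_2}$ of shearing produces a canonical comparison of the two resulting categories, both of which match $\Mod_{\QCoh(X)}(\cO_{\bE_X})$.
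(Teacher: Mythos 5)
Your proposal is correct and follows essentially the same route as the paper: both reduce to the connective case, where the affine projection $p:\bE_X[n]\to X$ gives the monadic identification $\QCoh(\bE_X[n])\simeq \Mod_{\QCoh(X)}(\cO_{\bE_X[n]})$ via Barr--Beck, and then transport across the shearing using the identification $\cO_{\bE_X[n]}\bbb{-n}\simeq \cO_{\bE_X}$ and de-equivariantization. Your write-up merely makes explicit two points the paper leaves implicit --- the sign check for odd $n$ and the transitivity argument for independence of the choice of $n$ --- which is a reasonable elaboration rather than a different method.
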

\begin{proof}
We apply Barr-Beck to the adjoint pair $(p^*, p_*)$ where $p: \bE_X \rightarrow X$.  When $\bE_X$ is connective, the monad $p_*p^*(-) \simeq \cO_{\bE_X} \otimes -$, and the claim follows by the usual arguments.  When $\bE_X$ is not connective, choose $[n]$ such that it is connective.  Consider the adjoint pair $(\wt{p}^*, \wt{p}_*)$ where $\wt{p}: \bE_X[n]/\bG_m \rightarrow X \times B\bG_m$.  The monad $\wt{p}^*\wt{p}_*$ is given by tensoring with $\cO_{\bE_X[n]}$, where we view it with its internal weight grading, inside the category $\QCoh(X \times B\bG_m)^{\bbb{-n}}$.  On de-equivariantization $\cO_{\bE_X[n]}$ becomes $\cO_{\bE_X[n]}{\bbb{-n}} \simeq \cO_{\bE_X}$.
\end{proof}

\subsubsection{}  We are also interested in studying certain small subcategories.
\begin{defn}
We first consider the non-graded setting.
\begin{enumerate}
\item The category $\Perf(\bE_X) \subset \QCoh(\bE_X)$ is the full subcategory consisting of objects $\cE$ such that for any $\eta: \Spec R \rightarrow X$, the pullback $\eta^*\cE$ is in the idempotent completion of the pretriangulated closure of the free object $\eta^*\cO_{\bE_X}$.
\item The category $\Coh_X(\bE_X) = \Coh(\wh{\bE}_X) \subset \IndCoh(\wh{\bE}_X)$ is the smallest stable idemopotent-complete category containing the essential image of $\Coh(X)$ under $z_*$, where $z: X \hookrightarrow \bE_X$.
\end{enumerate}
In the graded setting, we define them to be the full subcategories consisting of objects which de-equivariantize to objects in the categories defined below.  When $\bE_X$ is connective, these definitions are evidently compatible with the usual definitions.
\end{defn}

We are also interested in defining various t-structures on these categories corresponding to the standard t-structures as well as sheared versions in the equivariant setting.
\begin{defn}\label{shear tstructure}
The non-graded category $\QCoh(\bE_X)$ is equipped with a standard t-structure characterized by the property that functor $p_*: \QCoh(\bE_X) \rightarrow \QCoh(X)$ is left $t$-exact; this agrees with the usual t-structure in the connective case.   In the graded case we may define a family of \emph{$n$-sheared t-structure} for each $n \in \bZ$ as follows.  On $\QCoh(X \times B\bG_m)$, we define the $n$-sheared t-structure to be the unique t-structure such that the $\bbb n$-twisted $\QCoh(B\bG_m)$-action and $q^*: \QCoh(X) \rightarrow \QCoh(X \times B\bG_m)$ are t-exact, where $q: X \times B\bG_m \rightarrow X$ is the projection.  We define the $n$-sheared t-structure on $\QCoh(\bE_X/\bG_m)$ to be the unique t-structure such that the functor $p_*: \QCoh(\bE_X/\bG_m) \rightarrow \QCoh(X \times B\bG_m)$ is left t-exact\footnote{It is a general fact for any right adjoint functor $G: \cat{D} \rightarrow \cat{C}$ between cocomplete categories and t-structure on $\cat{C}$, one can define a unique t-structure on $\cat{D}$ characterized by the property that $G$ is left t-exact.} for the $n$-sheared t-structure on $\QCoh(B\bG_m)$; this agrees with the usual t-structure when $n=0$ in the connective case.
\end{defn}

\begin{rmk}
One may characterize the connective and coconnective objects under these sheared t-structures in the following way.   Suppose we are given a presentable category $\cat{C}$ with a t-structure compatible with filtered colimits, and a pair of colimit-preserving adjoint functors $\begin{tikzcd} F: \cat{C} \arrow[r, shift left] & \cat{D} :G. \arrow[l, shift left] \end{tikzcd}$  Then there is a unique t-structure on $\cat{D}$ subject to the condition that $G$ is left t-exact.  Since $G$ is conservative this completely determines the coconnective objects, and the connective objects are generated by the image of $\cat{C}^{\leq 0}$ under $F$.
\end{rmk}

\subsubsection{}\label{koszul resolution} We now define the Koszul resolution, and the notion of Koszul dual vector bundles.  Let $\bE_X$ be a derived vector bundle.  There is a natural dg locally semifree resolution of the augmentation module $\cO_X$ given by
$$\cK_{\bE_X} := \Sym_{\cO_X}(\mathrm{cone}(\mathrm{id}_{\cE^\vee})) \rightarrow \cO_X.$$
By construction, i.e. since $\cone(\mathrm{id}_{\cE^\vee}) \simeq 0$, the map is a quasi-isomorphism, and a semi-free\footnote{Let us recall how to formulate the notion of semi-free without reference to dg categories or model structures.  By semi-free, we mean that $\cK_{\bE_X}$ is a colimit of locally free $\cO_{\bE_X}$-modules $\cO_{\bE_X} \otimes_{\cO_X} \cP$.  A semi-free presentation of a module $\cM = \colim \cO_{\bE_X} \otimes_{\cO_X} \cP_\alpha$ allows for a calculation of $\intHom_{\cO_{\bE_X}}(\cM, -) \simeq \intHom_{\cO_{\bE_X}}(\colim \cO_{\bE_X} \otimes_{\cO_X} \cP, -) \simeq \lim \cP \otimes_{\cO_X} -$, i.e. using the tautological properties that $\Hom$ commutes colimits in the source into limits, and that the relative tensor product on free modules is the forgetful functor.  The Koszul complex is semi-free since $\cone(\mathrm{id}_{\cE^\vee})$ may be endowed with a two-step filtration $0 \subset \cE \subset \cone(\mathrm{id}_{\cE^\vee})$, inducing a filtered algebra structure on $\cK_{\bE_X}$; in particular, a $\cO_{\bE_X}$-algebra structure since the filtration specifies $\Symp \cE^\vee \simeq \cO_{\bE_X}$ as a subalgebra.} complex of $\cO_{\bE_X}$-modules.  Alternatively, or more explicitly,
$$\cK_{\bE_X} \simeq \Sym_{\bE_X} (\cE^\vee[1] \otimes_{\cO_X} \cO_{\bE_X}), \;\;\;\;\;\;\; d(s) = s \in \Symp_{\bE_X}^0 \cE^\vee[1] = \cO_{\bE_X} \text{ for } s \in \cE^\vee[1]$$
with the rest of the differentials defined via the Leibniz rule.  Since $\bE_X$ was assumed to be connective, $\cK_{\bE_X}$ is finite rank in each fixed cohomological degree and internal weight, and the graded $\cO_{\bE_X}$-linear dual of $\cK_{\bE_X}$ is the interpolating complex:
$$\cR_{\bE_X} := \Sym_{\cO_X} (\cE^\vee \oplus \cE[\ng1]), \;\;\;\;\;\;\; d(s) = \eta(1)s$$
where $\eta: \cO_X \rightarrow \cE^\vee \otimes_{\cO_X} \cE[1]$ is the coevaluation map (see the definition of the complex $\cA(\cT)$ in \cite[\textsection 1.2]{MR2}).  In particular, we may compute the graded endomorphisms:
$$R\underline{\intHom}_{\bE_X}(\cO_X, \cO_X) \simeq \cR_{\bE_X} \otimes_{\cO_{\bE_X}} \cO_X \simeq \Sym_X \cE[\ng1] = \cO_{\bE^*_X[1]}.$$
Furthermore, performing the same calculation for $\bE^*_X[1]$ gives back $\bE_X$ (in particular $\cR_{\bE_X} = \cR_{\bE^*_X[1]}$); this motivates the following definition, introduced primarily for notational convenience. We also define a Tate-unsheared version which appears in $\bG_m$-equivariant contexts.

\begin{defn}
Let $\bE_X$ be a derived vector bundle with the standard (weight 1) contracting $\bG_m$-action.  We define the \emph{Koszul dual} vector bundle to $\bE_X = (X, \cE)$ to be $\kz{\bE}_X := \bE_X^*[1] = (X, \cE^\vee[1])$, with $\cO_{\bE_X^*[1]} = \Sym_X \cE[\ng1]$, which has the standard (weight $-1$) expanding $\bG_m$-action.  We define the \emph{sheared} or \emph{regraded Koszul dual} \cite[\textsection 1.7]{MR2} to be $\kzz{\bE}_X := \bE_X^*[\ng1] = (X, \cE^\vee[\ng1])$.
\end{defn}

\subsubsection{} The following result appears as \cite[Thm. 1.6.1, 1.7.1]{MR2}; we provide a proof in our set-up for convenience.

\begin{thm}[Koszul duality for formal vector bundles]\label{kd graded no functoriality}
Let $X$ be a smooth QCA stack and $\bE_X$ a derived vector bundle with the standard contracting $\bG_m$-action, and $z: X \hookrightarrow \bE_X$.  The functor $z^!$ induces an equivalence on graded categories:
$$\begin{tikzcd}[column sep=20ex] \IndCoh_X(\mathbb{E}_X/\bG_m) \arrow[r, "\simeq"', "{\kappa = z^!}", shift left=1ex] & \arrow[l, shift left=1ex, "{- \tens{\cO_{\kz{\bE}_X}} \cO_X}"]  \QCoh(\kz{\bE}_X/\bG_m) \arrow[r, "\simeq"', "{\bbb{\ng2} = \unshear}", shift left=1ex] & \QCoh(\kzz{\bE}_X/\bG_m) \arrow[l, shift left=1ex, "{\bbb{2} = \shear}"] \end{tikzcd}$$
such that $\kappa(z_*\cO_X) \simeq \cO_{\kz{\bE}_X}$, and likewise for non-graded categories.
\end{thm}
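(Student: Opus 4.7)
The plan is to realize both sides as module categories over the algebra object $\cO_{\kz{\bE}_X} = \Sym_X \cE[\ng1] \in \QCoh(X \times B\bG_m)$, via a generator-and-endomorphism-algebra argument. Specifically, I would show that $z_*\cO_X$ generates $\IndCoh_X(\bE_X/\bG_m)$ as a $\QCoh(X \times B\bG_m)$-module category and that its derived endomorphism algebra is $\cO_{\kz{\bE}_X}$, then conclude via monadic Barr--Beck (equivalently, a Schwede--Shipley / Morita style theorem) applied to the adjunction $(z_*, z^!)$.

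The generation is essentially tautological from the definitions: $\Coh_X(\bE_X)$ is the thick idempotent-complete closure of $z_*\Coh(X)$, and for $X$ smooth we have $\Coh(X) = \Perf(X)$, so the projection formula $z_*(\cF \otimes z^*\cG) \simeq z_*\cF \otimes \cG$ shows that $z_*\cO_X$ generates $\Coh_X(\bE_X)$ as a $\Perf(X)$-module category, hence $\IndCoh_X(\bE_X)$ as a $\QCoh(X)$-module category; the $\bG_m$-equivariant version is the same argument applied weight-graded. The endomorphism computation is precisely the one already recorded in Section \ref{koszul resolution}: the interpolating complex $\cR_{\bE_X}$, the graded $\cO_{\bE_X}$-linear dual of the semifree Koszul resolution $\cK_{\bE_X} \to \cO_X$, gives
$$\REnd_{\IndCoh_X(\bE_X)}(z_*\cO_X) \simeq \cR_{\bE_X} \otimes_{\cO_{\bE_X}} \cO_X \simeq \Sym_X \cE[\ng1] = \cO_{\kz{\bE}_X},$$
with the $\bG_m$-weight coming from the expanding action on $\kz{\bE}_X$.

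By this identification, the functor $z^!\cF \simeq \RHom(z_*\cO_X, \cF)$ factors naturally through $\cO_{\kz{\bE}_X}$-modules, defining the candidate $\kappa$. It preserves colimits (its left adjoint $z_*$ preserves compactness by properness of $z$) and is conservative on $\IndCoh_X(\bE_X/\bG_m)$ (since $z_*\cO_X$ generates, any $\cF$ with $z^!\cF = 0$ has no maps in from the generating set and so is zero). Monadic Barr--Beck then produces the equivalence with $\Mod_{\cO_{\kz{\bE}_X}}(\QCoh(X \times B\bG_m)) = \QCoh(\kz{\bE}_X/\bG_m)$ (Proposition \ref{shear cat prop}); its inverse $M \mapsto M \tens{\cO_{\kz{\bE}_X}} \cO_X$ is the left-adjoint free-module functor, realized concretely by tensoring with the Koszul bimodule $\cR_{\bE_X}$ and invoking its $\cO_{\bE_X}$-module structure. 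The Tate-sheared statement is then immediate from Definition \ref{cat def}, since by construction $\QCoh(\kzz{\bE}_X/\bG_m)$ is defined precisely as the $\bbb{\ng2}$-shearing of $\QCoh(\kz{\bE}_X/\bG_m)$. The non-graded equivalence follows either by running the same argument without $\bG_m$-equivariance, or by de-equivariantizing.

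The main technical obstacle is making the Morita reconstruction work uniformly over a smooth QCA stack rather than an affine base, i.e. ensuring the $\QCoh(X)$-linear enrichment of $\REnd(z_*\cO_X)$ is computed correctly as a sheaf of algebras on $X$ and that the resulting module category matches the descent-theoretic definition of $\QCoh(\kz{\bE}_X)$. This should be extracted either from smooth descent for both sides along an atlas of $X$—reducing to the classical affine case of linear Koszul duality—or by working directly with the $\QCoh(X \times B\bG_m)$-enriched structure on $\IndCoh_X(\bE_X/\bG_m)$, where the QCA hypothesis provides the compact generation needed to apply Barr--Beck in the presentable setting.
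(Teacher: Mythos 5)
Your proposal is correct and follows essentially the same route as the paper: a monadic Barr--Beck argument for the adjunction $(z_*, z^!)$, with conservativity of $z^!$ coming from generation by $z_*\cO_X$, continuity from compactness of $z_*\cO_X$ (via the QCA hypothesis), and identification of the monad $z^!z_*$ with $\cO_{\kz{\bE}_X}\otimes_{\cO_X}-$ via the Koszul resolution of Section \ref{koszul resolution}, then shearing and de-equivariantization for the remaining statements. The Morita/Schwede--Shipley framing and the closing worries about enrichment over the stacky base are just a reformulation of what the paper packages into Proposition \ref{shear cat prop} and the Barr--Beck--Lurie step.
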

\begin{proof}
We assume that $\bE_X$ is connective; we deduce the non-connective case by shearing (see Definition \ref{cat def}), and the non-graded statements by de-equivariantization.  Let $z: X \hookrightarrow \bE_X$ denote the inclusion of the zero section, and consider the adjoint pair 
$$z_*: \begin{tikzcd} \QCoh(X) = \IndCoh(X) \arrow[r, shift left] & \arrow[l, shift left] \IndCoh(\wh{\bE}_X) \end{tikzcd}: z^!.$$  
By construction, $z^!$ is conservative since the essential image of $z_*$ generates $\IndCoh_X(\bE_X)$.  Thus $z^!$ is monadic.  Since $X$ is QCA, $\cO_X$ is compact in $\IndCoh(X)$, thus $z_*\cO_X \in \IndCoh(\bE_X/\bG_m)$ is compact and we may identify the monad using the Koszul resolution
$$z^!z_*(-) \simeq \mathcal{H}om_{\mathbb{E}_X}(\OO_X, -) \simeq  \mathcal{E}nd_{\bE_X}(\cO_X) \otimes_{\cO_X} - \simeq \cO_{\kz{\bE}_X} \otimes_{\cO_X} -.$$
By Barr-Beck-Lurie and Proposition \ref{shear cat prop} we have an equivalence $\IndCoh_X(\bE_X/\bG_m) \simeq \QCoh(\kz{\bE}_X/\bG_m)$ as desired.
\end{proof}


\begin{exmp}\label{non graded kd exmp}
Take $X = \Spec k$ and $\bE_X = \bA^1[1]$, i.e. the \emph{coconnective} derived vector bundle attached to $k[1]$ so that $\cO_{\bE_X} = k[\eta]$ with $|\eta| = 1$.  The Koszul dual bundle is $\kz{\bE}_X = \bA^1$.  Here, the Koszul complex has infinite rank in degree 0, so that in the non-graded setting we have $R\intHom_{\bE_X}(k, k) \simeq k[[x]]$ with $|x| = 0$, while in the graded setting we have $R\intHom_{\bE_X}(k, k) \simeq k[x]$.  Theorem \ref{kd graded no functoriality} tells us that
$$\IndCoh(\wh{\bA}^1[1]) \simeq \QCoh(\bA^1)$$
which may set off some alarm bells: the left-hand side appears to only have one simple object, while the right-hand side has many skyscraper objects.  In fact, this is not a mistake: the left-hand side \emph{does} have all these skyscraper objects, a feature of our definition by de-equivariantization. 
\end{exmp}

\subsubsection{}\label{sec kd graded functorial} We are interested in functorial properties of Koszul duality, as introduced in \cite[\textsection 2, 3]{MR}; we will summarize their results for convenience. There are two kinds of maps between derived vector bundles we wish to consider.
\begin{enumerate}
\item A \emph{linear morphism} $\delta: \bE_X \rightarrow \bF_X$ between two derived vector bundles on $X$ is defined by a map of perfect complexes $\delta^\flat: \cF^\vee \rightarrow \cE^\vee$, inducing a (graded) map on structure sheaves $\cO_{\bF_X} \rightarrow \cO_{\bE_X}$.  In the connective case, this morphism of derived stacks is affine.
\item A \emph{base-change morphism} $\pi: \bE_X = \bE_Y \times_Y X \rightarrow \bE_Y$ is defined by a map of derived stacks $\pi_0: X \rightarrow Y$.  In the connective case, this morphism of derived stacks is the base change along $\pi_0$.
\end{enumerate}

\begin{exmp}
The inclusion of the zero section $z: X \hookrightarrow \bE_X$ can be realized as a linear morphism given by the map of perfect complexes $\cE^\vee \rightarrow 0$, while the projection $p: \bE_X \rightarrow X$ can be realized via the map $0 \rightarrow \cE^\vee$.
\end{exmp}

There are two ways to assemble these two types of morphisms.  The basic example is that morphisms between tangent bundles are covariant, while morphisms between cotangent bundles are contravariant.
\begin{defn}\label{map of dvect}
We define a \emph{covariant morphism of derived vector bundles} $\phi: \bE_X \rightarrow \bF_Y$ to be the composition linear morphism followed by a base-change morphism, i.e. 
$$\begin{tikzcd} \bE_X \arrow[r, "\delta"] & \bF_X = \bF_Y \times_Y X \arrow[r, "\pi"] & \bF_Y\end{tikzcd}$$
i.e. explicitly, a pair $(\pi_0, \delta^\flat)$ where $\pi_0: X \rightarrow Y$ and $\delta^\flat: \pi_0^* \cF^\vee \rightarrow \cE^\vee$ is a map in $\QCoh(X)$.  We define a \emph{contravariant morphism of derived vector bundles} $\phi: \bE_X \rightarrow \bF_Y$ to be a correspondence
$$\begin{tikzcd}\bE_X & \arrow[l, "\delta"'] \arrow[r, "\pi"] \bF_X  = \bF_Y \times_Y X& \bF_Y \end{tikzcd}$$
where $\pi$ is base-changed from $\pi_0: X \rightarrow Y$ and $\delta$ is linear.  These morphisms may be composed
$$\begin{tikzcd}
\bD_W \arrow[r] & \bE_W \arrow[r, dotted] \arrow[d] & \bF_W \arrow[d, dotted] & & \bF_W \arrow[r, dotted] \arrow[d, dotted] & \bE_W \arrow[r] \arrow[d] & \bD_W \\
& \bE_X \arrow[r] & \bF_X \arrow[d] &  &\bF_X \arrow[r] \arrow[d] & \bE_X \\
& & \bF_Y &  &\bF_Y & & 
\end{tikzcd}$$
since the middle square commutes in the covariant case, and by base change since the middle square is Cartesian in the contravariant case.  In particular, the category of derived vector bundles assembles into a two different $\infty$-categories: the covariant $\cat{DVect}_k$ and the contravariant $\cat{coDVect}_k$.
\end{defn}

\subsubsection{} There is a correspondence between maps of derived vector bundles under Koszul duality, where base-change morphisms correspond covariantly, while linear morphisms correspond contravariantly, i.e. a linear morphism $\delta: \bE_X \rightarrow \bF_X$ becomes a linear morphism $\kz{\delta}: \kz{\bF}_X \rightarrow \kz{\bE}_X$, while a base-change morphism $\pi: \bE_X \rightarrow \bE_Y$ induced by $\pi_0: X \rightarrow Y$ becomes a base-change morphism $\kz{\pi}: \kz{\bE}_X \rightarrow \kz{\bE}_Y$ induced from the same morphism.  The following observation is immediate.
\begin{prop}\label{kd functor}
The Koszul duality functor $\bE_X \mapsto \kz{\bE}_X$ (and likewise the sheared Koszul duality functor $\bE_X \mapsto \kzz{\bE}_X$) define functors:
$$\kz{(-)}: \cat{DVect}_k \rightarrow \cat{coDVect}_k, \;\;\;\;\;\;\;\;\;\; \kzz{(-)}: \cat{DVect}_k \rightarrow \cat{coDVect}_k.$$
\end{prop}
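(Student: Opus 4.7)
The plan is to unpack the definitions and verify directly that $\kz{(-)}$ respects identities and compositions. On objects, the Koszul duality $\bE_X = (X,\cE) \mapsto (X,\cE^{\vee}[1])$ is the endofunctor $\cE \mapsto \cE^{\vee}[1]$ on $\Perf(X)$, which is manifestly natural in $X$ since both dualization and shift on perfect complexes commute with pullback. All the content is therefore in the action on morphisms, and I would organize the verification around the two morphism types from Definition \ref{map of dvect}.

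First, I would treat each morphism type separately. A linear morphism $\delta : \bE_X \to \bF_X$ encoded by $\delta^{\flat}: \cF^{\vee} \to \cE^{\vee}$ is sent to the linear morphism $\kz{\delta} : \kz{\bF}_X \to \kz{\bE}_X$ encoded on linear functionals by $(\delta^{\flat})^{\vee}[\ng 1] : \cE[\ng 1] \to \cF[\ng 1]$; this reverses direction and is compatible with composition because dualization and shift on perfect complexes are themselves functorial. A base-change morphism induced by $\pi_0 : X \to Y$ is sent to the base-change morphism induced by the same $\pi_0$, using the natural identification $\pi_0^{*}(\cE^{\vee}[1]) \simeq (\pi_0^{*}\cE)^{\vee}[1]$; composition of base-change morphisms is tautologically preserved. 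The essential intertwining to check is that the Koszul dual of the base change of a linear morphism equals the base change of its Koszul dual, i.e. $\kz{(\delta_W)} \simeq (\kz{\delta})_W$ for $\pi_0 : W \to X$, which again reduces to the pullback-compatibility of $\cE \mapsto \cE^{\vee}[1]$. With this in hand, the functoriality on general (co)variant morphisms follows: the Cartesian middle square that assembles two composable morphisms in Definition \ref{map of dvect} is sent by $\kz{(-)}$ to the corresponding Cartesian middle square assembling the Koszul duals into a contravariant composition. The sheared version $\kzz{(-)}$ differs only by a weight shift applied termwise, which is a functorial operation on $\QCoh(B\bG_m)$-module categories by Definition \ref{def shear}, so the same argument applies.

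The main obstacle, insofar as there is one, is bookkeeping of higher-coherence data at the $\infty$-categorical level. The cleanest way to bypass this is to present $\cat{DVect}_k$ (respectively $\cat{coDVect}_k$) as the total space of a suitable (co)Cartesian fibration over the $\infty$-category of prestacks whose fiber over $X$ is built from $\Perf(X)$, and to realize Koszul duality as the functor induced by the fiberwise self-equivalence $\cE \mapsto \cE^{\vee}[1]$ on $\Perf(X)$; functoriality is then automatic from its compatibility with the (co)Cartesian structure, which is the pullback compatibility already noted.
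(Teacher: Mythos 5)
Your proposal is correct and matches the paper's treatment: the paper records this statement as an ``immediate observation'' following the description of how linear morphisms dualize contravariantly via $\delta^\flat \mapsto (\delta^\flat)^\vee[\ng1]$ and base-change morphisms covariantly via the same $\pi_0$, which is exactly the content you verify. Your additional suggestion to package the higher coherences by presenting $\cat{DVect}_k$ as a fibration over prestacks and realizing $\kz{(-)}$ fiberwise is a reasonable way to make precise what the paper leaves implicit, but it is an elaboration of the same argument rather than a different route.
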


\subsubsection{} Our definitions of $\QCoh(\bE_X), \IndCoh_X(\bE_X)$ in Definition \ref{cat def} allow for easy definitions of various functors.
\begin{defn}
Let $\bE_X, \bE_Y, \bF_X$ be connective derived vector bundles with compatible weight gradings.  We will consider the following functors, for $\pi: \bE_X/\bG_m \rightarrow \bE_Y/\bG_m$ and $\delta: \bE_X/\bG_m \rightarrow \bF_X/\bG_m$:
$$\pi^*: \QCoh(\bE_Y/\bG_m) \rightarrow \QCoh(\bE_X/\bG_m), \;\;\;\;\;\;\;\;\;\;\;\;\; \pi_*: \QCoh(\bE_X/\bG_m) \rightarrow \QCoh(\bE_Y/\bG_m),$$
$$\delta^*: \QCoh(\bF_X/\bG_m) \rightarrow \QCoh(\bE_X/\bG_m), \;\;\;\;\;\;\;\;\;\; \delta_*: \QCoh(\bE_X/\bG_m) \rightarrow \QCoh(\bF_X/\bG_m),$$
$$\pi_*: \IndCoh_X(\bE_X/\bG_m) \rightarrow \IndCoh_X(\bE_Y/\bG_m), \;\;\;\;\;\;\;\;\;\; \pi^!: \IndCoh_Y(\bE_Y/\bG_m) \rightarrow \IndCoh_X(\bE_X/\bG_m),$$
$$\delta_*: \IndCoh_X(\bE_X/\bG_m) \rightarrow \IndCoh_X(\bF_X/\bG_m), \;\;\;\;\;\;\;\;\;\; \delta^!: \IndCoh_X(\bF_X/\bG_m) \rightarrow \IndCoh_X(\bE_X/\bG_m)$$
which are defined in the usual way \cite{GR}.  In the non-connective case we define them by shearing, and in the non-graded setting, we define them by de-equivariantization.  When $\pi_0: X \rightarrow Y$ is finite Tor dimension, we may define $\pi^!$ on $\QCoh$ and $\pi^*$ on $\IndCoh_X$ similarly.
\end{defn}

For $\delta: \bE_X \rightarrow \bF_X$, the following relative version of the Koszul resolution (defined in Section \ref{koszul resolution}) is useful for computing the functors $\delta^*: \QCoh(\bF_X) \rightarrow \QCoh(\bE_X)$ and $\delta^!: \IndCoh(\wh{\bF}_X) \rightarrow \IndCoh(\wh{\bE}_X)$ explicitly since
$$\delta^*(-) = - \otimes^L_{\cO_{\bF_X}} \cO_{\bE_X}, \;\;\;\;\;\;\;\;\;\; \delta^!(-) = R\intHom_{\cO_{\bF_X}}(\cO_{\bE_X}, -).$$
Note that since we work with formal completions in the ind-coherent setting, the functors $(\delta_*, \delta^!)$ are always adjoint (leading to the formula for $\delta^!$).
\begin{defn}[Generalized Koszul resolution]\label{generalized koszul resolutions} 
Let $\delta: \bE_X \rightarrow \bF_X$ be a linear morphism given by a map of perfect complexes $\delta^\flat: \cF^\vee \rightarrow \cE^\vee$; we will define a $\cO_{\bF_X}$-free resolution of $\cO_{\bE_X}$.  Define $\wt{\cE} ^\vee = \cone(\fib(\delta^\flat) \rightarrow \cF^\vee)$ equipped with its canonical equivalence $\begin{tikzcd}[column sep=small] \wt{\cE}^\vee \arrow[r, "\simeq"]& \cE^\vee\end{tikzcd}$.   
Define the \emph{generalized Koszul resolution} attached to $\delta$ by
$$\cK_\delta := \Sym_{\cO_X} \wt{\cE}^\vee \rightarrow \cO_{\bE_X}.$$
By construction, this map is a quasi-isomorphism, and $\cK_\delta$ is a semi-free complex of $\cO_{\bF_X}$-modules.  
\end{defn}

\begin{exmp}
The Koszul resolution from Section \ref{koszul resolution} is the special case of the generalized Koszul resolutions in Definition \ref{generalized koszul resolutions} corresponding to the inclusion of the zero section $z: X \hookrightarrow \bE_X$, which corresponds on the Koszul dual side to the projection $p: \kz{\bE}_X \rightarrow X$.  
\end{exmp}

\begin{exmp}
Consider the map $p: \wh{\bE}_X \rightarrow X$ where $\bE_X$ is classical of rank $r$.  Viewing $p$ as a map of derived vector bundles, the functor $p^!$ is computed by the graded dual
$$p^!\cM \simeq \intHom_X(\Sym_X \cE^\vee, \cM) \simeq \Sym_X \cE \otimes_{\cO_X} \cM.$$
Viewing $p$ as a map of prestacks, the functor $p^!$ is defined via the composition $p^!: \IndCoh(X) \rightarrow \IndCoh(\bE_X) \rightarrow \IndCoh(\wh{\bE}_X)$, i.e. the $!$-pullback along $\bE_X \rightarrow X$ followed by the local cohomology functor.  The $!$-pullback takes $\cM \in \IndCoh(X)$ to $\Sym_X \cE^\vee \otimes_{\cO_X} \det(\cE^\vee)[r] \otimes_{\cO_X} \cM$ (i.e. $p^*$ twisted by the relative dualizing bundle), then computing the local cohomology 
gives the same result as above.  In the non-graded setting, we cannot use the relative Koszul complex to compute the functor directly due to convergence issues; the correct functor is computed by de-equivariantization, and likewise agrees with the functor coming from prestacks.
\end{exmp}

\subsubsection{} We now establish functoriality properties of Koszul duality with respect to smooth morphisms.  Let $\cat{QCA}_k$ denote the category of QCA stacks with morphisms given by representable functors, $\cat{smQCA}_k$ the full subcategory of smooth QCA stacks.  We use the superscript $\mathrm{sm}$ to indicate we only take smooth morphisms, and the superscript $+$ to indicate we only take eventually coconnective morphisms \cite[\textsection I.4 Def. 3.1.2]{GR}.
\begin{prop}
We have functors of $\infty$-categories
$$(\IndCoh_X, \delta^!\pi^!): \cat{DVect}(\cat{QCA}_k)^{\opp} \longrightarrow \cat{Pr}^L_k,$$
$$(\QCoh, \delta_*\pi^!): \cat{DVect}(\cat{QCA}_k^+)^{\opp} \longrightarrow \cat{Pr}^L_k,$$
in both the graded and non-graded settings.
\end{prop}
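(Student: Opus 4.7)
The plan is to reduce to the case of connective derived vector bundles by shearing, and then produce the functors either by precomposition with the forgetful functor to prestacks (for $\IndCoh$) or by passing through a correspondence category (for $\QCoh$). By Definition \ref{cat def}, for any derived vector bundle the categories $\IndCoh_X(\bE_X)$ and $\QCoh(\bE_X)$ are defined as the $\bbb{-n}$-shearing of the corresponding categories for the connective bundle $\bE_X[n]$, and since shearing is an autoequivalence of $\QCoh(B\bG_m)$-module categories that commutes with pullback and pushforward operations, it suffices to work with connective derived vector bundles. In that case each $\bE_X$ is an honest derived stack affine over $X$, and a covariant morphism $\phi = \pi\circ\delta: \bE_X \to \bF_X \to \bF_Y$ is simply a morphism of prestacks with a specified factorization into linear and base-change parts.

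For $\IndCoh$, the assignment $\phi \mapsto \delta^!\pi^! = (\pi\circ\delta)^!$ is just the $!$-pullback along the underlying morphism of prestacks, so the desired functor factors as the forgetful functor $\cat{DVect}(\cat{QCA}_k)^\opp \to \cat{PreStk}^\opp$ followed by Gaitsgory--Rozenblyum's $\IndCoh^!: \cat{PreStk}^\opp \to \cat{Pr}^L_k$. For the support-restricted variant $\IndCoh_X(\bE_X) \hookrightarrow \IndCoh(\bE_X)$, one observes that covariant morphisms carry zero sections to zero sections (both $\delta$ and $\pi$ do), so $!$-pullback preserves support along the zero section, inducing the desired functor on the support subcategory.

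For $\QCoh$, the formula $\delta_*\pi^!$ mixes affine pushforward with exceptional pullback, so I would package it through a correspondence category. Build a functor $\cat{DVect}(\cat{QCA}_k^+)^\opp \to \cat{Corr}(\cat{PreStk})$ sending $\phi = \pi\circ\delta$ to the correspondence
\[\begin{tikzcd}[column sep=small] & \bF_X \ar[dl, "\pi"'] \ar[dr, "\delta"] & \\ \bF_Y & & \bE_X \end{tikzcd}\]
with $\pi$ marked as the $!$-pullback leg and $\delta$ as the $*$-pushforward leg, then compose with the two-sided $\QCoh$ formalism on correspondences to obtain $\delta_*\pi^!$. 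Composition of covariant morphisms corresponds to correspondence composition via the Cartesian square $\bE_W \to \bF_W$ lying over $\bE_X \to \bF_X$ with $\bF_W = \bF_X \times_X W$; this square is admissible because base-changing a linear morphism remains linear (the pullback of a perfect complex is perfect). The required base-change isomorphism $\pi^!\delta_* \simeq \wt{\delta}_*\wt{\pi}^!$ holds because $\delta$ is affine and $\pi$ is eventually coconnective by our restriction to $\cat{QCA}_k^+$.

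The main technical obstacle will be assembling the correspondence-valued functor coherently as an $\infty$-functor. One must verify the admissibility conditions of the GR machinery: that the classes of linear morphisms and base-change morphisms form a bi-admissible pair (closed under base change along each other) and that the resulting Cartesian squares are sent to compatible squares. These conditions are essentially tautological from the definitions, since linear morphisms over a fixed base are closed under pullback and base-change morphisms are closed under composition, but tracking the higher coherences requires care. Once the framework is in place, the graded versions of both functors arise automatically from the shearing reduction, since the $\bG_m$-equivariant structures are manifestly compatible with affine pushforward and base-change.
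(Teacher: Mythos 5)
Your overall route matches the paper's: reduce to connective bundles by shearing, obtain the $\IndCoh$ functoriality from the $!$-pullback formalism of \cite{GR} applied to the underlying prestacks, and obtain the $\QCoh$ functoriality from base change (the paper cites \cite[Prop.~7.2.9]{indcoh} together with ``a standard descent argument'' where you invoke the correspondence formalism of \cite[\textsection I.7]{GR}; these are two packagings of the same input, and yours is arguably the more explicit one). There are, however, two places where your argument as written has genuine gaps.

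First, your justification for the support-restricted categories is not correct as stated. The fact that covariant morphisms carry zero sections to zero sections gives you an induced map of formal completions $\wh{\phi}\colon \wh{\bE}_X \to \wh{\bF}_Y$, but it does \emph{not} imply that $\delta^!$ on the ambient categories preserves the condition of being supported on the zero section: the support of $\delta^!\cF$ is controlled by the \emph{preimage} $\delta^{-1}(X)$, which for a linear morphism can be strictly larger than the zero section (take $\delta^\flat = 0$, so that $\delta$ factors through the zero section; then $\delta^!$ of a skyscraper is supported everywhere). The functor $\delta^!\colon \IndCoh_X(\bF_X) \to \IndCoh_X(\bE_X)$ must instead be taken to be the $!$-pullback along $\wh{\delta}$, equivalently the right adjoint of $\delta_*$ on the completed categories --- i.e.\ the naive $\delta^!$ followed by local cohomology along the zero section; the paper flags exactly this point when introducing the functors. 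With that correction your factorization through $\IndCoh^!$ on prestacks (applied to formal completions rather than to the ambient bundles) does go through.

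Second, ``shearing is an autoequivalence commuting with everything, so it suffices to treat connective bundles'' is too quick to produce an $\infty$-functor, and coherence is the entire content of the proposition. Different objects of $\cat{DVect}$ require different shift amounts $n$, so one cannot shear the whole category at once; the paper writes $\cat{DVect}$ as a colimit of the subcategories $\cat{DVect}^{\geq -n}$, constructs the functor on each piece by twisting by the shearing automorphism $\bbb{-n}$, checks compatibility with the inclusions, and passes to the colimit. Relatedly, the order of your reductions is backwards: since the non-connective categories are \emph{defined} via graded shearing followed by de-equivariantization, one must prove the graded connective statement first and deduce the non-graded one by de-equivariantizing, rather than deriving the graded version at the end as your final sentence suggests.
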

\begin{proof}
By shearing and $\QCoh(B\bG_m)$-equivariance, it suffices to prove the claims under the assumption that the derived vector bundles are connective, and in the graded setting.  To deduce the non-connective case, we write $\cat{DVect}$ as a colimit of categories $\cat{DVect}^{\geq -n}$ (and likewise for $\cat{coDVect}$), which we construct functors out of by twisting by shearing automorphisms.  These functors compatible with the inclusions, so we can take the colimit.  To deduce the non-graded version we compose with the de-equivariantization functor.  Now, in the connective case, functoriality for $\IndCoh_X$ then follows by functoriality for $\IndCoh$, which is by construction.  For $\QCoh$, the claim follows by base change in \cite[Prop. 7.2.9]{indcoh} and a standard descent argument.
\end{proof}

\medskip

We now show that Koszul duality is functorially compatible with pullbacks.  We expect a more general functoriality, i.e. out of a category of correspondences \cite[\textsection I.7]{GR}, but postpone its discussion to the sequel, as we will not need it in this paper.  The following results also appear in  \cite[Prop. 2.1.1, 3.2.1]{MR2}.
\begin{thm}\label{kd graded functorial}\label{big functoriality prop}
There is a natural isomorphism commuting the diagram
$$\begin{tikzcd}[column sep=large]
\cat{DVect}(\cat{smQCA}_k) \arrow[r, "\IndCoh_X"] \arrow[d, "\kz{(-)}"']  & \cat{Pr}_k^L \arrow[d, "\kappa"] \\
\cat{coDVect}(\cat{smQCA}_k) \arrow[r, "\QCoh"] & \cat{Pr}_k^L
\end{tikzcd}$$
intertwining the Koszul duality functors of Proposition \ref{kd functor} and Theorem \ref{kd graded no functoriality}.  Furthermore, for any individual linear morphism $\delta$ or base change morphism $\pi$, $\kappa$ exchanges the following functors, which preserve the small subcategories appearing on their respective sides under the indicated conditions.
\begin{center}
\begin{tabular}{c|c|c}
$\Coh_X(\bE_X)$ & $\Perf(\kz{\bE}_X)$ & condition \\ \hline
$\pi^*$ & $\kz{\pi}^*$ & none \\
$\pi_*$ & $\kz{\pi}_*$ & $\pi_0$ proper \\
$\pi^!$ & $\kz{\pi}^!$ & none \\
$\delta_*$ & $\kz{\delta}^*$ & none \\
$\delta^!$ & $\kz{\delta}_*$ & $\cone(\delta^\flat)$ perfect in odd degrees
\end{tabular}
\end{center}
Furthermore, we have graded versions of all statements.
\end{thm}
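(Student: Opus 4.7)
The plan is to reduce to the graded connective case, assemble the pointwise equivalence of Theorem~\ref{kd graded no functoriality} into a coherent natural transformation, and then test the individual functor correspondences on a generator. As in the preceding proposition, shearing (Definition~\ref{cat def}) together with $\QCoh(B\G_m)$-(de)equivariantization reduces the statement to connective derived vector bundles in the graded setting; the non-connective case is handled by writing $\cat{DVect}$ as a filtered colimit of $\cat{DVect}^{\geq -n}$ and twisting by the shearing automorphisms compatibly. In this setting the Koszul duality functor $\kappa_X = z_X^!$ is monadic with inverse $-\otimes_{\cO_{\kz\bE_X}}\cO_X$, and carries the distinguished generator $z_{X,*}\cO_X$ to $\cO_{\kz{\bE}_X}$.

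For the natural isomorphism $(\IndCoh_X, \delta^!\pi^!) \cong (\QCoh, \delta_*\pi^!)\circ\kz{(-)}$, I would argue factorization-by-factorization using Definition~\ref{map of dvect}: every morphism in $\cat{DVect}(\cat{smQCA}_k)$ is a linear morphism followed by a base-change morphism, and the two $\infty$-functors being compared are determined by their behavior on these two classes together with compatibility on the shared intermediate object. On the individual functor correspondences, I would argue by testing on $z_{X,*}\cO_X$ and invoking Barr--Beck--Lurie: for a smooth base-change morphism $\pi: \bE_X \to \bE_Y$ induced by $\pi_0: X \to Y$, the identity $\pi^! z_{Y,*}\cO_Y \simeq z_{X,*}\pi_0^!\cO_Y$ (a base change in $\IndCoh$) translates under $\kappa$ into the analogous base change for $\QCoh$ on the Koszul dual side, which yields $\kz{\pi}^!\cO_{\kz{\bE}_Y}$ using smoothness of $\pi_0$; the adjoint correspondences $\pi_*\leftrightarrow\kz\pi_*$ and $\pi^*\leftrightarrow\kz\pi^*$ then follow by passing to adjoints. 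For a linear morphism $\delta: \bE_X \to \bF_X$, I would use the generalized Koszul resolution $\cK_\delta$ from Definition~\ref{generalized koszul resolutions} to compute $\delta^!$ on generators and explicitly match it with the pushforward $\kz\delta_*$ applied to $\cO_{\kz{\bF}_X}$; the case $\delta_*\leftrightarrow\kz\delta^*$ again follows by adjunction.

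The preservation-of-subcategories statements are verified on generators: $\Perf(\kz{\bE}_X)$ is the idempotent-closed stable subcategory generated by $\cO_{\kz{\bE}_X}$, which corresponds under $\kappa$ to the subcategory generated by $z_{X,*}\cO_X$ inside $\Coh_X(\bE_X)$, and this coincides with $\Coh_X(\bE_X)$ since $X$ is smooth and QCA so $\cO_X \in \Coh(X)$ generates. The stated conditions on $\pi_0$ (proper, for $\pi_*$) and on $\cone(\delta^\flat)$ (perfect in odd degrees, for $\delta^!$) are exactly what ensures that the relevant functor is computable through a finite Koszul-type resolution and therefore preserves coherence; the latter because $\Sym_X$ of a complex concentrated in odd degrees is exterior-algebraic and so finite-rank over $\cO_X$. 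The main obstacle will be organizing all of this into a coherent $\infty$-categorical natural transformation rather than a collection of pointwise equivalences: individual base-change isomorphisms are straightforward, but assembling them into a functor out of $\cat{DVect}(\cat{smQCA}_k)$ requires extracting higher coherence data from the framework of~\cite{GR}, most cleanly by realizing both $\IndCoh_X\circ(-)$ and $\QCoh\circ\kz{(-)}$ as restrictions of functors defined on an appropriate category of correspondences; it is for this reason that the full functoriality with respect to arbitrary pullback/pushforward is deferred to the sequel.
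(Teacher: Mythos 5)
Your proposal follows essentially the same route as the paper: reduce to the connective graded case by shearing and (de\nobreakdash-)equivariantization, pin down each functor correspondence by testing on the generators $z_*\cM$ via the monadic description of $\kappa$ (so that $\kappa(z_*\cM)\simeq \cO_{\kz{\bE}_X}\otimes_{\cO_X}\cM$), obtain the companion correspondences by passing to adjoints, and verify preservation of the small subcategories by base change against the zero sections together with the generalized Koszul resolution, whose underlying sheaf $\Symp_{\cO_{\bF_X}}(\cone(\delta^\flat))$ is perfect and bounded exactly when $\cone(\delta^\flat)$ is perfect in odd degrees. (The paper starts from $\pi^*$ and recovers $\pi_*$ by right adjoints and $\pi^!$ by a dualizing twist, whereas you start from $\pi^!$; these are interchangeable.) The one inaccuracy worth flagging is your claim that $\Coh_X(\bE_X)$ is the subcategory generated by $z_{X,*}\cO_X$ because ``$\cO_X$ generates $\Coh(X)$'': for a smooth QCA \emph{stack} such as $X=BG$ neither $\Coh(X)$ nor $\Perf(\kz{\bE}_X)$ is generated by the structure sheaf alone ($\Perf$ is defined by a local condition, not global generation). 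The correct generators are all of $z_*\Coh(X)$ --- which is literally the definition of $\Coh_X(\bE_X)$ --- and, Koszul dually, $\cO_{\kz{\bE}_X}\otimes_{\cO_X}\Coh(X)$; since $\kappa$ matches these generator classes, your argument goes through verbatim with this correction.
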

\begin{proof}
The claim for linear morphisms $\delta: \bE_X \rightarrow \bF_X$ follows since the map of monads $z^!z_* \rightarrow z^!\delta^!\delta_*z_*$ defining the functor $\kappa$ is exactly the map of algebras $\cO_{\kz{\bE}_X} \rightarrow \cO_{\kz{\bF}_X}$, identifying $\delta^!$ with $\kz{\delta}_*$.  The claim for $\delta_*$ follows by passing to left adjoints.  For a base-change morphism $\pi: \bE_X \rightarrow \bE_Y$ defined by $\pi_0: X \rightarrow Y$, the claim for $\pi^*$ follows from the observation that $\pi^*: \IndCoh_X(\bF_X) \rightarrow \IndCoh_X(\bE_X)$ is characterized by the property that it commutes with colimits and that $\pi^*(z_* \cM) \simeq z_* \pi_0^* \cM$.  Applying $\kappa$, this means that the corresponding Koszul dual functor should send $\cO_{\kz{\bE}_Y} \otimes_{\cO_Y} \cM$ to $\cO_{\bE_X} \otimes_{\cO_X} \pi_0^* \cM$, which characterizes the functor $\kz{\pi}^*$.  The claim for $\pi_*$ follows by passing to right adjoints.  The claim for $\pi^!$ follows similarly to that for $\pi^*$ by introducing a twist by the relative dualizing bundle $\omega_{X/Y}$.

We now address the claims regarding small categories.  It suffices to prove them on the covariant side.  Regarding $\pi$, note that we have Cartesian squares
$$\begin{tikzcd}
X \arrow[d, "\pi_0"] \arrow[r, "z"] & \bE_X \arrow[d, "\pi"]\\
Y \arrow[r, "z"] & \bE_Y.
\end{tikzcd}$$
For the pullback, we note that $\Coh_Y(\bE_Y)$ is generated by the essential image of $\Coh(Y)$ under $z_*$, and use base change on the left square.  The claim for pushforward follows by commutativity of the left square.  Regarding linear morphisms, the claim for pushforwards is evident.  The claim for $!$-pullbacks follows using the generalized Koszul resolution (Definition \ref{generalized koszul resolutions}), i.e. the generalized Koszul resolution is a semi-free $\cO_{\bF_X}$-resolution whose underlying sheaf is $\Symp_{\cO_{\bF_X}}(\cone(\delta^\flat))$, thus is perfect and bounded when $\cone(\delta^\flat)$ is perfect in odd degrees.
\end{proof}

\begin{rmk}
The condition that $\cone(\delta^\flat)$ is perfect in odd degrees is satisfied in the following important example.  Consider the case of a representable map $\pi: X \rightarrow Y$, and take $\bE_X = \bT_X[\ng1] \rightarrow \bE_Y = \bT_Y[\ng1]$.  This map is quasi-smooth if and only if $X \rightarrow Y$ is smooth, where $\cone(\delta^\flat) = \Omega^1_{X/Y}[1]$ is perfect in degree $-1$.  We also note that the conditions on $\cone(\delta^\flat)$ are fixed under Koszul duality.  That is, $\cone(\delta^\flat)$ and $\cone(\kz{\delta}^\flat)$ have the same Tor-amplitude parity, since $\cone(\kz{\delta}^\flat) = \cone(\delta^{\flat, \vee}[\ng1]) = \fib(\delta^\flat)^\vee[\ng1] = \cone(\delta^\flat)^\vee.$
\end{rmk}

\subsection{Renormalized linear Koszul duality}\label{ren linear kd}

We now arrive at our main point of departure from the linear Koszul duality of \cite{MR2}: we wish to avoid invocations of Grothendieck duality in our statements, which are employed in \emph{loc. cit.} to produce Koszul duality statements between categories of coherent sheaves on both sides.  The introduction of contravariance is inconvenient when working with large categories (i.e. one must consider pro-objects rather than ind-objects on one side), so our approach will be to \emph{renormalize} the category on the covariant side of Koszul duality so that, roughly, it contains injective objects instead of projective ones.  In the language of \cite{rnkd} we work with the coderived category rather than the contraderived category.

\subsubsection{} We are primarily interested in specializing to the case $\bE_X = \bT_X[\ng1] = \Spec_X \Sym_X \Omega^1_X[1]$ where $\Omega^1_X$ denotes the cotangent complex of $X$, i.e. the odd tangent bundle of \cite{loops and conns} (see Definition \ref{odd tangent defn}).  The Tor-amplitude of $\bT_X[\ng1]$ depends on how singular $X$ is and the degree of stackiness of $X$; we consider the following cases.
\begin{enumerate}
\item The derived vector bundle $\bE_X$ is perfect in degree $1$, i.e. the fibers are entirely in derived directions.  Then, the Koszul dual $\kz{\bE}_X$ is perfect in degree $-2$, and the sheared Koszul dual $\kzz{\bE}_X$ is perfect in degrees $0$.  The bundle $\bE_X$ is quasi-smooth over $X$, and $\kzz{\bE}_X$ is smooth over $X$  If we take $\bE_X = \bT_X[\ng1]$, this corresponds to the case where $X$ is a smooth scheme.
\item The derived vector bundle $\bE_X$ is perfect in degree $0$, i.e. is a vector bundle in the classical sense.  Then, the Koszul dual $\kz{\bE}_X$ is perfect in degree $-1$, and the sheared Koszul dual $\kzz{\bE}_X$ is perfect in degree $1$.  The bundle $\bE_X$ is smooth over $X$ and $\kzz{\bE}_X$ is quasi-smooth over $X$.  If we take $\bE_X = \bT_X[\ng1]$, this corresponds to the case where $X$ is a classifying stack. 
\item The derived vector bundle $\bE_X$ is perfect in degrees $[0,1]$, i.e. the common generalization of the above two.  Then, the Koszul dual $\kz{\bE}_X$ is perfect in degrees $[-2, -1]$, and the sheared Koszul dual $\kzz{\bE}_X$ is perfect in degrees $[0,1]$.  The bundle $\bE_X$ is quasi-smooth over $X$, and likewise for $\kzz{\bE}_X$.  If we take $\bE_X = \bT_X[\ng1]$, this corresponds to the case where $X$ is a smooth Artin 1-stack.
\end{enumerate}
We will soon impose the assumption that the derived vector bundle $\bE_X$ is perfect in degrees $[0, 1]$, i.e. \emph{relatively quasi-smooth} over $X$, which in the case of the odd tangent bundle corresponds to the case where $X$ is an Artin 1-stack.

\subsubsection{} We first discuss a certain enlargement of the category $\Coh(\wh{\bE}_X) \subset \IndCoh(\wh\bE_X)$, or more generally for any formal completion $\wh{X}_Z$ of an Artin stack along a closed substack, introduced in \cite[Def. 2.2.7]{CD}.  The main properties of this category were discussed in Section \ref{intro ren}.  
\begin{defn}\label{def hcoh}
For a closed substack $i: Z \hookrightarrow X$ of an Artin stack $X$, the category $\hCoh_Z(X) = \hCoh(\wh{X}_Z)$ of \emph{continuous ind-coherent sheaves} on the formal completion is defined to be the full subcategory of $\IndCoh(\wh{X}_Z)$ of t-bounded and $!$-almost perfect objects (i.e. objects such that the $!$-restriction along any closed substack of $\wh{X}_Z$ is left t-bounded with coherent cohomology).  
\end{defn}

The categories $\Coh(\wh{\bE}_X)$ and $\hCoh(\wh{\bE}_X)$ only differ when $\bE_X$ has a degree 0 (i.e. classical) component.  In particular, in the case of odd tangent bundles, this situation arises only when $X$ is a stack, and not when $X$ is a scheme.
\begin{prop}\label{prop coh equals}
Let $X$ be an Artin stack and $\bE_X$ a connective derived vector bundle with a weight 1 contracting $\bG_m$-action.
If $\bE_X$ is perfect in strictly positive degrees (i.e. strictly connective), then 
$$\Coh_X(\bE_X) = \hCoh_X(\bE_X) =\Coh(\bE_X).$$
\end{prop}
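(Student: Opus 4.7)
The plan is to reduce all three equalities to the observation that strict connectivity of $\bE_X$ forces the classical truncations of $\bE_X$ and the zero section $X \hookrightarrow \bE_X$ to coincide. Under the hypothesis, $\cE^\vee$ is perfect in cohomological degrees $\leq -1$, hence $\pi_0(\cO_{\bE_X}) = \pi_0(\Sym_X \cE^\vee) = \pi_0(\cO_X)$. The inclusion $z \colon X \hookrightarrow \bE_X$ therefore induces an isomorphism on classical truncations, and in particular $|\bE_X| = |X|$ as topological stacks. This forces the formal completion $\wh{\bE}_X$ to coincide with $\bE_X$ itself as a prestack, so $\IndCoh(\wh{\bE}_X) = \IndCoh(\bE_X)$ and $\Coh(\wh{\bE}_X) = \Coh(\bE_X)$.

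The identity $\hCoh_X(\bE_X) = \Coh(\bE_X)$ then follows from the criterion recalled in Section \ref{intro ren}(1): the inclusion $\Coh(\wh{Y}_Z) \subset \hCoh(\wh{Y}_Z)$ is strict exactly when $|Z| \neq |Y|$, which does not occur here. For the remaining identity $\Coh_X(\bE_X) = \Coh(\bE_X)$, I would show that every $\cF \in \Coh(\bE_X)$ lies in the smallest stable idempotent-complete subcategory generated by $z_* \Coh(X)$. Since $\cF$ is $t$-bounded, it admits a finite Postnikov filtration whose associated graded pieces are shifts of its cohomology sheaves $\cH^i(\cF) \in \QCoh(\bE_X)^{\heartsuit}$, so it suffices to see each $\cH^i(\cF)$ lies in $z_* \Coh(X)$. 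But the equality $\pi_0(\cO_{\bE_X}) = \pi_0(\cO_X)$ identifies $\QCoh(\bE_X)^{\heartsuit}$ with $\QCoh(X)^{\heartsuit}$ via $z_*$, and this identification preserves coherence.

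The main technical point — which I expect to amount to bookkeeping rather than substantive difficulty — is the identification of hearts in the stacky setting, which by smooth descent reduces affine-locally to the standard fact that the abelian category of modules over a connective dg algebra $A$ is equivalent to that over $\pi_0(A)$.
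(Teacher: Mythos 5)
Your argument is correct in substance and, at its core, runs the same devissage as the paper; the differences are organizational. For the equality $\Coh_X(\bE_X) = \Coh(\bE_X)$ you and the paper both reduce by a finite Postnikov filtration to objects of the heart. The paper's endgame is that $\cH^0(\cO_{\bE_X})$ is module-finite over $\cO_X$, hence its coherent modules are generated by $\cO_X$-modules under $z_*$; you use the sharper observation that strict connectivity forces $\cH^0(\cO_{\bE_X}) = \cH^0(\cO_X)$, so $z_*$ identifies the hearts outright. Both are fine, and your version is marginally cleaner under the stated hypothesis. Your opening observation that $z$ is an isomorphism on classical truncations, so that the formal completion $\wh{\bE}_X \rightarrow \bE_X$ is an equivalence, is correct and is a nice way to package why all three categories should coincide.

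The one step you must repair is the justification of $\hCoh_X(\bE_X) = \Coh(\bE_X)$. You cite "the criterion recalled in Section \ref{intro ren}(1)," but that item is an expository summary of properties attributed to Theorem \ref{graded functoriality thm}, which appears later and whose relevant part is deduced from the very proposition you are proving; as written the citation is circular. The fact you need does follow quickly once the completion is trivial, but it should be extracted directly from Definition \ref{def hcoh}: an object $\cF$ of $\hCoh_X(\bE_X)$ is t-bounded and its $!$-restriction to every closed substack is left t-bounded with coherent cohomology, and one must convert this into coherence of the cohomology sheaves of $\cF$ over $\cO_{\bE_X}$. This is exactly the paper's short computation: if $\cH^0(\cF)$ is the lowest nonvanishing cohomology, then (using that the augmentation ideal is strictly coconnected) $\cH^0(\cF) = \shExt^0_X(\cO_X, \cF)$, which is coherent because $z^!\cF$ is almost perfect; one then peels off $\cH^0(\cF)$ and inducts. (Alternatively, if one reads "closed substack" in Definition \ref{def hcoh} as permitting $\bE_X$ itself, the conclusion is immediate from $\mathrm{id}^!\cF = \cF$.) Replace the appeal to the introduction by this argument and your proof is complete.
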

\begin{proof}
We first show that $\Coh_X(\bE_X) = \Coh(\bE_X)$.  By definition, $\Coh_X(\bE_X) \subset \Coh(\bE_X)$.  For the opposite inclusion, for any bounded complex in $\Coh(\bE_X)$ by devissage it suffices to consider to argue for objects in the heart, i.e. for $H^0(\Sym_X \cE^\vee)$-modules; this is a module-finite algebra over $\cO_X$, thus is generated by $\cO_X$-modules.  Next, we show that $\hCoh_X(\bE_X) = \Coh(\bE_X)$.  The inclusion $\Coh_X(\bE_X) \subset \hCoh_X(\bE_X)$ is clear.  For the opposite inclusion, it suffices to show that the complexes in $\hCoh_X(\bE_X)$ have coherent cohomology.  By assumption the complexes are t-bounded; assume that the lowest nonvanishing cohomology of $\cF$ is in degree 0, so that $\cH^0(\cF) \subset \cF$ is a sheaf of $\cO_{\bE_X}$-submodules.  Since $\shExt^0_X(\cO_X, \cF) = \cH^0(\cF)$, we have that $\cH^0(\cF)$ is coherent.  We may deduce coherence of the remaining cohomology groups by devissage.
\end{proof}

\subsubsection{}  When $\bE_X$ is perfect in degree 0, i.e. $\bE_X$ is a classical vector bundle over $X$, these categories no longer coincide.  Here the Koszul duality equivalence of Theorem \ref{kd graded no functoriality} gives an equivalence
$$\begin{tikzcd}[column sep=huge]\Coh_X(\bE_X/\bG_m) \arrow[r, "\unshear \circ \kappa", "\simeq"'] & \Perf(\kzz{\bE}_X/\bG_m).\end{tikzcd}$$
Our goal is to enlarge the right-hand side to $\Coh(\kzz{\bE}_X/\bG_m)$ and identify the corresponding category on the left.  Consider the example where $X = \Spec k$, and $\bE_X = \bA^1 = \Spec k[x]$ with Koszul dual $\kz{\bE}_X = \Spec k[\eta]$ with $|\eta| = 1$.  For free, we have $\kappa(z_*\cO_X) \simeq \cO_{\kz{\bE}_X}$.  Koszul dually, we may write $z_* \cO_X \in \Coh(\kz{\bE}_X/\bG_m)$ in terms a ``free resolution'', i.e. as the colimit (where $\langle - \rangle$ denotes a weight shift):
$$z_* \cO_X = \left( \begin{tikzcd} \cdots \arrow[r, "\eta"] & \cO_{\kz{\bE}_X}[-2]\langle 2 \rangle \arrow[r, "\eta"] & \cO_{\kz{\bE}_X}[-1]\langle 1 \rangle \arrow[r, "\eta"] & \cO_{\kz{\bE}_X}\end{tikzcd} \right)$$
$$= \colim_k \left(\cone(\eta: \cO_{\kz{\bE}_X}[-k]\langle k \rangle \rightarrow \cone(\cdots \cone(\eta: \cO_{\kz{\bE}_X}[-2]\langle 2 \rangle \rightarrow \cone(\eta: \cO_{\kz{\bE}_X}[-1]\langle 1 \rangle \rightarrow \cO_{\kz{\bE}_X})))\right).$$
Since $\kappa^{-1}$ sends $\eta$ to the unique $\Ext^1(k\langle i \rangle, k[x]/x^{i} \langle i-1\rangle)$, we have
$$\kappa^{-1}(z_*\cO_X) \simeq \colim k[x]/x^{i+1} \langle i \rangle \simeq k[x,x^{-1}]/x k[x] \simeq \omega_{\wh{\bA}^1/\bG_m}$$
i.e. the injective hull of the augmentation module $k$, which lives in $\hCoh(\wh{\bA}^1/\bG_m)$.  This example suggests that when $\bE_X$ is perfect in degree 0, Koszul duality should give an equivalence between $\hCoh(\wh{\bE}_X)$ and $\Coh(\kz{\bE}_X)$.

\subsubsection{} With shearing $\kzz{\bE}_X$ is perfect in degrees $[0, 1]$ and we have  $\Perf(\kzz{\bE}_X) \subset \Coh(\kzz{\bE}_X)$, but without shearing $\kz{\bE}_X$ is perfect in degrees $[-2, -1]$ and the categories $\Perf(\kz{\bE}_X)$ and $\Coh(\kz{\bE}_X)$ do not have a containment relation.  We introduce the following category for use in statements in the non-sheared setting.
\begin{defn}\label{kcoh def}
We define the category $\KCoh(\bE_X) \subset \QCoh(\bE_X)$ of \emph{Koszul-sheared coherent sheaves} to be the full subcategory of complexes of $\cO_{\bE_X}$-modules whose cohomology is finitely generated over the $\cO_X$-subalgebra of $\cH^\bullet(\cO_{\bE_X}) = \cH^\bullet(\Sym_X \cE^\vee)$ generated by $\cH^2(\cE^\vee)$.  A standard argument by long exact sequences shows that this subcategory is pretriangulated.
\end{defn}


\begin{rmk}\label{kcoh rmk}
The category $\KCoh$ will only appear for us on the Koszul dual side (i.e. for the bundle $\kz{\bE}_X$).  Its name is motivated by the fact that  $\KCoh(\kz{\bE}_X/\bG_m) \simeq \Coh(\kzz{\bE}_X/\bG_m)^{\shear}$ under the Koszul shearing.  
When $\bE_X$ is perfect in degrees $[-2, -1]$ (i.e. dual to a bundle perfect in degrees $[0, 1]$), the subcategory $\KCoh(\bE_X) \subset \QCoh(\bE_X)$ has an equivalent description as the full category of sheaves whose cohomology is finitely generated over $\cH^\bullet(\cO_{\bE_X})$.
\end{rmk}

\subsubsection{} Having introduced the requisite definitions we now prove our renormalization of Koszul duality.
\begin{thm}[Renormalized Koszul duality for formal vector bundles]\label{kd graded ren}
Assume that $X$ is smooth QCA Artin stack, and assume that $\bE_X$ is perfect in degrees $[0, 1]$.  The Koszul duality functor $\kappa: \IndCoh_X(\bE_X) \rightarrow \QCoh(\kz{\bE}_X)$ of Theorem \ref{kd graded no functoriality} restricts to an equivalence of categories
$$\begin{tikzcd}
\hCoh_X(\bE_X) \arrow[r, "{\kappa}", "\simeq"'] & \KCoh(\kz{\bE}_X).\end{tikzcd}$$
Furthermore, assuming $X$ and $Y$ are smooth, let $\pi: \bE_X \rightarrow \bE_Y$ be a base-change morphism induced from $\pi_0: X \rightarrow Y$, and $\delta: \bE_X \rightarrow \bF_X$ a linear morphism.  Then, Koszul duality exchanges the following functors under the indicated conditions:
\begin{center}
\begin{tabular}{c|c|c}
$\wh{\Coh}_X(\bE_X)$ & $\KCoh(\kz{\bE}_X)$ & condition \\ \hline
$\pi_*$ & $\kz{\pi}_*$ & $\pi_0$ proper \\
$\pi^!$ & $\kz{\pi}^!$ & none \\
$\delta_*$ & $\kz{\delta}^*$ & $\cH^0(\mathrm{fib}(\cE^\bullet \rightarrow \cF^\bullet)) = 0$ \\
$\delta^!$ & $\kz{\delta}_*$ & the map $\cH^1(\cE^\bullet) \rightarrow \cH^1(\cF^\bullet)$ is surjective
\end{tabular}
\end{center}
We have similar statements in the graded setting.
\end{thm}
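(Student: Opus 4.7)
The plan is to realize the renormalized equivalence as a restriction of the Koszul duality equivalence $\kappa: \IndCoh_X(\bE_X) \simeq \QCoh(\kz{\bE}_X)$ of Theorem \ref{kd graded no functoriality}. Following the proof of that theorem, I would first reduce to the graded connective setting via de-equivariantization and the shearing formalism of Definition \ref{cat def}. After Tate unshearing the target using the identification $\KCoh(\kz{\bE}_X) \simeq \Coh(\kzz{\bE}_X)^{\shear}$ of Remark \ref{kcoh rmk}, the sheared Koszul dual $\kzz{\bE}_X$ becomes a connective relatively quasi-smooth derived vector bundle over $X$, and the target statement becomes $\hCoh_X(\bE_X) \simeq \Coh(\kzz{\bE}_X)$.

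To establish the restricted equivalence I would match the two subcategories by their defining conditions. By Theorem \ref{kd graded functorial}, the smaller subcategories $\Coh_X(\bE_X)$ and $\Perf(\kzz{\bE}_X)$ already correspond under $\kappa$; the enlargements correspond to allowing cohomologically bounded objects with coherent (rather than perfect) cohomology. For the forward inclusion, given $\cF \in \hCoh_X(\bE_X)$, I would compute $\kappa\cF \simeq \cR_{\bE_X} \otimes_{\cO_{\bE_X}} \cF$ using the interpolating complex of Section \ref{koszul resolution}, and verify that its cohomology is finitely generated over $\cH^0(\cO_{\kzz{\bE}_X})$ by exploiting the $!$-almost perfectness of $\cF$ tested on a smooth atlas $p: U \to X$---the atlas-pulled-back bundle $\bE_U$ being a quasi-smooth affine scheme over $U$, where the claim reduces to the scheme case. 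For the converse, I would observe that $\Coh(\kzz{\bE}_X)$ is generated under finite colimits, shifts, and retracts by objects of the form $\kappa(z_*\cM)$ for $\cM \in \Coh(X)$ together with the structure sheaf $\cO_{\kzz{\bE}_X}$, and check that each such generator pulls back under $\kappa^{-1}$ to an object in $\hCoh_X(\bE_X)$.

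The functoriality table is then verified row by row, using the generalized Koszul resolutions of Definition \ref{generalized koszul resolutions} together with the corresponding cases of Theorem \ref{kd graded functorial}. The cases for $\pi$ follow by smooth/proper base change, reducing to the already established functoriality. For a linear morphism $\delta: \bE_X \to \bF_X$, the condition $\cH^0(\fib(\cE^\bullet \to \cF^\bullet)) = 0$ ensures that $\cK_\delta$ has bounded Tor-amplitude in the classical direction, so that $\delta_*$ preserves $\hCoh$; dually, surjectivity of $\cH^1(\cE^\bullet) \to \cH^1(\cF^\bullet)$ translates via Koszul duality to the analogous Tor-amplitude bound, ensuring that $\delta^!$ corresponds to $\kz{\delta}_*$ with preservation of $\KCoh$. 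The main obstacle, and the most delicate point, is the precise bookkeeping of the $\cH^0$ and $\cH^1$ hypotheses on $\fib(\cE^\bullet \to \cF^\bullet)$: these are exactly matched by the asymmetric behavior of ``classical'' versus ``derived'' directions on the $\hCoh$ and $\KCoh$ sides under Koszul duality, and verifying the sharp statements requires careful tracking of which cohomological degrees of $\cK_\delta$ contribute to finite generation over which subalgebra of $\cH^\bullet(\cO_{\kz{\bE}_X})$.
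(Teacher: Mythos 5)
Your proposal has a genuine gap in both directions of the main equivalence, and the missing ingredient is the same in each: a membership test for $\hCoh_X(\bE_X)$ that can be transported across $\kappa$. In your converse (essential surjectivity) step, the generation claim is false: the objects $\kappa(z_*\cM) \simeq \cO_{\kz{\bE}_X} \otimes_{\cO_X} \cM$ are free modules, and their closure under finite colimits, shifts and retracts is $\Perf(\kzz{\bE}_X)$, which is a \emph{proper} subcategory of $\Coh(\kzz{\bE}_X)$ precisely in the situation the theorem is about. Already for $\bE_X$ a classical line bundle over a point, $\cO_{\kzz{\bE}_X}$ is an exterior algebra $k[\eta]$, the augmentation module is coherent but not perfect, and its preimage under $\kappa$ is the injective hull $\omega_{\wh{\bA}^1/\bG_m} \in \hCoh \setminus \Coh$ --- this is exactly the paper's motivating computation. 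So your generators only recover the unrenormalized equivalence $\Coh_X(\bE_X) \simeq \Perf(\kz{\bE}_X)$ already contained in Theorem \ref{kd graded functorial}. In the forward step, testing $!$-almost perfectness on an atlas $p: U \to X$ does not reduce to anything previously established: for a general $\bE_X$ perfect in degrees $[0,1]$ the pullback $\bE_U$ is still perfect in degrees $[0,1]$ (the classical direction of $\cE$ is intrinsic to the bundle and does not come from stackiness of $X$, unlike for odd tangent bundles), so $\hCoh(\wh{\bE}_U)$ still differs from $\Coh(\wh{\bE}_U)$ and the base being a scheme gains nothing.

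The paper's proof supplies exactly the missing device. Working smooth-locally, one presents $\cE = (\cE^0 \to \cE^1)$ and uses the hard truncation to produce a quasi-smooth closed embedding $\delta: \bE_X^1 \hookrightarrow \bE_X$, base-changed from $X \hookrightarrow \bE_X^0$. By \cite[Prop. 2.2.13]{CD}, $\cF \in \hCoh_X(\bE_X)$ if and only if $\delta^!\cF \in \Coh(\bE_X^1) = \Coh_X(\bE_X^1)$, the last equality holding because $\bE_X^1$ is strictly connective (Proposition \ref{prop coh equals}). Since $\cone(\delta^\flat) = (\cE^0)^\vee[1]$ is perfect in odd degrees, Theorem \ref{kd graded functorial} identifies $\delta^!$ with $\kz{\delta}_*$ and $\Coh_X(\bE_X^1)$ with $\Perf(\kz{\bE}_X^1)$, so the entire membership criterion transports to the dual side, where a weight--degree count shows it cuts out exactly $\KCoh(\kz{\bE}_X)$. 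Your treatment of the functoriality table is in the right spirit (the conditions on $\cH^0(\fib(\cE^\bullet\to\cF^\bullet))$ and on $\cH^1(\cE^\bullet)\to\cH^1(\cF^\bullet)$ are indeed checked on the dual side via the generalized Koszul resolution and surjectivity on $\cH^2$ of structure sheaves), but the equivalence itself needs the truncation test, at which point one is essentially reproducing the paper's argument.
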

\begin{proof}
We will show that the Koszul duality functor of Theorem \ref{kd graded no functoriality} preserves the indicated subcategories.  The claim is smooth local on $X$, so we may assume that $\cE = (\cE^0 \rightarrow \cE^{1})$ is a complex of locally free sheaves in degrees $[0, 1]$.  There are closed embeddings of bundles sitting in a fiber square
$$\begin{tikzcd}
\bE_X^1 \arrow[r, "\delta", hook] \arrow[d, hook] & \bE_X \arrow[d, hook, "\iota"] \\ 
X \arrow[r, hook] & \bE_X^0
\end{tikzcd}$$
where $\bE_X^1$ is the derived vector bundle defined by $(X, \cE^1[-1])$ and $\bE_X^0$ is defined by $(X, \cE^0)$, and the maps are given by the hard truncations (which are not canonical and depend on our presentation of $\cE$).  Since $\cone(\delta^\flat) = (\cE^{0})^\vee[1]$, by Theorem \ref{kd graded functorial} we have a commuting diagram
$$\begin{tikzcd}
\IndCoh_X(\bE^{1}_X) \arrow[r, "\kappa", "\simeq"'] & \QCoh(\kz{\bE}_X^{1}) \\
\IndCoh_X(\bE_X) \arrow[u, "\delta^!"] \arrow[r, "\kappa", "\simeq"'] & \QCoh(\kz{\bE}_X). \arrow[u, "\kz{\delta}_*"]
\end{tikzcd}$$ 
Furthermore, note that the closed embedding $\bE^{1}_X \hookrightarrow \bE_X$ is quasi-smooth  since it is based changed from $X \hookrightarrow \bE_X^0$; thus we may apply \cite[Prop. 2.2.13]{CD}, and we have that $\hCoh_X(\bE_X) \subset \IndCoh(\bE_X)$ may be characterized as the full subcategory consisting of objects $\cF$ such that $\delta^! \cF \in \Coh(\bE_X^1) = \Coh_X(\bE_X^1)$, the latter identification since $\bE^{1}_X$ is perfect in degree 1.  The category $\Coh_X(\bE_X^1)$ is Koszul dual to $\Perf(\kz{\bE}^{1}_X)$, and by functoriality we have that $\hCoh_X(\bE_X) \subset \IndCoh_X(\bE_X)$ is Koszul dual to the full subcategory of objects $\cG \in \QCoh(\kz{\bE}_X)$ such that $\kz{\delta}_*\cG \in \Perf(\kz{\bE}^{1}_X)$.  Since $\kz{\bE}_X$ is perfect in degrees $[-2, -1]$ and $\kz{\bE}_X^1$ is perfect in degree -2, this Koszul dual subcategory is exactly $\KCoh(\kz{\bE}_X)$.

The functoriality claims are automatic, following Theorem \ref{kd graded functorial}, if we show the functors preserve the given subcategories; it suffices to prove this on the dual side.  We first address $\kz{\delta}^*$.  Let $\kz{\delta}^\flat: \cE[-1] \rightarrow \cF[-1]$ be the structure map for $\kz{\delta}: \kz{\bF}_X \rightarrow \kz{\bE}_X$.  We have that $\cone(\kz{\delta}^\flat)$ has cohomological amplitude $[0, 2]$, and the degree 2 term is the degree 1 part of $\cF$.  Thus, if the degree 0 part vanishes, then the relative Koszul resolution (see Definition \ref{generalized koszul resolutions}) $\Sym_{\cO_{\kz{\bF}_X}}(\cone(\kz{\delta}^\flat))$ is in $\KCoh$, which establishes the claim.  For $\kz{\delta}_*$, we need the map $\cO_{\kz{\bE}_X} \rightarrow \cO_{\kz{\bF}_X}$ to be surjective on $\cH^2$, which is exactly the stated condition.
\end{proof}

\begin{rmk}
The above result is false beyond the relatively quasi-smooth case, i.e. without the assumption that $\bE_X$ is perfect in degrees $[0, 1]$.  For example, if $\bE_X$ is perfect in degree 3, then $\kz{\bE}_X$ is perfect in degree 4, but $\kappa(\cO_X) \simeq \cO_{\kz{\bE}_X}$, which is freely generated in degree 4 and is not in $\KCoh(\bE_X)$.  This issue may be easily avoided by modifying the definition of $\KCoh(\kz{\bE}_X)$.  More seriously, if $\bE_X$ is perfect in degrees $[0, 2]$, then $\kappa^{-1}(\cO_X) \simeq \cO_{\bE_X}^\sharp$ is the graded dual, which is not t-bounded.  Roughly, the issue is that in this case, $\kz{\bE}_X$ is perfect in degrees $[-1, -3]$, and $\cO_X$ is not ``perfect for the degree -3 part''.  To allow for this object requires a modification of the definition of $\hCoh_X(\bE_X)$. 
\end{rmk}

\subsection{Odd tangent bundles}\label{ren loops}

We now introduce our primary derived vector bundle of interest, the odd tangent bundle, and its dual bundle.
\begin{defn}\label{odd tangent defn}
Let $X$ be an Artin 1-stack with cotangent complex $\Omega^1_X$ and tangent complex $\cT_X$.  
\begin{enumerate}
\item We define the \emph{$(-1)$-shifted} or \emph{odd tangent bundle} to be the derived vector bundle
$$\bT_X[\ng1] := (X, \cT_X[\ng1]), \;\;\;\;\;\;\;\;\;\;\;\;\;\; \cO_{\bT_X[\ng1]} = \Sym_X \Omega^1_X[1].$$
Since $X$ is an Artin 1-stack, this is a connective vector bundle on $X$, and we view it as an Artin 1-stack.  We also introduce the \emph{formal odd tangent bundle} $\wh{\bT}_X[\ng1]$ to be its completion along the zero section $X \subset \bT_X[\ng1]$.
\item Its Koszul dual bundle is the \emph{2-shifted cotangent bundle} $\bT^*_X[2] := (X, \Omega^1_X[2])$, and its regraded Koszul dual is the \emph{(classical) cotangent bundle} $\bT^*_X := (X, \Omega^1_X)$.  We always view $\bT^*_X[2]$ as a derived vector bundle and not as a stack (see Warning \ref{prestack warning}).  When $X$ is smooth, $\bT^*_X$ is connective, and we view it as an Artin 1-stack.
\end{enumerate}
To a map $f: X \rightarrow Y$ we can associate the following functorialities. 
\begin{enumerate}
\item We have a map of $(\ng1)$-shifted tangent bundles\footnote{It might be better to use the notation $d[\ng1]f$, but this is cumbersome.}
$$df: \begin{tikzcd}[column sep=huge] \bT_X[\ng1] \arrow[r, "\delta_f"] & \bT_Y[\ng1] \times_Y X \arrow[r, "\pi_f"] & \bT_Y[\ng1] \end{tikzcd}$$ 
where $\pi_f$ is the base change of $f$ and $\delta_f$ is defined by the canonical map on cotangent complexes $f^* \Omega^1_Y \rightarrow \Omega^1_X$.  This map induces a morphism on formal odd tangent bundles  $\wh{d}f: \wh{\bT}_X[\ng1] \longrightarrow \wh{\bT}_Y[\ng1].$  We are interested the functors 
$$\wh{d}f^!: \IndCoh(\wh{\bT}_Y[\ng1]) \rightarrow \IndCoh(\wh{\bT}_X[\ng1]), \;\;\;\;\;\;\; \wh{d}f_*: \IndCoh(\wh{\bT}_X[\ng1]) \rightarrow \IndCoh(\wh{\bT}_Y[\ng1]).$$
\item Functoriality for 2-shifted or classical cotangent bundles are given by the usual correspondence on the Koszul dual side
$$\begin{tikzcd}
\bT_X^*[2] & \bT^*_Y[2] \times_Y X \arrow[l, "\kz{\delta}_f"'] \arrow[r, "\kz{\pi}_f"] & \bT^*_Y[2].
\end{tikzcd}$$
We are interested in the functors 
$$\kz{d}f^! := \kz{\delta}_* \kz{\pi}^!: \QCoh(\bT^*_Y[2]) \rightarrow \QCoh(\bT^*_X[2]), \;\;\;\;\;\;\;\;\;\; \kz{d}f_* := \kz{\pi}_*\kz{\delta}^*: \QCoh(\bT^*_X[2]) \rightarrow \QCoh(\bT^*_Y[2]).$$ 
In order for the functor $\kz{\pi}_f^!$ to be well-defined on quasi-coherent sheaves, we require $f$ to be finite Tor-dimension, e.g. $X, Y$ smooth \cite[\textsection 7.2]{indcoh}.
\end{enumerate}
\end{defn}

\subsubsection{}\label{graded functoriality} We establish a few basic properties of these functors.
\begin{prop}\label{odd tangent functorial prop}
Let $f: X \rightarrow Y$ be a map of Artin 1-stacks representable by algebraic spaces.  We let $\langle r \rangle$ denote a weight shift in the positive $r$ direction (which may be ignored in the non-graded setting).
\begin{enumerate}[(a)]
\item Letting $\iota_X: \wh{\bT}_X[\ng1] \hookrightarrow \bT_X[\ng1]$ denote the inclusion of the formal neighborhood (and likewise $\iota_Y$), we have $\iota_X \circ \wh{d}f^! \simeq df^! \circ \iota_Y$, i.e the following diagram commutes.
$$\begin{tikzcd}
\IndCoh(\wh{\bT}_X[\ng1]) \arrow[r, hook, "\iota_{X*}"] & \IndCoh(\bT_X[\ng1]) \\
\IndCoh(\wh{\bT}_Y[\ng1]) \arrow[r, hook, "\iota_{Y*}"] \arrow[u, "\wh{d}f^!"] & \IndCoh(\bT_Y[\ng1]).\arrow[u, "df^!"'] 
\end{tikzcd}$$
In particular, the functor $\wh{d}f^!$ may be computed by $df^! = \delta_f^! \pi_f^!$.
\item When $f$ is smooth of relative dimension $r$, $df$ is finite Tor-dimension, thus we may define a functor 
$$df^*: \IndCoh(\bT_Y[\ng1]) \rightarrow \IndCoh(\bT_X[\ng1]).$$  
Furthermore, we have a canonical equivalence $df^* \langle -r \rangle \simeq df^!$, i.e. $df$ is Calabi-Yau of relative dimension 0.
\item The functors $(\wh{d}f_*, \wh{d}f^!)$ are adjoint when $f$ is proper, and $(\wh{d}f^!\langle r \rangle, \wh{d}f_*)$ are adjoint when $f$ is smooth.
\item Assume that $f$ has finite Tor-dimension.  The Koszul dual functors $(\kz{d}f_*, \kz{d}f^!)$ are adjoint when $f$ is proper.  When $f$ is smooth of relative dimension $r$, the functors $(\kz{d}f^!\langle r \rangle, \kz{d}f_*)$ are adjoint; in particular, if $f$ is \'{e}tale or an open embedding, $(\kz{d}f^!, \kz{d}f_*)$ are adjoint.
\end{enumerate}
\end{prop}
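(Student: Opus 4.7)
The plan is to systematically reduce each claim about $df$ to properties of its factorization $df = \pi_f \circ \delta_f$, where $\pi_f$ is the base change of $f$ (which inherits properness, smoothness, and finite Tor-dimension from $f$) and $\delta_f$ is the linear morphism given by the canonical map $f^*\Omega^1_Y \to \Omega^1_X$. Part (d) will then follow from part (c) by transporting the adjunctions across the Koszul duality equivalences of Theorem~\ref{kd graded functorial}.

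For (a), I would observe that $df$ sends the zero section $X \subset \bT_X[\ng1]$ into the zero section $Y \subset \bT_Y[\ng1]$, so $!$-pullback preserves the condition of being set-theoretically supported on the zero section; the claimed commutation then follows from standard compatibility of $!$-pullback with formal completion in the sense of \cite{GR}, and in particular $\wh{d}f^!$ is computed by applying $\delta_f^!\pi_f^!$ to objects pulled back via $\iota_{Y*}$. For (b), the map $\pi_f$ is smooth of relative dimension $r$ (being a base change of $f$), while the cofiber sequence $f^*\Omega^1_Y \to \Omega^1_X \to \Omega^1_{X/Y}$ for smooth $f$ identifies $\delta_f$ as a quasi-smooth closed embedding whose relative cotangent complex is $p_X^*\Omega^1_{X/Y}[1]$, sitting in the fiber direction of $\bT_X[\ng1]$ which carries weight $1$. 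A direct computation of the relative dualizing complexes of $\pi_f$ and $\delta_f$, tracking the cohomological shifts and determinant twists together with their weights, shows that these contributions cancel against each other except for a single weight shift by $\langle -r \rangle$, yielding the Calabi-Yau identity $df^! \simeq df^*\langle -r \rangle$.

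For (c), properness of $f$ implies properness of $df$ (closed embeddings are proper, and $\pi_f$ inherits properness from $f$), so the standard adjunction $(df_*, df^!)$ on $\IndCoh$ applies, and restricts to $(\wh{d}f_*, \wh{d}f^!)$ using (a) together with the fact that $df_*$ preserves sheaves supported on the zero sections. When $f$ is smooth, $df$ is finite Tor-dimension so $df^*$ is defined on $\IndCoh$ and $(df^*, df_*)$ is the standard adjunction; combined with the Calabi-Yau identity from (b), this yields $(\wh{d}f^!\langle r\rangle, \wh{d}f_*)$ adjoint. For (d), Theorem~\ref{kd graded functorial} provides natural intertwinings $\kappa \circ df_* \simeq \kz{d}f_* \circ \kappa$ (assembling $\delta_* \leftrightarrow \kz\delta^*$ with $\pi_* \leftrightarrow \kz\pi_*$) and $\kappa \circ df^! \simeq \kz{d}f^! \circ \kappa$ (assembling $\delta^! \leftrightarrow \kz\delta_*$ with $\pi^! \leftrightarrow \kz\pi^!$); the finite Tor-dimension hypothesis ensures that $\kz\pi_f^!$ is well-defined on $\QCoh$. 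Since equivalences of categories preserve adjunctions, both statements of (c) transfer verbatim to the Koszul dual side.

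The main obstacle I anticipate is the weight-and-shift bookkeeping in (b): canonically trivializing the composite of the determinant twists and cohomological shifts from the two factors into the single weight shift $\langle -r \rangle$ requires carefully distinguishing between directions in $\Omega^1_{X/Y}$ that are pulled back from the base $X$ (and so carry weight $0$) and those that sit in the fiber of the odd tangent bundle (and so carry weight $1$). A secondary subtlety in (d) is verifying that the parity condition on $\cone(\delta^\flat)$ in Theorem~\ref{kd graded functorial} is automatic for $\delta_f$ attached to a smooth or finite-Tor-dimension morphism $f$, so that the exchange $\delta^! \leftrightarrow \kz\delta_*$ legitimately applies.
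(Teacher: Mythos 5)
The main gap is in your treatment of part (a). The condition you invoke --- that $df$ carries the zero section $X\subset\bT_X[\ng1]$ into the zero section $Y\subset\bT_Y[\ng1]$ --- is automatic and is not what makes the square commute; what is actually needed is the opposite containment, namely that the (classical, reduced) preimage $df^{-1}(\{0\}_Y)$ is contained in $\{0\}_X$. Indeed, $!$-restricting an object supported on $\{0\}_Y$ produces an object supported on that preimage, and one needs the preimage to be a nilthickening of the zero section for $df^!\circ\iota_{Y*}$ to land in the essential image of $\iota_{X*}$. This is precisely where representability of $f$ enters, and your argument never uses it: factoring $df=\pi_f\circ\delta_f$, the condition is clear for the base-change morphism $\pi_f$, while for the linear morphism $\delta_f$ the preimage of the zero section is $\Spec_X\Sym_X(\Omega^1_{X/Y}[1])$, whose classical truncation is $X$ exactly because $\Omega^1_{X/Y}$ has vanishing cohomology in positive degrees when $f$ is representable by algebraic spaces. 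The hypothesis cannot be elided: for $f\colon BG\to\pt$ one has $df^!\iota_{Y*}(k)\simeq\omega_{\mf{g}/G}$, which is not supported on $\{0\}/G$, and (a) fails.

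A secondary issue concerns (d). You propose to transport the adjunctions of (c) across the Koszul duality of Theorem \ref{kd graded functorial}, but that theorem is stated for smooth QCA stacks, whereas (d) assumes only that $f$ is of finite Tor-dimension between Artin 1-stacks; the paper instead argues directly on the $\QCoh(\bT^*_\bullet[2])$ side, composing the adjunctions $(\kz{\delta}^*,\kz{\delta}_*)$ and $(\kz{\pi}_*,\kz{\pi}^!)$ in the proper case and, in the smooth case, computing the right adjoint of $\kz{d}f^!$ term by term, the only nontrivial input being the identification $\omega_{\bT^*_X[2]/\bT^*_Y[2]\times_Y X}\simeq\omega_{X/Y}[2r]\langle r\rangle$, obtained by shearing to the quasi-smooth map $\bT^*_X\to\bT^*_Y\times_Y X$ and unshearing. (Your worry about the parity condition on $\cone(\delta^\flat)$ is unfounded here: in Theorem \ref{kd graded functorial} that condition governs only preservation of the small subcategories, not the exchange of functors on the large ones.) Parts (b) and (c) of your proposal follow essentially the paper's route, with (b) cited there as standard.
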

\begin{proof}
The claim (a) follows by base change if $df^{-1}(\{0\}_Y) \subset \{0\}_X$, i.e. if the (classical reduced) inverse image of the zero section lives in the zero section.  We factor the map $\bT_X[-1] \rightarrow \bT_Y[-1] \times_Y X \rightarrow \bT_Y[-1]$; the claim is clearly true for the latter map.  For the former map, it is equivalent to the claim that $\Omega^1_{X/Y}[1]$ has vanishing cohomology in positive degrees, which follows by representability.  Claim (b) is standard; see for example \cite[Lem. 3.12]{BCHN}.  For claim (c), the argument in (a) tells us that $\delta: \bT_X[-1] \rightarrow \bT_Y[-1]$ is a closed embedding when $f$ is representable, while $\pi$ is proper, thus $df$ is proper, and the claim follows from the usual adjunction for ind-coherent sheaves \cite[Prop. 7.2.9]{indcoh} and (a).  The adjunction $(\wh{d}f^!, \wh{d}f_*)$ follows from (b).

For (d), the adjunction when $f$ is proper follows from the adjunctions $(\kz{\delta}^*, \kz{\delta}_*)$ and $(\kz{\pi}_*, \kz{\pi}^!)$.  When $f$ is smooth, we may compute the right adjoint $(\kz{d}f^!)^R = (\kz{\pi}^!)^R(\kz{\delta}_*)^R = (\kz{\pi}^*)^R (- \otimes \omega_{X/Y}[r])^R \kz{\delta}^! = (\kz{\pi}_* \otimes \omega^{-1}_{X/Y}[-r])(\kz{\delta}^* \otimes \omega_{\bT^*_X[2]/\bT^*_Y[2] \times_Y X}[-r]) = \kz{d}f_*\langle r\rangle$.  We need to show the last identification, i.e. $\omega_{\bT^*_X[2]/\bT^*_Y[2] \times_Y X} \simeq \omega_{X/Y}[2r]\langle r\rangle$.  The map $\bT^*_X[2] \rightarrow \bT^*_Y[2] \times_Y X$ is not quasi-smooth; however, its Koszul shear $\bT^*_X \rightarrow \bT^*_Y \times_Y X$ is, and its dualizing complex is readily computed to be $\omega_{X/Y}\langle r \rangle$.  The claim then follows by unshearing.
\end{proof} 

\begin{rmk}
Note that after shearing, i.e. when we consider $\bT^*_X$ instead of $\bT^*_X[2]$, for $f: X \rightarrow Y$ a smooth morphism of constant relative dimension, the functors $(\kz{d}f^![-2 \dim f]\langle -\dim f \rangle, \kz{d}f_*)$ on the Koszul dual side are adjoint.  This may be viewed as the classical limit of the category of $\cD$-modules, and explains why the second left-adjoint of $f^!$ has a shift by $2 \dim f$ for $\cD$-modules, yet for odd tangent bundles the second left-adjoint of $f^!$ is $f^!$ itself.
\end{rmk}

\subsubsection{} We now relate the two Koszul dual sides by specializing the results of Sections \ref{linear koszul} and \ref{ren linear kd} to this setting.  We now restrict our attention to the smooth case; recall that $\cat{smQCA}_k$ is the category of QCA stacks with morphisms given by representable functors.
\begin{thm}\label{graded functoriality thm}
Consider the functors
$$\IndCoh(\wh{\bT}_\bullet[\ng1]), \QCoh(\bT^*_\bullet[2]): \cat{smQCA}_k^{\opp} \longrightarrow \cat{Pr}^L_k.$$ 
The Koszul duality functor $\kappa$ from Theorem \ref{kd graded no functoriality} defines a natural isomorphism of functors
$$\begin{tikzcd}[column sep=large] \IndCoh(\wh{\bT}_\bullet[\ng1]) \arrow[r, "\simeq"', "\kappa"] & \QCoh(\bT^*_\bullet[2]) \end{tikzcd}$$
satisfying the following (with all statements admitting graded versions).
\begin{enumerate}
\item The functor $\kappa$ restricts to equivalences (see \ref{prop coh equals} for the shearing functor):
$$\begin{tikzcd} 
\IndCoh(\wh{\bT}_X[\ng1]) \arrow[r, "\simeq"', "\kappa"] & \QCoh(\bT^*_X[2]) & \IndCoh(\wh{\bT}_X[\ng1]/\bG_m) \arrow[r, "\simeq"', "\unshear \circ \kappa"] & \QCoh(\bT^*_X/\bG_m) \\ 
\hCoh(\wh{\bT}_X[\ng1]) \arrow[r, "\simeq"', "\kappa"] \arrow[u, hook] & \KCoh(\bT^*_X[2]) \arrow[u, hook]  & \hCoh(\wh{\bT}_X[\ng1]/\bG_m) \arrow[r, "\simeq"', "\unshear \circ \kappa"] \arrow[u, hook]  & \Coh(\bT^*_X/\bG_m) \arrow[u, hook] \\
\Coh(\wh{\bT}_X[\ng1]) \arrow[r, "\simeq"', "\kappa"] \arrow[u, hook] & \Perf(\bT^*_X[2]) \arrow[u, hook] & \Coh(\wh{\bT}_X[\ng1]/\bG_m) \arrow[r, "\simeq"', "\unshear \circ \kappa"] \arrow[u, hook] & \Perf(\bT^*_X/\bG_m). \arrow[u, hook]
\end{tikzcd}$$
\item Pullback along smooth maps and pushforward along proper maps preserves the small categories in the above diagram.
\item If $X$ is a smooth scheme, we have equalities
$$\Coh(\wh{\bT}_X[\ng1]) = \hCoh(\wh{\bT}_X[\ng1]) = \Coh(\bT_X[\ng1]),$$
$$\Perf(\bT^*_X[2]) = \KCoh(\bT^*_X[2]), \;\;\;\;\;\;\;\;\;\; \Perf(\bT^*_X) = \Coh(\bT^*_X).$$
\item Let $U$ be a smooth stack and $p: U \rightarrow X$ a smooth atlas.  The full subcategory $\hCoh(\wh{\bT}_X[\ng1]) \subset \IndCoh(\wh{\bT}_X[\ng1])$ consists of objects $\cF \in \IndCoh(\wh{\bT}_X[\ng1])$ such that $dp^!\cF \in \hCoh(\wh{\bT}_U[\ng1]) = \Coh(\bT_U[\ng1])$.
\item Letting $z: X \hookrightarrow \bT_X[\ng1]$ and $\kz{z} = p: \bT^*_X[2] \rightarrow X$, we have compatibility with induction and restriction to the base $X$, i.e. commuting diagrams
$$\begin{tikzcd}
\Coh(X) \arrow[d, "z_*"'] \arrow[r, equals] & \Perf(X) \arrow[d, "p^*"'] & \Coh(X) \arrow[r, equals] & \Perf(X) \\
\Coh_X(\bT_X[\ng1]) \arrow[r, "\simeq"', "\kappa"] & \Perf(\bT^*_X[2]) & \Coh_X(\bT_X[\ng1]). \arrow[r, "\simeq"', "\kappa"] \arrow[u, "z^!"] & \Perf(\bT^*_X[2]). \arrow[u, "p_*"]  
\end{tikzcd}$$
Furthermore, the induction functors are compatible with pushforward, while the restriction functors are compatible with pullback.
\end{enumerate}
\end{thm}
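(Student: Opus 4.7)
The plan is to deduce each claim by specializing the general results from Sections \ref{linear koszul} and \ref{ren linear kd} to the derived vector bundle $\bE_X = \bT_X[\ng1]$, whose Koszul dual is $\kz{\bE}_X = \bT^*_X[2]$ and whose sheared Koszul dual $\kzz{\bE}_X = \bT^*_X$ is a classical bundle when $X$ is smooth. The key observation is that for a smooth Artin 1-stack $X$, the cotangent complex $\Omega^1_X$ is perfect in degrees $[-1, 0]$, so $\bT_X[\ng1]$ is perfect in degrees $[0, 1]$, placing us exactly in the hypothesis of Theorem \ref{kd graded ren}. Moreover, for a map $f: X \to Y$ of smooth 1-stacks representable by algebraic spaces, the induced morphism $df: \bT_X[\ng1] \to \bT_Y[\ng1]$ decomposes as a linear morphism $\delta_f$ followed by a base-change morphism $\pi_f$ as in Definition \ref{odd tangent defn}, so all relevant functorialities are covered by the setups of Theorems \ref{kd graded functorial} and \ref{kd graded ren}.

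For part (1), the top equivalence is exactly Theorem \ref{kd graded no functoriality} applied to $\bE_X = \bT_X[\ng1]$; the middle equivalence is Theorem \ref{kd graded ren} (whose hypothesis is met); and the bottom equivalence is the graded/non-graded version of Theorem \ref{kd graded functorial} restricted to small categories, using that $\Perf(\bT^*_X) = \Perf(\bT^*_X[2])^{\unshear}$ under Tate unshearing. Part (3) for schemes then follows from Proposition \ref{prop coh equals}, since for $X$ a smooth scheme $\bT_X[\ng1]$ is perfect strictly in degree $1$, so $\hCoh$ collapses to $\Coh$, and on the dual side $\bT^*_X$ is a classical smooth vector bundle so $\Perf(\bT^*_X) = \Coh(\bT^*_X)$; the shearing statement $\Perf(\bT^*_X[2]) = \KCoh(\bT^*_X[2])$ is Remark \ref{kcoh rmk}.

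For part (2), the smooth and proper cases are exactly what is ensured by the tables in Theorems \ref{kd graded functorial} and \ref{kd graded ren}: pushforward along a proper $f$ is $df_* = \pi_{f,*}\delta_{f,*}$, and the two factors are handled separately (the linear factor $\delta_f$ always preserves small categories, while $\pi_f$ preserves them when $\pi_0 = f$ is proper). For smooth pullback $df^!$, Proposition \ref{odd tangent functorial prop}(b) identifies $df^!$ with a shift of $df^*$, and one combines the fact that $\pi_f^*$ always preserves the three subcategories with Theorem \ref{kd graded ren}'s condition on $\delta^!$: here $\delta_f^\flat: f^*\Omega^1_Y[1] \to \Omega^1_X[1]$ has cone $\Omega^1_{X/Y}[1]$ perfect in degree $-1$ (since $f$ is smooth), hence in odd degrees, and its $\cH^1$ vanishes, so both the unrenormalized and renormalized versions apply. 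Part (5) is the special case where $f$ is the linear morphism given by the inclusion of the zero section $z: X \hookrightarrow \bT_X[\ng1]$: by Theorem \ref{kd graded functorial}, the functor $\kappa$ exchanges $z_*$ with $\kz{z}^* = p^*$ and $z^!$ with $\kz{z}_* = p_*$; compatibility of induction with pushforward and restriction with pullback follows by base change along the commuting square of inclusions of zero sections, once again invoked via Theorem \ref{kd graded functorial}.

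The main obstacle is part (4), the characterization of $\hCoh(\wh{\bT}_X[\ng1])$ via $!$-pullback along a smooth atlas $p: U \to X$. The plan is to use Proposition \ref{odd tangent functorial prop}(a), which lets us compute $\wh{dp}^!$ via the ambient $dp^!$, and to combine this with the definition of $\hCoh$ as $!$-almost perfect t-bounded objects. Since the statement is smooth-local, one argues that $!$-restriction to the smooth atlas of the formal odd tangent bundle is conservative on the categories in question and detects the coherence/t-boundedness conditions; the inclusion ``$\Rightarrow$'' is immediate from functoriality in part (2), while the reverse inclusion uses that every closed substack of $\wh{\bT}_X[\ng1]$ is hit, up to smooth base change, by a closed substack of $\wh{\bT}_U[\ng1] = \bT_U[\ng1]$ (the latter equality by the scheme case of (3)). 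This reduces the question to coherence on a scheme, completing the characterization.
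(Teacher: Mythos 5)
Your proposal follows essentially the same route as the paper: specialize Theorems \ref{kd graded no functoriality}, \ref{kd graded functorial} and \ref{kd graded ren} to $\bE_X=\bT_X[\ng1]$ (perfect in degrees $[0,1]$ for a smooth Artin 1-stack), check the conditions in the functoriality tables for smooth and proper maps, and prove (4) by factoring $dp$ through $\bT_X[\ng1]\times_X U$ and reducing to the scheme case. The only places where the paper is more careful are in (2), where your claim that the linear pushforward $\delta_{f,*}$ \emph{always} preserves the small subcategories is not true for the renormalized row --- Theorem \ref{kd graded ren} requires $\cH^0(\fib(\delta_f^\flat))=0$, which does hold here precisely because proper maps are representable --- and in (4), where the reduction is made precise by noting that the completion of $\bT_X[\ng1]\times_X U$ along $U$ is a nilthickening of $\bT_U[\ng1]$, so that \cite[Prop.~2.2.13]{CD} applies to the quasi-smooth proper factor and smoothness handles the base-change factor.
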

\begin{proof}
The main statement follows from Theorem \ref{kd graded no functoriality}.  Statements (1)-(3) and (5) follow from Theorems \ref{big functoriality prop} and \ref{kd graded ren}, i.e. if $f: X \rightarrow Y$ is smooth, then $df^\flat: f^* \Omega^1_Y[1] \rightarrow \Omega^1_X[1]$ is injective with cone perfect in degree -1, and the dual map is surjective.  If $f$ is proper, then it is schematic and in particular $\cH^{-1}(\mathrm{cone}(\cE^\bullet \rightarrow \cF^\bullet)) = 0$.  For (4), note that the map $dp$ factors $\bT_U[-1] \rightarrow \bT_X[-1] \times_X U \rightarrow \bT_X[-1]$.  The first map is lci, and the completion of $\bT_X[-1] \times_X U$ along $U$ is a nilthickening of $\bT_U[-1]$, and we apply \cite[Prop. 2.2.13]{CD}; the second map is smooth, so the analogous claim is true for it as well.
\end{proof}

\subsubsection{} In \cite[Thm. 6.6]{loops and conns}, descent for quasi-coherent sheaves on formal odd tangent bundles was established, i.e. for an Artin 1-stack with affine diagonal $X$ and atlas $p: U \rightarrow X$ with Cech diagram $U_\bullet$, the canonical pullback functor
$$\begin{tikzcd} \QCoh(\wh{\bT}_X[\ng1]) \arrow[r, "\simeq"] & \Tot( \QCoh(\wh{\bT}_{U_\bullet}[\ng1])) = \Tot(\QCoh(\bT_{U_\bullet}[\ng1])) \end{tikzcd}$$
is an equivalence.  We have a similar result for ind-coherent sheaves.
\begin{prop}\label{odd tangent indcoh descent}
In the set-up above, the canonical $!$-pullback functor induces an equivalence
$$\begin{tikzcd} \IndCoh(\wh{\bT}_X[\ng1]) \arrow[r, "\simeq"] & \Tot( \IndCoh(\wh{\bT}_{U_\bullet}[\ng1])) = \Tot(\IndCoh(\bT_{U_\bullet}[\ng1])) \end{tikzcd}$$
and likewise for $\bG_m$, $B\bG_a$, $B\bG_a^{\gr}$-equivariant categories.
\end{prop}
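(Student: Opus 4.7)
The plan is to transport the descent question through Koszul duality into a smooth descent statement for quasi-coherent sheaves on shifted cotangent bundles, which is essentially classical. By Theorem \ref{graded functoriality thm}, there is a natural isomorphism of functors $\IndCoh(\wh{\bT}_\bullet[\ng1]) \simeq \QCoh(\bT^*_\bullet[2])$ from $\cat{smQCA}_k^{\opp}$ to $\cat{Pr}^L_k$, intertwining smooth $!$-pullbacks on either side. Applying this natural isomorphism levelwise to the Cech diagram associated to the atlas $p: U \to X$ reduces the non-equivariant statement to the equivalence
$$\QCoh(\bT^*_X[2]) \simeq \Tot(\QCoh(\bT^*_{U_\bullet}[2])).$$

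For this, I would apply Proposition \ref{shear cat prop} to identify $\QCoh(\bT^*_X[2])$ with the category of modules over the sheaf of algebras $\cO_{\bT^*_X[2]} = \Sym_X \Omega^1_X[2]$ in $\QCoh(X)$, and pass via Tate shearing (Definitions \ref{tate shear} and \ref{cat def}) to the honest smooth Artin stack $\bT^*_X$, which exists as such since $X$ is smooth. Because shearing is a categorical equivalence compatible with smooth pullbacks, descent for $\QCoh(\bT^*_\bullet[2])$ along the Cech cover reduces to descent for $\QCoh(\bT^*_\bullet)$ along the smooth cover $\bT^*_U = \bT^*_X \times_X U \to \bT^*_X$ whose Cech nerve is $\bT^*_{U_\bullet}$. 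The latter is standard smooth descent for $\QCoh$ on Artin stacks, which is built into the right Kan extension definition of $\QCoh$ recalled in Section \ref{dag sec}.

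For the $\bG_m$-, $B\bG_a$-, and $\BGA$-equivariant versions, I would note that each such equivariantization is a limit over a cobar cosimplicial diagram of presentable $k$-linear categories. The relevant group actions are functorial in $X$ for smooth pullbacks, since they arise intrinsically from the exponential map and the $S^1$-action on the loop space, both of which are natural in $X$. Hence the equivariant categories assemble into functors on the Cech diagram, and the desired descent follows from the non-equivariant case by Fubini, i.e., limits commuting with limits (Proposition \ref{limit commute}). The main technical point is to verify that the natural isomorphism of Theorem \ref{graded functoriality thm} upgrades to a $\BGA$-equivariant statement, so that the two totalizations can be compared equivariantly; this should follow from the functoriality of the $B\bG_a$-action construction under smooth pullback, with no essentially new input beyond what is already established.
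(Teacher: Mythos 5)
Your reduction through Koszul duality is not circular (Theorem \ref{graded functoriality thm} is proved independently of descent), but the key middle step fails as stated. Under Koszul duality the $!$-pullback $\wh{d}p^!$ on the odd tangent side corresponds to the functor $\kz{d}p^! = \kz{\delta}_*\kz{\pi}^!$ on the cotangent side, which is an integral transform along the correspondence $\bT^*_X[2] \leftarrow \bT^*_X[2]\times_X U \rightarrow \bT^*_U[2]$; it is \emph{not} pullback along a map of stacks. In particular $\bT^*_{U}$ is not $\bT^*_X\times_X U$ unless $p$ is \'etale (for a smooth atlas of positive relative dimension, $\bT^*_U$ has the extra fiber directions coming from $\Omega^1_{U/X}$), so the cosimplicial category $\QCoh(\bT^*_{U_\bullet}[2])$ is not the Cech diagram of a smooth cover of $\bT^*_X[2]$, and ``standard smooth descent for $\QCoh$ built into the right Kan extension definition'' does not apply. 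The statement you actually need --- that $\QCoh(\bT^*_X[2])$ is the totalization of $\QCoh(\bT^*_{U_\bullet}[2])$ with these correspondence transition functors --- is the classical limit of $\cD$-module descent and requires its own argument; indeed the paper obtains it (inside Proposition \ref{d mod descent}) as a \emph{consequence} of Proposition \ref{odd tangent indcoh descent} together with Theorem \ref{graded functoriality thm}, i.e., by running your reduction in the opposite direction.

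The paper's own proof stays entirely on the $\Omega$-side: formation of formal odd tangent bundles commutes with pullback, the map $\wh{\bT}_U[\ng1]\to\wh{\bT}_X[\ng1]$ factors as a proper map surjective on geometric points followed by a smooth map, hence is an h-cover, and one invokes h-descent for $\IndCoh$ from \cite{indcoh}. Your handling of the $\bG_m$-, $B\bG_a$-, and $\BGA$-equivariant cases by commuting limits agrees with the paper's; but note that once the non-equivariant statement is proved directly on the $\IndCoh$ side, no equivariant upgrade of the Koszul duality natural isomorphism is needed, so the ``main technical point'' you flag at the end is not where the difficulty lies.
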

\begin{proof}
First, note that the formation of formal odd tangent bundles commutes with pullbacks.  Next, note that if $U \rightarrow X$ is a smooth surjective map, then the composition $\wh{\bT}_U[\ng1] \rightarrow \wh{\bT}_X \times_X U \rightarrow \wh{\bT}_X[\ng1]$ is the composition of a proper map surjective on geometric points with a smooth map, and in particular is an h-cover.  The claim then follows from \cite[Thm. 8.2.2]{indcoh}.  The equivariant claim follows since equivariance is imposed by a limit.
\end{proof}

\section{Filtered Koszul duality and \texorpdfstring{$B\bG_a$}{BGa}-equivariant sheaves}

In this section we extended linear Koszul duality, which lives over $B\bG_m$, to a ``filtered'' Koszul duality living over $\bA^1/\bG_m$, matching $\QCoh(B\bG_a)$-equivariance structures with the deformation quantization of $\QCoh(\bT^*_X)$ to $\cD$-modules.  In Section \ref{sec bga} we review some basic notions regarding group actions on categories, and address an important technical point: when we equivariantize the category $\IndCoh(\wh{\bT}_X[\ng1])$ with respect to $B\bG_a$, we must do so at the level of small categories (or equivalently, we consider a renormalized large category) in order to have an interesting Tate construction.  In Section \ref{sec omega}, we review an explicit model for the category of $B\bG_a$-equivariant sheaves on $\bT_X[\ng1]$ in the case where $X$ is a scheme in terms of ``mixed'' $\Omega$-complexes.  In Section \ref{sec dmod} we then introduce the category of filtered $\cD$-modules on a scheme.  In Section \ref{kd sec} we prove the desired Koszul duality results.  Finally, we work out the example of $BG$ explicitly in Section \ref{dmod BG exmp}.

\subsection{\texorpdfstring{$B\bG_a$}{BGa}-actions on derived vector bundles}\label{sec bga}

We now introduce the notion of $B\bG_a$-actions and the associated \emph{Tate construction}.  Because there are technical issues with formulating this construction at the level of large categories, we prefer to work in the setting of small categories (though one may always ind-complete these small categories after equivariantization, i.e. renormalize).  We discuss these technical issues informally before introducing explicit constructions; for a more detailed discussion see \cite{ihes}.

\subsubsection{} The stacks $B\bG_a$ and $B^2\bG_a$ are examples of coaffine stacks \cite{lurie coaffine}.\footnote{We will prefer the term coaffine stacks to the term affine stacks used in \cite{toen affine} to avoid confusion with the notion of 1-affineness below.}  We will, once and for all, choose coordinates for their rings of global functions
$$\cO(B\bG_a) \simeq k[\eta], \;\;\; |\eta| = 1 \;\;\;\;\;\;\;\;\;\;\;\;\; \cO(B^2\bG_a) \simeq k[u], \;\;\; |u|=2.$$
As discussed in Section 3 of \cite{loops and conns} (see also the introduction of \cite{MRT}), the stack $B\bG_a$ may be realized as the affinization of the circle $S^1$.  Furthermore, we may assign $\eta$ and $u$ a weight-grading of 1, defining $\bG_m$-actions on $B\bG_a$ and $B^2\bG_a$.  
\begin{defn}
We define the \emph{graded 1-shifted} and \emph{2-shifted affine lines} by
$$\BGA := B\bG_a \rtimes \bG_m, \;\;\;\;\;\;\;\;\;\; B^2\bG_a^{\gr} := B^2\bG_a \rtimes \bG_m = B(\BGA).$$
\end{defn}

\subsubsection{}  We make some general remarks on group actions on categories; see \cite{1affine, beraldo} for precise statements and details.  Let $G$ be a group object in prestacks.   A priori, there are a few notions of what one might mean by categories with a $G$-action.
\begin{enumerate}
\item The category $\QCoh(G)$ is a comonoidal category under pullback along the group multiplication and pullback along the inclusion of the unit.\footnote{It is also a monoidal category under the pullback along the diagonal and projection to a point, making it a bialgebra (in fact, Hopf) category.  This bialgebra structure is used to define the invariants operation, but we will suppress it throughout the discussion.}  Then, we may consider $\QCoh(G)$-comodule categories.  If $\QCoh(G)$ is dualizable, we may equivalently consider $\QCoh(G)^\vee$-module categories; if $\QCoh(G)^\vee$ is semi-rigid, then $\QCoh(G)$ is automatically self-dual and the monoidal category $\QCoh(G)$ arises from the comonoidal category by passing to right adjoints.  For details, see \cite[\textsection 3.2]{ctcg}\cite[\textsection D.3]{1affine}.
\item The category $\QCoh(BG)$ is a monoidal category under tensor product, and we may consider $\QCoh(BG)$-module categories.
\end{enumerate}
When the stack $BG$ is 1-affine, the two notions are equivalent and related by the equivariantization and de-equivariantization functors:
$$\begin{tikzcd}[column sep=30ex]
 \cat{Mod}(\QCoh(BG)) \arrow[r ,shift left=0.9ex, "{\cat{D} \mapsto \cat{D} \otimes_{\QCoh(BG)} \cat{Vect}_k}"] & \cat{Comod}(\QCoh(G)). \arrow[l, shift left=0.9ex, "\cat{C} \mapsto \cat{C}^{\QCoh(G)}", "\simeq"']
\end{tikzcd}$$
Typical examples of group prestacks $G$ for which $BG$ is 1-affine are when $G$ is an affine algebraic group.  Taking our example of interested $G = B\bG_a$ (or $G = B\bG_a^{\gr}$), we note that by \cite[Thm. 2.5.7]{1affine}, the stacks $B^2\bG_a$ and $B^2\bG_a^{\mathrm{gr}}$ are 1-affine, thus we have a (de-)equivariantization correspondence.

\subsubsection{}\label{def B} In light of this we now give an explicit description of the monoidal categories $\QCoh(B^2\bG_a)$ and $\QCoh(B^2\bG_a^{\gr})$.  We let $V^* := \cO(\bG_a)$, noting that $\cO(B^2\bG_a) = \Sym_k V^*[-2] \simeq k[u]$, and let $\mathbb{B} := \{0\} \times_V \{0\}$.  The category $\Coh(\mathbb{B})$ is monoidal category under convolution; explicitly, the category $\QCoh(\mathbb{B})$ is equivalent to the category of \emph{mixed complexes}, i.e. chain complexes $W$ with a degree $-1$ chain map $\epsilon: W \rightarrow W[-1]$ such that $\epsilon^2 = 0$, and the convolution of two mixed complexes is given by
$$(W_1, \epsilon_1) \star (W_2, \epsilon_2) := (W_1 \otimes_k W_2, \epsilon_1 \otimes \mathrm{id} + \mathrm{id} \otimes \epsilon_2).$$
The following proposition is a combination of the results in \cite[\textsection 3.4]{loops and conns} and \cite[\textsection 3]{MF}.
\begin{prop}\label{bga}
Let $q: \mathbb{B} \rightarrow \Spec k$ denote the projection and $p: \Spec k \rightarrow B^2\bG_a$ the standard atlas.  We let $\Mod_{u\ng\mathrm{nil}}(k[u])$ be the category of locally $u$-nilpotent complexes.  We have commuting monoidal equivalences and functors
$$\begin{tikzcd}[column sep=15ex]
& \cat{Vect}_k &   \\
\QCoh(B^2\bG_a) \arrow[ur, "p^*"] \arrow[r, "\simeq"] & \QCoh(\mathbb{B}) \arrow[r, "\simeq", "- \otimes_{\cO(\bB)} k"'] \arrow[u, "q_*"] & \Mod_{u\ng\mathrm{nil}}(\cO(B^2\bG_a)).  \arrow[ul, "{R\Hom_{\cO(B^2\bG_a)}(k, -)}"'] 
\end{tikzcd}$$
Furthermore, $\Perf(B^2\bG_a)$ is identified with the full subcategory $\Coh(\bB) \subset \QCoh(\bB)$, and the full subcategory of $\Mod_{u\ng\mathrm{nil}}(\cO(B^2\bG_a))$ consisting of modules $M$ such that $R\Hom_{k[u]}(k, M)$ has finite cohomology.\footnote{For example, the module $M = k[u, u^{-1}]/k[u]$ is such a module which is not finitely generated.}  

In the graded setting, let $i: \{0\}/\bG_m \hookrightarrow \wh{\bA}^1/\bG_m$ be the inclusion; we have commuting monoidal equivalences and functors
$$\begin{tikzcd}[column sep=15ex]
& \Rep(\bG_m)& \\
\QCoh(B^2\bG_a^{\gr}) \arrow[ur, "p^*"] \arrow[r, "\simeq"]& \QCoh(\mathbb{B}/\bG_m) \arrow[r, "\simeq", "{(- \otimes_{\cO(\bB)} k)^{\unshear}}"'] \arrow[u, "q_*"] &  \IndCoh(\wh{\bA}^1/\bG_m).\arrow[ul, "{\shear \circ i^!}"']
\end{tikzcd}$$
We have similar identifications of the full subcategory of perfect complexes.  In particular, perfect complexes on $B^2\bG_a$ and  $B^2\bG_a^{\gr}$ may fail to be compact.
\end{prop}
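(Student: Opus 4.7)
The plan is to establish both chains of equivalences in parallel, handling the ungraded case first and then inserting $\bG_m$-equivariance together with the Tate shearing of Definition \ref{tate shear} for the graded version. In the ungraded setting I will decompose the claimed equivalence as $\QCoh(B^2\bG_a) \simeq \QCoh(\bB) \simeq \Mod_{u\ng\mathrm{nil}}(k[u])$, where the first step is a descent equivalence along the atlas $p$ and the second is a Koszul duality between the exterior algebra $k[\eta]$ and the polynomial algebra $k[u]$.

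For the first equivalence, I would apply Barr-Beck-Lurie to the colimit-preserving pullback $p^* \colon \QCoh(B^2\bG_a) \to \cat{Vect}_k$, which is conservative since $p$ is a cover of the coaffine stack $B^2\bG_a$. The Cech nerve computes the resulting comonoid as $\cO(\Omega B^2\bG_a) \simeq \cO(B\bG_a) \simeq k[\eta']$ with $|\eta'| = 1$. Unwinding the coaction, a comodule over $k[\eta']$ is exactly a mixed complex $(V, \epsilon \colon V \to V[-1])$ with $\epsilon^2 = 0$, i.e., a module over $\cO(\bB) = k[\eta]$ with $|\eta|=-1$; this is $\QCoh(\bB)$. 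The comonoidal structure on $\cO(B\bG_a)$ inherited from the group multiplication on $B\bG_a$ translates to the convolution product $\star$ on mixed complexes, giving the monoidal identification.

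For the second equivalence, I would use the computation $R\End_{k[\eta]}(k) \simeq k[u]$ with $|u| = 2$, obtained from the infinite free resolution $\cdots \to k[\eta]\langle 2\rangle \xrightarrow{\eta} k[\eta]\langle 1\rangle \xrightarrow{\eta} k[\eta] \to k$ together with a direct degree count. The functor $M \mapsto M \otimes^L_{k[\eta]} k$ then acquires a $k[u]$-linear structure through this endomorphism algebra, and I would check it lands in the $u$-nilpotent subcategory on the compact generators: $k[\eta]$ maps to $k$ (where $u$ acts as zero), and $k$ itself maps to the divided-power module $k[u]^\vee$, all of whose elements are $u$-nilpotent. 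To verify this is an equivalence with inverse $R\Hom_{k[u]}(k, -)$, I would check that $k$ is a compact generator of $\Mod_{u\ng\mathrm{nil}}(k[u])$ --- compactness since $\Hom(k, -) = \ker(u)$ commutes with filtered colimits of $u$-nilpotent modules, and generation since any $u$-nilpotent module is exhausted by the kernels $\ker(u^n)$ --- and then invoke Morita/Barr-Beck-Lurie.

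The statements about perfect objects follow by tracking compactness through the equivalences: $\Perf(B^2\bG_a)$ pulls back to $\Perf(\cat{Vect}_k)$ under $p^*$, corresponding to mixed complexes of finite total dimension (i.e., $\Coh(\bB)$) and equivalently to modules $M$ with $R\Hom_{k[u]}(k, M)$ bounded with coherent cohomology. The graded version is obtained by running the same atlas and Koszul duality arguments $\bG_m$-equivariantly, recovering the equivalence $\QCoh(B^2\bG_a^{\gr}) \simeq \QCoh(\bB/\bG_m)$, and then identifying the latter with $\IndCoh(\wh{\bA}^1/\bG_m)$ after the Tate shearing $\unshear$ --- this last step is an instance of Theorem \ref{kd graded no functoriality} applied to the derived vector bundle $\bA^1[1]$ over $\Spec k$. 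The main technical obstacle will be tracking compatibility of the various (co)monoidal structures through the chain of equivalences, in particular verifying that the coalgebra structure on $\cO(B\bG_a)$ descends to the convolution on mixed complexes and then matches the tensor product on $\Mod(k[u])$; this is where the detailed analyses of \cite{loops and conns} and \cite{MF} supply the necessary input.
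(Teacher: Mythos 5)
The paper does not actually prove this proposition: it is stated as ``a combination of the results in \cite[\textsection 3.4]{loops and conns} and \cite[\textsection 3]{MF}'' and no argument is given. Your proposal is therefore doing more work than the paper, and the route you take --- comonadic descent along the atlas $p\colon \Spec k \to B^2\bG_a$ to identify $\QCoh(B^2\bG_a)$ with comodules over $\cO(B\bG_a)=k[\eta']$, hence with $\Mod(k[\epsilon])=\QCoh(\bB)$, followed by Koszul duality $-\otimes_{k[\epsilon]}k$ onto the localizing subcategory of $\Mod(k[u])$ generated by the compact object $k$ --- is exactly the argument of the cited references, and your computations ($R\End_{k[\epsilon]}(k)\simeq k[u]$, $k\mapsto k[u]^\vee$, compact generation of $\Mod_{u\ng\mathrm{nil}}(k[u])$ by $k$, identification of $\Perf$ with the objects of finite underlying dimension) are all correct. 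Two points deserve more care than you give them. First, the descent step is not automatic from ``$p$ is a cover'': $B^2\bG_a$ is not an Artin stack, so conservativity of $p^*$, the identification $\QCoh(B^2\bG_a)\simeq \Tot(\QCoh((B\bG_a)^{\times\bullet}))$, and the base-change isomorphism $p^*p_*\simeq -\otimes_k\cO(B\bG_a)$ all rest on the theory of coaffine stacks in characteristic zero (To\"en, Lurie); this is precisely the input the paper outsources to \cite{loops and conns}. Second, the monoidal statement on the right-hand column is subtler than ``tracking compatibility'': as the paper remarks immediately after the proposition, the transported monoidal structure on $\Mod_{u\ng\mathrm{nil}}(k[u])$ and on $\IndCoh(\wh{\bA}^1/\bG_m)$ is \emph{not} the usual tensor product but a $!$-analogue, so the convolution on mixed complexes does not match the naive tensor product of $k[u]$-modules; your deferral is acceptable but the expected answer is genuinely different from what a reader might assume. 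With those caveats your proposal is a faithful and correct reconstruction of the proof the paper delegates to the literature.
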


The relationship between $\cO(\bB)$-modules and $\cO(B^2\bG_a)$-modules is essentially given by the standard Koszul duality calculation $\End_{\cO(B^2\bG_a)}(k, k) \simeq \cO(\bB)$.  Dually, we have $\cO(B^2\bG_a) \simeq \End_{\cO(\bB)}(k, k).$  Note that we have not defined the monoidal structure on $\Mod_{u-\mathrm{nil}}(\cO(B^2\bG_a))$ nor $\IndCoh(\bA^1/\bG_m)$; it is \emph{not} the usual tensor product but a $!$-analogue.  We will not address this since, as we will soon see, these are not the categories that we wish to work with.

\subsubsection{} An important construction in the setting of mixed complexes is the \emph{Tate construction}.  For example, the Hochschild homology of a category is naturally a mixed complex whose negative cyclic homology is obtained by taking $S^1$-invariants (or, in our setting, $\cO(\bB)$-invariants), and its periodic cyclic homology is obtained inverting $u$ (see \cite{Ch} for details).  The Tate construction is the composition of these two processes.

However, in Proposition \ref{bga} all $k[u]$-modules are $u$-torsion, so inverting $u$ for $\QCoh(B^2\bG_a)$-module categories always yields the zero category.  Roughly speaking, this is because we passed from $\QCoh(\bB)$ to $\Mod(\cO(B^2\bG_a))$ via the coinvariants functor $- \otimes_{\cO_{\bB}} k$, which is right t-exact and produces $u$-nilpotent modules.  The invariants functor $\Hom_{\cO_{\bB}}(k, -)$ is left t-exact, but it is not continuous since the augmentation module $k \in \QCoh(\bB)$ is not compact; to make the invariants functor continuous we require a renormalization, i.e. we replace $\QCoh(\bB)$ with $\IndCoh(\bB)$, or equivalently replace the compact objects $\Perf(\bB)$ with $\Coh(\bB)$.

\medskip

This small category $\Coh(\bB)$ appears naturally as the $B\bG_a$-invariants of the trivial action on the small category $\Perf(\Spec k)$.  In particular, if we take $B\bG_a$-invariants of the small category $\Perf(\Spec k)$, then ind-complete, we obtain $\IndCoh(\bB)$; if we reverse the order, we obtain $\QCoh(\bB)$.  This suggests to ``fix'' the Tate construction by taking invariants of small categories.  An alternative ``fix'' was explored in \cite{toly indcoh} by renormalizing using t-structures.

\subsubsection{} Given a group prestack $G$ acting on $X$, it is often unclear whether $\QCoh(G)$ is compactly generated or what the compact objects are, and the monoidal structure often does not restrict to small subcategories.  However, it is tautological that $\Perf(G) \subset \QCoh(G)$ is a small comonoidal subcategory coacting on $\Perf(X)$.  Let us suppose for notational simplicity that the group structure on $G$ is given by multiplication and unit maps $m: G \times G \rightarrow G$ and $e: \Spec k \rightarrow G$, and that the action is given by a map $a: G \times X \rightarrow X$, and projection $p: G \times X \rightarrow X$.  Then, may realize the invariants as a limit, and by descent obtain identifications of the $\Perf(G)$-invariants of $\Perf(X)$:
$$\begin{tikzcd}
\QCoh(X/G) \simeq \QCoh(X)^{\QCoh(G)} \arrow[r] & \QCoh(X) \arrow[r, shift right=1.2ex, "a^*"] \arrow[r, shift left=1.2ex, "p^*"] & \QCoh(G \times X) \arrow[r, "m^*"] \arrow[r, shift left=2.4ex, "p^*"] \arrow[r, shift right=2.4ex, "a^*"] & \QCoh(G \times G \times X) \cdots \\
\Perf(X/G) \simeq \Perf(X)^{\Perf(G)} \arrow[u, hook] \arrow[r] & \Perf(X) \arrow[r, shift right=1.2ex, "a^*"] \arrow[r, shift left=1.2ex, "p^*"] \arrow[u, hook]& \Perf(G \times X) \arrow[r, "m^*"] \arrow[r, shift left=2.4ex, "p^*"] \arrow[r, shift right=2.4ex, "a^*"] \arrow[u, hook]& \Perf(G \times G \times X) \cdots
\end{tikzcd}$$
In particular, if $X/G$ is a \emph{perfect stack} (i.e. compactly generated by perfect complexes), then we may commute the operations of ind-completion and $\Perf(G)$-invariants.  Since $X/G$ is a perfect stack when $G$ is affine algebraic and $X$ is quasiprojective \cite{BFN}, in the traditional setting of affine algebraic group actions on reasonable schemes, we can ignore the issue of large vs. small categories when taking invariants.  However, as we observed in Proposition \ref{bga}, $B^2\bG_a$ is not perfect, which manifests in the difference between $\IndCoh(\bB)$ and $\QCoh(\bB)$.

\subsubsection{} We now give a precise definition of categorical invariants.
\begin{defn}\label{ren G inv}
Let $G$ be a group object in prestacks, and $\cat{C}_0 \in \cat{St}_k$ a small comodule category for $\Perf(G)$.  We define the $\Perf(G)$-invariants to be the limit (i.e. totalization)
$$\cat{C}_0^G := \lim \left( \begin{tikzcd}[column sep=huge] \cat{C}_0 \arrow[r, shift right=1.2ex, "\gamma"] \arrow[r, shift left=1.2ex, "p^* \otimes \mathrm{id}_{\cat{C}_0}"] & \Perf(G) \otimes \cat{C}_0 \arrow[r, "m^*"] \arrow[r, shift left=2.4ex, "p^* \otimes \mathrm{id}_{\Perf(G) \otimes \cat{C}_0}"] \arrow[r, shift right=2.4ex, "\mathrm{id}_{\Perf(G)} \otimes \gamma"] & \Perf(G \times G) \otimes \cat{C}_0  \arrow[r, shift right=1.5ex] \arrow[r, shift right=.5ex] \arrow[r, shift left=.5ex] \arrow[r, shift left=1.5ex] & \cdots \end{tikzcd} \right).$$
This category is naturally a module category for $\Perf(k)^G = \Perf(BG)$.  Taking $\cat{C} := \Ind(\cat{C}_0)$, we define the \emph{category of compactly renormalized $G$-invariants} by 
$$\cat{C}^{\omega G} := \Ind(\cat{C}_0^G) = \Ind((\cat{C}^\omega)^G)$$
which is naturally a module category for $\cat{Vect}_k^{\omega G} = \Ind(\Perf(BG))$.  This category comes equipped with a \emph{forgetful functor} $\cat{C}^{\omega G} \rightarrow \cat{C}$ which is the unique colimit-preserving functor which restricts to the usual forgetful functor $\cat{C}_0^G \rightarrow \cat{C}$ on compact objects.  These constructions are evidently functorial for $\Perf(G)$-equivariant functors on small categories.
\end{defn}

\subsubsection{} 
We now specialize to the case $G = B\bG_a$, again fixing a degree 2 polynomial generator $u \in \cO(B^2\bG_a)$.  Here, $\Perf(k)^{B\G_a} \simeq \Perf(\cO(B^2\bG_a))$, and therefore $\Ind(\Perf(B\bG_a)) \simeq \Mod(\cO(B^2\bG_a))$.  In particular, $\cat{C}^{\omega B\bG_a}$ is a $\Mod(\cO(B^2\bG_a))$-module category.  We will also consider the graded setting $G = B\bG_a^{\gr}$, where $\cat{C}^{\omega B\bG_a^{\gr}, \unshear}$ is a $\Mod(\cO(B^2\bG_a))^{\bG_m, \unshear} \simeq \QCoh(\bA^1/\bG_m)$-module category after unshearing.

\begin{defn}\label{def tate}
We let $k(u) := k[u, u^{-1}]$, i.e. the graded fraction field of $k[u] = \cO(B^2\bG_a)$, and define $\cat{Vect}^{\Tate} := \Mod(k(u))$.  Let $\cat{C}_0$ be a $\Perf(B\bG_a)$-comodule category, and take $\cat{C} = \Ind(\cat{C}_0)$.  We define the \emph{Tate construction} by
$$\cat{C}^{\Tate} := \cat{C}^{\omega B\bG_a} \otimes_{\cat{Vect}^{\omega B\bG_a}} \cat{Vect}^{\Tate} \simeq \cat{C}^{\omega B\bG_a} \otimes_{\Mod(k[u])} \Mod(k(u)).$$
The resulting category is a compactly generated $\QCoh(k(u))$-module category whose monoidal structure restricts to compact objects, i.e. a compactly generated 2-periodic category.  We define the \emph{graded Tate construction} by
$$\cat{C}^{\grTate} := \cat{C}^{\omega B\bG_a^{\gr}} \otimes_{\QCoh(\bA^1/\bG_m)} \QCoh(\bG_m/\bG_m)$$
which is a compactly generated $k$-linear category (i.e. without periodicity).  These constructions are evidently functorial.
\end{defn}

We have the following renormalized version of Proposition \ref{bga} which we state for comparison, keeping the same notation, and recalling that the monoidal structure on $\IndCoh(\bB)$ is by convolution and on $\QCoh(\cO(B^2\bG_a))$ is now by the usual tensor product.
\begin{prop}\label{ren bga}
We have commuting monoidal equivalences
$$\begin{tikzcd}[column sep=15ex]
& \cat{Vect}_k &   \\
\cat{Vect}^{\omega B\bG_a} \arrow[ur] \arrow[r, "\simeq"] & \IndCoh(\mathbb{B}) \arrow[r, "\simeq", "{R\Hom_{\bB}(k, -)}"'] \arrow[u, "q_*"] & \Mod(\cO(B^2\bG_a)),  \arrow[ul, "{- \otimes_{\cO(B^2\bG_a)} k}"'] 
\end{tikzcd}$$
$$\begin{tikzcd}[column sep=15ex]
& \Rep(\bG_m)& \\
\cat{Vect}^{\omega B\bG_a^{\gr}} \arrow[ur] \arrow[r, "\simeq"]& \IndCoh(\mathbb{B}/\bG_m) \arrow[r, "\simeq", "{R\Hom_{\bB}(k, -)^{\unshear}}"'] \arrow[u, "q_*"] &  \QCoh(\bA^1/\bG_m).\arrow[ul, "{\shear \circ i^*}"']
\end{tikzcd}$$
\end{prop}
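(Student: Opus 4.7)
The plan is to prove this as the compactly-renormalized analogue of Proposition \ref{bga}, exploiting the fact that $k \in \IndCoh(\bB)$ is compact (whereas it is not in $\QCoh(\bB)$), so Koszul duality with respect to $k$ now produces all of $\Mod(\cO(B^2\bG_a))$ rather than just the $u$-nilpotent submodule. I will describe the non-graded case; the graded case then follows by the same argument applied to $B^2\bG_a^{\gr}$, using the graded version of Proposition \ref{bga} and the identification of graded $\cO(B^2\bG_a)$-modules (internal grading $|u|=2$) with $\QCoh(\bA^1/\bG_m)$ after the Tate unshearing $\unshear$.

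For the first equivalence $\cat{Vect}^{\omega B\bG_a} \simeq \IndCoh(\bB)$, I would unwind Definition \ref{ren G inv} to write $\cat{Vect}^{\omega B\bG_a} = \Ind(\Perf(k)^{B\bG_a})$. By descent along the cover $\pt \to B^2\bG_a$, the small totalization $\Perf(k)^{B\bG_a}$ is identified with $\Perf(B^2\bG_a)$, which by Proposition \ref{bga} is monoidally equivalent to $\Coh(\bB)$ with the convolution structure. Ind-completing yields $\IndCoh(\bB)$ with its convolution monoidal structure.

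For the second equivalence $\IndCoh(\bB) \simeq \Mod(\cO(B^2\bG_a))$, I would apply the standard compact-generator argument: $k \in \Coh(\bB) \subset \IndCoh(\bB)$ is a compact generator, and the standard Koszul calculation for the exterior algebra $\cO(\bB) \simeq k[\epsilon]$ with $|\epsilon|=1$ gives $\REnd_\bB(k) \simeq k[u] = \cO(B^2\bG_a)$. Hence $\RHom_\bB(k,-) : \IndCoh(\bB) \xrightarrow{\simeq} \Mod(\cO(B^2\bG_a))$ is an equivalence with inverse $-\otimes_{\cO(B^2\bG_a)} k$. Monoidality follows because $k$ is the convolution unit in $\IndCoh(\bB)$, so $\RHom_\bB(k,-)$ is lax monoidal from convolution to the standard tensor product on modules over the commutative algebra $\cO(B^2\bG_a)$ and, being an equivalence, it is strong monoidal. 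The commuting right triangle is then checked on the compact generator: $q_*(k) = k$ while $\RHom_\bB(k,k) \otimes_{\cO(B^2\bG_a)} k = k[u] \otimes_{k[u]} k = k$, and both composed functors commute with colimits. The left triangle commutes because the forgetful $\cat{Vect}^{\omega B\bG_a} \to \cat{Vect}_k$ is by definition the ind-completion of the pullback along $\pt \to B^2\bG_a$, which under $\Perf(B^2\bG_a) \simeq \Coh(\bB)$ is $q_*$ (using the identification already appearing in Proposition \ref{bga}).

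The main obstacle I anticipate is making precise the identification of the small totalization $\Perf(k)^{B\bG_a}$ with $\Perf(B^2\bG_a)$: this requires invoking descent in a setting where $B^2\bG_a$ is not a quasi-compact algebraic stack and where Proposition \ref{bga} explicitly warns that $\Perf(B^2\bG_a)$ need not coincide with the compact objects of $\QCoh(B^2\bG_a)$. The monoidality claims, while conceptually clear via the convolution-vs-tensor-product duality and the unit property of $k$, also require careful bookkeeping through all of the equivalences.
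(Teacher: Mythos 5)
The paper states Proposition \ref{ren bga} without proof, presenting it as the renormalized counterpart of Proposition \ref{bga}, and your route is the intended one: identify $\cat{Vect}^{\omega B\bG_a} = \Ind(\Perf(k)^{B\bG_a})$ with $\Ind(\Perf(B^2\bG_a)) \simeq \Ind(\Coh(\bB)) = \IndCoh(\bB)$ using descent together with the last clause of Proposition \ref{bga} (the identification $\Perf(k)^{G} = \Perf(BG)$ is already asserted in Definition \ref{ren G inv}, so your worry about this step is not an obstacle), and then apply the Barr--Beck/Schwede--Shipley equivalence for the compact generator $k$ with $\REnd_{\bB}(k) \simeq k[u]$. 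The graded case and the unshearing are handled correctly.

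Two steps need tightening. First, the inference ``$\RHom_{\bB}(k,-)$ is lax monoidal and an equivalence of underlying categories, hence strong monoidal'' is not valid in general: a unital lax monoidal functor whose underlying functor is an equivalence can fail to be strong monoidal (unitality only forces invertibility of the structure maps involving the unit). The correct argument here is that the lax structure maps $\RHom_{\bB}(k,X)\otimes_{k[u]}\RHom_{\bB}(k,Y)\to\RHom_{\bB}(k,X\star Y)$ preserve colimits separately in $X$ and $Y$ --- because $k$ is compact in $\IndCoh(\bB)$ and convolution is colimit-preserving in each variable --- and are tautologically equivalences for $X=Y=k$; since $k$ generates $\IndCoh(\bB)$ under colimits and shifts, they are equivalences for all $X,Y$. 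Second, verifying the right-hand triangle ``on the compact generator'' is not by itself enough to identify two functors: you should record that the identification $q_*(k)\simeq k\simeq k[u]\otimes_{k[u]}k$ is compatible with the $\REnd_{\bB}(k)\simeq k[u]$-actions (both send $u$ to the zero endomorphism of $k$), after which the two colimit-preserving functors out of $\IndCoh(\bB)\simeq\Mod(k[u])$ agree. With these repairs the argument is complete.
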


\subsection{Mixed \texorpdfstring{$\Omega$}{Omega}-complexes}\label{sec omega}

In this section we introduce one of the categories on the two sides of Koszul duality: the category of $B\bG_a$-equivariant sheaves on the odd tangent bundle $\bT_X[\ng1]$.  We begin with some generalities.

\subsubsection{}\label{bimod alg} To formulate the notion of modules for $\cD_X$ and other sheaves of non-commutative algebras, we will use the following $\infty$-categorical framework.  We consider the category $\QCoh(X \times X)$ as a monoidal $\infty$-category under $*$-convolution (denoted $\star$), acting on the module category $\QCoh(X)$; see \cite[\textsection 3]{CD} for a discussion.  More generally, we take as shorthand notation for the graded and filtered variants
$$\QCoh(X \times X)^{\gr} := \QCoh((X \times B\bG_m) \times_{B\bG_m} (X \times B\bG_m)),$$
$$\QCoh(X \times X)^{\fil} := \QCoh((X \times \bA^1/\bG_m) \times_{\bA^1/\bG_m} (X \times \bA^1/\bG_m)),$$
to which all the following discussion also applies.

\medskip

We will consider algebra objects $\cA \in \QCoh(X \times X)$ which may be viewed roughly as objects $\cA_X \in \QCoh(X \times X)$ with a multiplication $\cA_X \star \cA_X \rightarrow \cA_X$ and unit $\Delta_*\cO_X \rightarrow \cA_X$ maps; for precise definitions, we refer the reader to the discussion in \cite[\textsection A.2]{CD} (which itself is a summary of results in \cite{HA}).  If $\cA_X$ is supported along the diagonal of $X$, we say $\cA_X \in \QCoh_\Delta(X \times X)$ is a \emph{$\cO_X$-bimodule algebra}, since convolution in this case is just the usual tensor product of bimodules.

\begin{defn}
Let $X$ be a perfect stack, and $\cA_X$ an algebra object in $(\QCoh(X \times X), \star)$.  The \emph{category of $\cA_X$-modules} $\Mod(\cA_X)$ is the $\infty$-category of $\cA_X$-module objects in the $\QCoh(X \times X)$-module category $\QCoh(X)$.  Given another $\cB_Y$, the \emph{category of $(\cA_X, \cB_Y)$-bimodules} $\Mod(\cA_X, \cB_Y)$ is the category of $\cA_X \boxtimes \cB_Y$-module objects in the $\QCoh(X^2 \times Y^2)$-module category $\QCoh(X \times Y)$.
\end{defn}

There is a canonical functor sending $(\cA_X, \cB_Y)$-bimodules to functors:
\begin{equation}\label{bimod eqn}
\Mod(\cA_X, \cB_Y) \longrightarrow \cat{Fun}_k^L(\Mod(\cA_X), \Mod(\cB_X)),
\end{equation}
$$\cK \longmapsto - \otimes_{\cA_X} \cK := \colim\left(\begin{tikzcd}[column sep=5ex] \cdots \arrow[r, shift left=.75ex] \arrow[r, shift left=.25ex] \arrow[r, shift right=.25ex] \arrow[r, shift right=.75ex] & - \star \cA_X \star \cA_X \star \cK \arrow[r, shift left=.5ex] \arrow[r] \arrow[r, shift right=.5ex] & - \star \cA_X \star \cK \arrow[r, shift left = .25ex] \arrow[r, shift right=.25ex] & - \star \cK \end{tikzcd}\right)$$
where $\star$ indicates convolution of objects and the maps are given by the respective module structures.  This assignment is functorial, i.e. there is a canonical equivalence $(- \otimes_{\cA_X} \cK) \otimes_{\cB_Y} \cL \simeq - \otimes_{\cA_X} (\cK \otimes_{\cB_Y} \cL).$  The following Proposition is an consequence of \cite[Prop. 4.1]{BFN}.
\begin{prop}\label{bimod prop}
The $\infty$-category $\Mod(\cA_X)$ is dualizable with dual $\Mod(\cA_X^{\opp})$, and $\Mod(\cA_X) \otimes \Mod(\cB_X) \simeq \Mod(\cA_X \boxtimes \cB_Y)$.  In particular, the functor (\ref{bimod eqn}) is a monoidal equivalence.
\end{prop}

\medskip

In the special case where we have a map of algebra objects $\cA_X \rightarrow \cB_X$ in $\QCoh(X \times X)$, we may view $\cB_X$ as an $(\cA_X, \cB_X)$-bimodule object and as a $(\cB_X, \cA_X)$-bimodule object.  In this case, it defines two functors, which we call the \emph{induction} and \emph{restriction} functors respectively:
$$\ind_{\cA_X}^{\cB_X}: \Mod(\cA_X) \longrightarrow \Mod(\cB_X), \;\;\;\;\;\;\;\;\; \res_{\cA_X}^{\cB_X}: \Mod(\cB_X) \longrightarrow \Mod(\cA_X).$$
By standard arguments splitting simplicial objects, one can check that the underlying functor $\res_{\cA_X}^{\cB_X}: \QCoh(X) \rightarrow \QCoh(X)$ is equivalent to the identity functor.  Furthermore, the functors $(\ind_{\cA_X}^{\cB_X}, \res_{\cA_X}^{\cB_X})$ are adjoint,  since $\cB_X$ is self right-dual and by abstract nonsense the right adjoint to the functor defined by a bimodule is given by its right dual (if it exists).

\medskip

In particular, the map $\Delta_* \cO_X \rightarrow \cA_X$ defined by the identity element is a map of $\cO_X$-bimodule algebras, and gives rise to an adjoint pair of induction and restriction functors
$$\ind^{\cA_X}_{\cO_X}: \begin{tikzcd} \QCoh(X) \arrow[r, shift left] & \Mod(\cA_X) \arrow[l, shift left] \end{tikzcd} :\res^{\cA_X}_{\cO_X}.$$
Since it is a right adjoint, there is a t-structure on $\Mod(\cA_X)$ uniquely defined by the property that $\res^{\cA}_{\cO_X}$ is left t-exact; furthermore, it follows that $\ind^{\cA_X}_{\cO_X}$ is right t-exact.  In general, the inductions $\ind_{\cA_X}^{\cB_X}$ are right t-exact and restrictions $\res_{\cA_X}^{\cB_X}$ left t-exact for these t-structures.

\subsubsection{}  We now discuss $\Omega$-complexes.  Let $X$ be a smooth stack.  We define a $B\bG_a \rtimes \bG_m$-action on $\bT_X[\ng1]$ as follows.  First, we note that the unipotent loop space $\cL^u X := \Map(B\bG_a, X)$ has a canonical $B\bG_a$-action, and $S^1$-equivariant maps $X \rightarrow \cL^u X \rightarrow \cL X$.  We claim the following (c.f. \cite[Prop. 2.1.24]{Ch}).
\begin{prop}
Let $X$ be a prestack admitting a cotangent complex, and let $i: \cL^uX \rightarrow \cL X$ denote the map induced by the affinization of $S^1$.  Then, $\cL^uX$ and $\cL X$ admit cotangent complexes and $i^* \Omega^1_{\cL X} \rightarrow \Omega^1_{\cL^u X}$ is an equivalence.  In particular $\Omega^1_{X/\cL X} \rightarrow \Omega^1_{X/\cL^u X}$ is an equivalence, and $\wh{\bT}_X[\ng1]$ is both the completed normal bundle for both $X \subset \cL^u X$ and $X \subset \cL X$, thus admits a canonical $B\bG_a^{\gr} = B\bG_a \rtimes \bG_m$-action compatible with the $B\bG_a$-action on $\cL^u X$ and the $S^1$-action on $\cL X$.
\end{prop}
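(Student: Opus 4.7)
The plan is to reduce everything to a statement about affinization.  First, I would establish that both $\cL X = X \times_{X \times X} X$ and $\cL^u X = \Map(B\bG_a, X)$ admit cotangent complexes.  For $\cL X$ this is immediate from the standard behavior of cotangent complexes under fiber products since $X$ admits one.  For $\cL^u X$, since $B\bG_a$ is a coaffine stack with $\cO(B\bG_a) \simeq k[\eta]$ of finite Tor-amplitude, one can present $\cL^u X$ via descent from affine charts of $X$ and invoke the functoriality of cotangent complexes for mapping prestacks into targets admitting one.  In both cases the tangent complexes at a point are computed by the standard formula
$$T_g \cL^u X \simeq R\Gamma(B\bG_a,\, g^* T_X), \qquad T_f \cL X \simeq R\Gamma(S^1,\, f^* T_X),$$
where $f = g \circ s$ and $s : S^1 \to B\bG_a$ is the affinization.

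Given this, the comparison map $i^*\Omega^1_{\cL X} \to \Omega^1_{\cL^u X}$ at a point $g: B\bG_a \to X$ is (dual to) the natural pullback map
$$R\Gamma(B\bG_a,\, g^*T_X) \longrightarrow R\Gamma(S^1,\, s^* g^* T_X).$$
The key step is to show this is an equivalence.  This is where I expect the main technical work to be, though the content is essentially the universal property of affinization: the map $s$ is characterized by $\cO(B\bG_a) \risom \cO(S^1)$, and more generally $s^*: \QCoh(B\bG_a) \to \QCoh(S^1)$ is an equivalence of symmetric monoidal categories with $R\Gamma \circ s^* \simeq R\Gamma$, since both are just the categories of $k[\eta]$-modules with $|\eta|=1$ and global sections is the underlying module functor in either presentation.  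Applying this to the perfect (or almost perfect) complex $g^* T_X$ yields the claimed equivalence on tangent complexes, hence on cotangent complexes.

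The remaining assertions are formal consequences.  Restricting the equivalence $i^* \Omega^1_{\cL X} \simeq \Omega^1_{\cL^u X}$ along the inclusion $j: X \hookrightarrow \cL^u X$ of constant loops and comparing the two cotangent cofiber sequences
$$j^*\Omega^1_{\cL X} \to \Omega^1_X \to \Omega^1_{X/\cL X}, \qquad j^*\Omega^1_{\cL^u X} \to \Omega^1_X \to \Omega^1_{X/\cL^u X},$$
gives the equivalence $\Omega^1_{X/\cL X} \simeq \Omega^1_{X/\cL^u X}$; by HKR both identify with $\Omega^1_X[-1]$.  Consequently the completed normal bundles of $X$ in $\cL^u X$ and in $\cL X$ coincide with $\wh\bT_X[\ng1]$.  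Finally, the natural $B\bG_a$-action on $\cL^u X$ by precomposition with the group law on $B\bG_a$ fixes the constant loops $X$, so it induces a $B\bG_a$-action on the completed normal bundle $\wh\bT_X[\ng1]$; the scaling $\bG_m$-action on $B\bG_a$ promotes this to a $B\bG_a^{\gr} = B\bG_a \rtimes \bG_m$-action, which is $S^1$-equivariantly compatible with the loop rotation action on $\cL X$ via $i$ because $s: S^1 \to B\bG_a$ is itself equivariant for the affinized group structures.
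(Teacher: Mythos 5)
Your overall strategy is the same as the paper's: both reduce the statement to a comparison, along the affinization map $a\colon S^1 \to B\bG_a$, of the (co)tangent complex formula for mapping stacks. But the justification of the crucial step is based on a false claim. You assert that $s^*\colon \QCoh(B\bG_a) \to \QCoh(S^1)$ is an equivalence because ``both are just the categories of $k[\eta]$-modules.'' Neither statement is true: $\QCoh(S^1) = \QCoh(B\bZ) \simeq \QCoh(\bG_m)$ (all local systems on the circle, i.e.\ $k[t,t^{-1}]$-modules), while $\QCoh(B\bG_a) = \Rep(\bG_a) \simeq \QCoh(\wh{\bG}_a)$ sees only the unipotent ones; and $\QCoh(B\bG_a)$ is not $\Mod(k[\eta])$ (only $\cO(B\bG_a) \simeq k[\eta]$ — recovering modules from this requires exactly the renormalization issues discussed in Section \ref{sec bga}). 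The affinization map induces an isomorphism on global functions but emphatically not on categories of sheaves; indeed the entire point of distinguishing $\cL X$ from $\cL^u X$ in this paper is that this functor is not an equivalence. What is true, and what the argument actually needs, is that $a^*$ is \emph{fully faithful} — equivalently that the counit $a_+a^* \to \mathrm{id}$ is an equivalence — and this is precisely what the paper proves, by passing through Cartier duality: $(a_+, a^*)$ is identified with $(\wh{\iota}^*, \wh{\iota}_*)$ for the formal-neighborhood inclusion $\wh{\iota}\colon \wh{\bG}_a \hookrightarrow \bG_m$, where $\wh{\iota}^*\wh{\iota}_* \simeq \mathrm{id}$ is standard. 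Your conclusion $R\Gamma(B\bG_a, W) \simeq R\Gamma(S^1, s^*W)$ does follow from full faithfulness plus monoidality of $s^*$, but full faithfulness is the real content and you have not supplied an argument for it.

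A secondary point: the cotangent complex of a mapping stack is computed (via \cite[Prop. 5.1.10]{HP}, as in the paper) by the \emph{left} adjoint $p_{+}$ of $p^*$, not by $R\Gamma = p_*$. Your route of computing tangent complexes as $R\Gamma(Y, g^*T_X)$ and dualizing implicitly requires $\Omega^1_X$ to be perfect, whereas the proposition only assumes $X$ admits a cotangent complex; working directly with $p_+ f^*\Omega^1_X$ and the counit $a_+a^* \to \mathrm{id}$, as the paper does, avoids this.
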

\begin{proof}
For an affine derived scheme $S$, and let $a_S: S^1 \times S \rightarrow B\bG_a \times S$ be the affinization map \cite[Lem. 3.13]{loops and conns}.  We will apply \cite[Prop. 5.1.10]{HP}; consider the diagram
$$\begin{tikzcd}
S \arrow[d, equals] & S^1 \times S \arrow[l, "p_S"'] \arrow[d, "a_S"] \arrow[r, "f \circ a_S"] & X \arrow[d, equals] \\
S & \arrow[l, "q_S"']  B\bG_a \times S \arrow[r, "f"] & X
\end{tikzcd}$$
where $f$ is the map defining a point $\eta \in \cL^uX(S)$ and $f \circ a_S$ is the map defining its image in $\cL{X}(S)$.  Letting $i: \cL^u X \rightarrow \cL X$ denote the map induced by the affinization, we want to show that the map $\eta^* i^* \Omega^1_{\cL X} \rightarrow \eta^* \Omega^1_{\cL^u X}$ is an equivalence.  By \emph{loc. cit.} we have an identification of this map with the counit for the adjunction
$$p_{S+} a_S^* f^* \Omega^1_X \simeq q_{S+} a_{S+} a_S^* f^* \Omega^1_X \rightarrow q_{S+}f^* \Omega^1_X.$$
To see that this counit is an equivalence we pass through Cartier duality, i.e. the equivalences $\QCoh(S^1) = \QCoh(B\bZ) \simeq \QCoh(\bG_m)$ and $\QCoh(B\bG_a) \simeq \QCoh(\wh{\bG}_a)$, under which the adjoint pair $(a_+, a^*)$ becomes the adjunction $\begin{tikzcd} \wh{\iota}^*: \QCoh(\bG_m) \arrow[r, shift left] & \QCoh(\wh{\bG}_a): \wh{\iota}_* \arrow[l, shift left] \end{tikzcd}$, where it is standard that $\wh{\iota}^*\wh{\iota}_* \simeq \mathrm{id}_{\QCoh(\wh{\bG}_a)}$ for the inclusion of a formal neighborhood.
\end{proof}

\subsubsection{} Having defined the $B\bG_a$-action, we can define our category of interest: 
$$\IndCoh(\wh{\bT}_X[\ng1])^{\omega B\bG_a^{\gr}} := \Ind(\Coh(\wh{\bT}_X)^{B\bG_a^{\gr}}).$$
While it is possible to give an explicit description of this renormalization via coderived categories (see \cite{coderived} and \cite[\textsection H.3]{AG}), we will view it as the ind-completion of the full subcategory
$$\Coh(\wh{\bT}_X)^{B\bG_a^{\gr}} \subset \QCoh(\wh{\bT}_X)^{B\bG_a^{\gr}}.$$

\subsubsection{}  We now turn our attention to the case when $X$ is a smooth scheme, and in particular $\wh{\bT}_X[\ng1] = \bT_X[\ng1]$.  In this case, we may identify the $B\bG_a$-action on $\bT_X[\ng1]$ explicitly.  We begin by recalling from \cite{loops and conns} a general method by which one can characterize $B\bG_a^{\gr}$-actions on certain derived schemes satisfying a certain degree-weight parity using purely 1-categorical structures.
The following proposition is established in \cite[Prop. 4.7, Lem. 4.8]{loops and conns}; we summarize the argument for convenience.
\begin{prop}\label{S1 action on shift}
Let $X$ be a derived scheme and $p: \bE_X[\ng1] \rightarrow X$ a derived vector bundle which is perfect in degree $-1$ with shifted locally free sheaf of sections $\cE^\vee[1]$.  The space of linear $\BGA$-actions on $\bE_X[\ng1]$ compatible with the contracting $\bG_m$-action on the fibers is discrete and equivalent to the space of $k$-linear $\cE^\vee$-valued derivations $d: \cO_X \rightarrow \cE^\vee$.
\end{prop}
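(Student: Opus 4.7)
My plan is to reduce the problem to a computation in graded algebra via the $\bG_m$-weight grading. First, a $\BGA$-action on $\bE_X[\ng 1]$ extending the given contracting $\bG_m$-action is equivalent to a $\bG_m$-equivariant $B\bG_a$-coaction on the structure sheaf $\cO_{\bE_X[\ng 1]} = \Sym_X(\cE^\vee[1])$. Using $\cO(B\bG_a) \simeq k \oplus k\eta$ with $|\eta| = 1$ and $\bG_m$-weight $+1$, any such coaction is of the form $\rho(a) = a \otimes 1 + \delta(a) \otimes \eta$, where $\delta$ is a $k$-linear odd graded derivation of cohomological degree $-1$ and $\bG_m$-weight $-1$; the coaction axioms translate exactly to the derivation property together with $\delta^2 = 0$.

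Next I would use linearity. Linearity of the $\BGA$-action forces $\rho(\cE^\vee[1]) \subset \cO(B\bG_a) \otimes \cE^\vee[1]$, equivalently $\delta(\cE^\vee[1]) \subset \cE^\vee[1]$; but in the contracting convention $\cE^\vee[1]$ sits in $\bG_m$-weight $-1$, while $\delta$ drops weight by one, so $\delta(\cE^\vee[1])$ lives in weight $-2$, which intersects $\cE^\vee[1]$ trivially inside $\Sym_X(\cE^\vee[1])$. Hence $\delta|_{\cE^\vee[1]} = 0$, and $\delta$ is determined by $\delta|_{\cO_X}$, a $k$-linear map $\cO_X \to \cE^\vee[1]$ of cohomological degree $-1$; unshearing degree identifies it with a degree $0$ map $d: \cO_X \to \cE^\vee$, and the Leibniz rule for $\delta$ on $\cO_X$ is exactly the derivation law $d(fg) = f\, d(g) + d(f)\, g$. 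Conversely, I would define $\delta$ from $d$ by setting $\delta|_{\cO_X} = d$, $\delta|_{\cE^\vee[1]} = 0$, and extending by Leibniz; the equation $\delta^2 = 0$ then follows automatically since every term of $\delta^2$ contains a factor of $\delta$ applied to an element of $\cE^\vee[1]$.

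Finally I would address discreteness. Any equivalence between two such $\BGA$-actions is a $\bG_m$-equivariant gauge transformation of the underlying coactions, and the same weight analysis forces its nontrivial components to land in weight strata that are unavailable inside $\Sym_X(\cE^\vee[1])$, so only trivial gauge transformations survive. The mapping space of linear $\BGA$-actions compatible with the contracting $\bG_m$-action is therefore $0$-truncated and identifies with the discrete set of derivations $d: \cO_X \to \cE^\vee$. The main technical hurdle will be the careful bookkeeping of signs, cohomological degrees, and $\bG_m$-weights in the super-graded setting, together with a precise justification that ``linear action'' on the derived vector bundle translates to the subsheaf inclusion $\rho(\cE^\vee[1]) \subset \cO(B\bG_a) \otimes \cE^\vee[1]$ that drives the collapse to a discrete set.
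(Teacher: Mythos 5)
Your overall strategy---trading the action for a $\bG_m$-equivariant $k[\eta]$-coaction on $\Sym_X(\cE^\vee[1])$ and letting the weight grading do the work---is the same as the paper's, but the middle step contains a genuine error. Since $\eta$ has bidegree $(1,1)$ and $\cE^\vee[1]$ has bidegree $(-1,-1)$ in the contracting convention, the $\eta$-component $\delta$ of the coaction has bidegree $(-1,-1)$ and therefore raises $\Sym$-degree by exactly one: $\delta(\Sym^n(\cE^\vee[1]))\subset \Sym^{n+1}(\cE^\vee[1])$. In particular $\delta(\cE^\vee[1])$ lands in $\Sym^2(\cE^\vee[1])=\Wdg^2\cE^\vee[2]$, which is nonzero once $\rk \cE\geq 2$, so the weight argument does \emph{not} kill $\delta|_{\cE^\vee[1]}$. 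Your proposed remedy---setting $\delta|_{\cE^\vee[1]}=0$ and extending by Leibniz---is inconsistent: Leibniz gives $\delta(fs)=d(f)\,s+f\,\delta(s)=d(f)\,s\neq 0$ for $fs\in\cE^\vee[1]$. It also contradicts the motivating example: for the canonical action on $\bT_X[\ng1]$ the operator $\delta$ is the de Rham differential, which kills the exact generators $df$ but restricts to $d\colon \Omega^1_X\to\Omega^2_X\neq 0$ on all of $\Omega^1_X[1]$, so $\rho(\Omega^1_X[1])\not\subset\cO(B\bG_a)\otimes\Omega^1_X[1]$ and your reading of ``linear'' would exclude the case the corollary needs. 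Finally, the claim that $\delta^2=0$ is automatic fails: if $d(f)=\sum a_i(f)s_i$ in a local frame on which $\delta$ is declared to vanish, then $\delta^2(f)=\sum_{i<j}[a_i,a_j](f)\,s_is_j$, which is nonzero for a generic derivation $d$. The component $\delta|_{\cE^\vee[1]}$ is honest data of connection/Lie-algebroid type, and the substance of the proposition is precisely that linearity pins it down in terms of $d$ alone; that is the step your argument skips.

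The discreteness claim also needs more than an analysis of gauge transformations: a priori a $\BGA$-action is a homotopy-coherent coaction carrying an infinite tower of higher coherences, not a strict $1$-categorical one, and $0$-truncatedness of the space of such structures is not established by checking $1$-morphisms. The paper disposes of all the coherences at once by observing that $\Sym_X(\cE^\vee[1])$, $k[\eta]$, and all their tensor powers lie in the heart of the $1$-sheared t-structure (weight $=$ degree), so that by the formality result of \cite[App. A]{CD} the $\infty$-categorical structure is determined by its $1$-categorical shadow and the space of such structures is discrete. Your weight bookkeeping is the right instinct, but it should be packaged as this heart/formality statement rather than as an ad hoc check.
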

\begin{proof}
We identify $\cO(B\bG_a) \simeq k[\eta]$.  First, note that a $B\bG_a$-action on a derived scheme is Zariski local, so we may assume that $X$ is affine.  Such an action gives rise to an action map which may be given explicitly by a $k$-linear map
$$\cO_{\bE_X[\ng1]} = \Symp_X \cE^\vee[1] \rightarrow \cO_{\bE_X} \otimes_k k[\eta] = (\Symp_X \cE^\vee[1]) \oplus (\Symp_X \cE^\vee[1])[-1]\langle 1 \rangle$$
satisfying the usual counit and comultiplication identities, i.e. a $k$-linear derivation $\cO_X \rightarrow \cE^\vee[\ng1]$, and higher coherences.  These objects all live in the heart of the 1-sheared t-structure (see Definition \ref{shear tstructure}), thus by \cite[App. A]{CD} $\infty$-module structures are determined by 1-categorical structures.
\end{proof}

By the characterization above, and since $\bT_X[\ng1]$ is the universal derived vector bundle in degree 1 with a linear $\BGA$-action, the following is established in \cite[Prop. 4.4]{loops and conns}.
\begin{cor}
When $X$ is a smooth scheme, the $\BGA$-action on the odd tangent bundle $\bT_X[\ng1]$ encoded by the de Rham differential agrees with the canonical $\BGA$-action under the identification $\Map(B\bG_a, X) \simeq \bT_X[\ng1]$.
\end{cor}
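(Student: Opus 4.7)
The plan is to apply Proposition \ref{S1 action on shift}, which identifies the space of linear $\BGA$-actions on a degree $(\minus 1)$ derived vector bundle $\bE_X[\ng1]$ with the discrete space of $k$-linear derivations $\cO_X \to \cE^\vee$. Specialized to $\bT_X[\ng1]$, where $\cE^\vee = \Omega^1_X$, the de Rham action corresponds tautologically to the de Rham differential $d_{\mathrm{dR}}$. Hence the task reduces to verifying that the derivation $d^{\mathrm{can}}$ attached to the canonical $\BGA$-action from $\cL^u X = \Map(B\bG_a, X) \simeq \bT_X[\ng1]$ is also $d_{\mathrm{dR}}$.

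First I would check that the canonical action is indeed linear, i.e.\ compatible with the contracting $\bG_m$-scaling on the fibers of $\bT_X[\ng1]$. This follows from the $\bG_m$-action on $B\bG_a$ that rescales the generator $\eta$ of $\cO(B\bG_a) \simeq k[\eta]$ with weight $1$: via functoriality of $\Map(-,X)$, this induces precisely the fiber scaling on the normal bundle $\bT_X[\ng1]$ to the constant loops $X \hookrightarrow \cL^u X$.

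The main step is the identification $d^{\mathrm{can}} = d_{\mathrm{dR}}$. Since both the formation of $\Map(B\bG_a,-)$ and the HKR identification $\cL^u(-) \simeq \bT_{(-)}[\ng1]$ are natural in \'etale maps of smooth schemes, $d^{\mathrm{can}}$ defines a natural $k$-linear derivation $\cO_\bullet \to \Omega^1_\bullet$ on the category of smooth schemes. By the universal property of the cotangent complex, such a natural derivation corresponds to a natural endomorphism of the functor $\Omega^1_\bullet$, and hence is uniquely determined by its value on the single case $X = \bA^1$. There one computes directly: under the HKR identification $\cO(\Map(B\bG_a, \bA^1)) \simeq k[x] \otimes \Sym_k(k \cdot dx[1])$, the action map coming from the translation $B\bG_a \times B\bG_a \to B\bG_a$ reads $x \mapsto x + \eta \cdot dx$ and $dx \mapsto dx$ (higher-order terms vanish for degree reasons and by $\eta^2 = 0$). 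Extracting the coefficient of $\eta$ yields precisely $d_{\mathrm{dR}}$, so $d^{\mathrm{can}} = d_{\mathrm{dR}}$, and by the discreteness afforded by Proposition \ref{S1 action on shift} the two $\BGA$-actions coincide.

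The main obstacle will be executing the explicit computation of the action map in HKR coordinates cleanly in the $\infty$-categorical formalism, since one must carefully track the coalgebra structure on $\cO(B\bG_a)$ together with the degree shifts inherent to the HKR identification. The appeal to naturality and to the universal property of $\Omega^1_\bullet$, however, isolates this difficulty to the single verification on $\bA^1$, with the rest of the argument being formal.
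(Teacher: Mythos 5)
Your proposal is correct and rests on the same pivot as the paper's argument: Proposition \ref{S1 action on shift} makes the space of linear $\BGA$-actions on $\bT_X[\ng1]$ discrete and identifies it with derivations $\cO_X \rightarrow \Omega^1_X$, so everything reduces to matching two derivations. Where the paper stops there --- invoking the universality of $\bT_X[\ng1]$ as the universal degree-$1$ derived vector bundle with linear $\BGA$-action (i.e.\ the universal property of $\Omega^1_X$ as the target of the universal derivation) and citing \cite[Prop.~4.4]{loops and conns} for the identification of the canonical action --- you instead pin down the derivation by naturality plus an explicit computation of the coaction $x \mapsto x \otimes 1 + dx \otimes \eta$ on $\bA^1$. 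That computation is the standard HKR/Connes calculation and is correct; your route has the virtue of being self-contained, at the cost of having to trust the HKR coordinates, which is exactly the difficulty you flag. One small repair: to conclude that a natural $\cO_X$-linear endomorphism of $\Omega^1_\bullet$ is determined by its value on $\bA^1$, naturality with respect to \'{e}tale maps alone does not suffice --- \'{e}tale locality only reduces you to $\bA^n$, and $\Omega^1_{\bA^n}$ is generated by forms pulled back along the non-\'{e}tale coordinate projections $\bA^n \rightarrow \bA^1$. You should invoke naturality of $\Map(B\bG_a, -)$ and of the exponential identification for \emph{all} maps of smooth schemes (which does hold), or else perform the verification directly on $\bA^n$; with that adjustment the argument is complete.
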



\begin{rmk}
A more general statement without restrictions on the amplitude of $\bE_X$ is likely, following the theory of derived foliations in \cite[\textsection 1.2]{foliations 1}.  We only need this weaker statement.
\end{rmk}

\subsubsection{} Our next step is to give a concrete description of the category $\QCoh(\bT_X[\ng1])^{B\bG_a}$.  In the following we will define algebra objects as a strict 1-categorical algebras, but because they will be algebra objects in the heart of the 1-sheared t-structure of Definition \ref{shear tstructure}, by \cite[Prop. A.2.8]{CD} this defines an algebra object in the $\infty$-categorical sense uniquely up to contractible homotopy. 
\begin{defn}\label{omega dga}
Let $X$ be a smooth scheme and recall that by convention, $\Omega^1_X$ has internal weight grading $-1$.    The (coconnective) \emph{mixed de Rham algebra} $\dddot{\Omega}^{\bullet}_X$ is the $\mathbb{Z}$-graded sheaf of graded $\cO_X$-bimodule algebras on $X$ defined by
$$\dddot{\Omega}^{\bullet}_{X} = \Omega^{\bullet}_X\langle \delta\rangle  \big/ \langle \delta^2, [\delta, \omega] - d_{\dR}\omega \mid \omega \in  \Omega^\bullet_X \rangle \in \Alg(\QCoh_{\Delta}(X \times X)^{\gr}) $$
where the cohomological-weight bidegrees are $|\Omega^1_X| = (1, -1) = |\delta| = (1, -1)$.\footnote{Note that though we a priori take the $k$-linear span of the indicated relations, the relations are actually $\cO_{X \times X}$-linear, i.e. $y\delta(fx) - yf\delta(x) = yx\, df$.  Note also that the bracket is supercommutative, i.e. $[\delta, \omega] = \delta \omega - (-1)^{-|\omega|}\omega \delta$.}  There is a map of graded $\cO_X$-bimodule algebra objects $\Omega^{\bullet}_X \rightarrow \dddot{\Omega}^{\bullet}_X$.  We define a connective version $\dddot{\Omega}^{-\bullet}_X$ similarly by taking $|\Omega^1_X| = (-1, -1) = |\delta| = (-1, -1)$.
\end{defn}

\begin{rmk}
A quick calculation shows that $\dddot\Omega^\bullet_X$ is scheme-theoretically supported on the first infinitesimal neighborhood of the diagonal, i.e. affine locally $(1 \otimes f - f \otimes 1) \cdot \delta = df$ and $(1 \otimes f - f \otimes 1) \cdot \omega = 0$, while $\Omega^\bullet_X$ is scheme-theoretically supported on the diagonal itself (thus is an algebra object in $\QCoh(X)$ under the tensor product, i.e. an $\cO_X$-algebra).
\end{rmk}

\subsubsection{} Our starting point for studying $\BGA$-equivariant sheaves on the odd tangent bundle is the following identification \cite[Thm. 4.9]{loops and conns} of the non-renormalized category.
\begin{prop}\label{identify omega}
Let $X$ be a smooth scheme.  There is an equivalence of of $\QCoh(B\bG_m)$-module categories
$$\QCoh(\bT_X[\ng 1])^{B\bG_a^{\gr}} \simeq \Mod_{\QCoh(X)^{\gr}}(\dddot{\Omega}^{-\bullet}_X)$$
such that the restriction functor is compatible with the forgetful functor, i.e. the following diagram commutes:
$$\begin{tikzcd}
\QCoh(\bT_X[\ng 1])^{B\bG_a^{\gr}} \arrow[r, "\simeq"]  \arrow[d] & \Mod_{\QCoh(X)^{\gr}}(\dddot{\Omega}^{-\bullet}_X) \arrow[d, "\res_{\Omega^{-\bullet}_X}^{\dddot\Omega^{-\bullet}_X}"] \\
\QCoh(\bT_X[\ng 1])^{\bG_m} \arrow[r, "\simeq"] & \Mod_{\QCoh(X)^{\gr}}(\Omega^{-\bullet}_X).
\end{tikzcd}$$
Furthermore, for a map $f: X \rightarrow Y$ of smooth schemes the $B\bG_a^{\gr}$-equivariant pullback functor 
$$df^!: \QCoh(\bT_Y[\ng1])^{B\bG_a^{\gr}} \rightarrow \QCoh(\bT_X[\ng1])^{B\bG_a^{\gr}}$$ 
is identified with the functor defined by the $(\dddot\Omega_X^{-\bullet}, \dddot\Omega_Y^{-\bullet})$-bimodule $(\mathrm{id}_X \times f)_* \dddot\Omega^{-\bullet}_X$.
\end{prop}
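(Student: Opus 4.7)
The plan is to apply Barr--Beck--Lurie monadicity to an appropriate equivariant adjunction, then identify the resulting monad with $\dddot\Omega^{-\bullet}_X$ using the explicit description of the $\BGA$-action furnished by Proposition \ref{S1 action on shift}. The starting point is the adjunction $(p^*, p_*)$ associated to the affine projection $p\colon \bT_X[\ng1] \to X$. Since $p$ is $\BGA$-equivariant with trivial action on $X$, it upgrades to an adjunction
\[
p^*\colon \QCoh(X)^{\gr} \rightleftarrows \QCoh(\bT_X[\ng 1])^{B\bG_a^{\gr}} \colon p_*
\]
whose right adjoint is conservative (since $p$ is affine and the forgetful functor from $\BGA$-equivariant sheaves is conservative) and continuous. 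By Barr--Beck--Lurie, the right-hand category is identified with modules over the monad $p_* p^*$, regarded as a $\QCoh(X)^{\gr}$-bimodule algebra.

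Next I would identify the monad. Forgetting $\BGA$-equivariance, the monad reduces to $\Omega^{-\bullet}_X \otimes_{\cO_X} -$ (this is the non-equivariant case, which enters through the bottom square of the diagram in the statement), so the $\BGA$-equivariant monad is a crossed product of $\Omega^{-\bullet}_X$ by the $\BGA$-coaction. By Proposition \ref{S1 action on shift}, the $\BGA$-action on $\bT_X[\ng1]$ is encoded by the de Rham derivation $d_{\dR}\colon \cO_X \to \Omega^1_X$, so the coaction $\Omega^{-\bullet}_X \to \Omega^{-\bullet}_X \otimes_k k[\eta]$ takes the explicit form $\omega \mapsto \omega + d_{\dR}\omega \cdot \eta$, noting that $\eta^2 = 0$ on parity grounds in the 1-sheared heart. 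Unwinding what a module structure over the crossed product means, a $\BGA$-equivariant $\Omega^{-\bullet}_X$-module $M$ is the data of an operator $\delta$ (pairing against $\eta^\vee$) on $M$ satisfying $\delta^2 = 0$ (from $\eta^2 = 0$ and coassociativity) and the Leibniz-type compatibility $[\delta, \omega] = d_{\dR}\omega$ (from compatibility with the $\Omega^{-\bullet}_X$-action). These are exactly the relations defining $\dddot\Omega^{-\bullet}_X$. Because the algebra and its coaction sit in the heart of the 1-sheared t-structure of Definition \ref{shear tstructure}, \cite[App.~A]{CD} upgrades this 1-categorical identification to an $\infty$-categorical equivalence uniquely up to contractible homotopy.

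Compatibility with the non-equivariant identification in the bottom square is automatic: both squares are described by monadic adjunctions over $\QCoh(X)^{\gr}$, and the vertical forgetful functor on the right is restriction along the inclusion $\Omega^{-\bullet}_X \hookrightarrow \dddot\Omega^{-\bullet}_X$, which is precisely the map forgetting the $\delta$-action. For functoriality, given a smooth $f\colon X \to Y$ the map $df\colon \bT_X[\ng1] \to \bT_Y[\ng1]$ is $\BGA$-equivariant, so $df^!$ preserves equivariance; naturality of the monadic identification turns it into the bimodule functor associated with the canonical pullback map $f^*\dddot\Omega^{-\bullet}_Y \to \dddot\Omega^{-\bullet}_X$, which standard manipulations present as $(\id_X \times f)_*\dddot\Omega^{-\bullet}_X$. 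The main obstacle is the identification of the monad: one must rule out higher $A_\infty$-corrections to the simple relations $\delta^2 = 0$ and $[\delta, \omega] = d_{\dR}\omega$, and this is precisely the role of Proposition \ref{S1 action on shift}, which shows that the space of linear $\BGA$-actions on a degree-$1$ derived vector bundle is discrete, so all coherences are controlled by the single derivation $d_{\dR}$.
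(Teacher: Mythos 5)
Your proposal is essentially the paper's argument: the paper defers the main claim to \cite[Thm.\ 4.9]{loops and conns}, whose proof is exactly the Barr--Beck--Lurie monadicity you describe, and it invokes the same formality mechanism you do (the monad lives in the heart of the 1-sheared t-structure of Definition \ref{shear tstructure}, so by \cite[App.\ A]{CD} its $E_1$-structure is determined by 1-categorical data, ruling out higher $A_\infty$-corrections). One small imprecision to tighten: the conservative continuous right adjoint out of $\QCoh(\bT_X[\ng1])^{\BGA}$ landing in $\QCoh(X)^{\gr}$ is not the equivariant $p_*$ alone (which lands in $\QCoh(X)^{B\bG_a^{\gr}}$) but its composite with the $B\bG_a$-forgetful functor, and it is the induction along $B\bG_a$ hidden in the left adjoint that produces the extra generator $\delta$ in the crossed product --- your monad computation already reflects this correctly, so only the stated adjunction needs adjusting. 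For the pullback statement the paper argues slightly differently, characterizing $df^!$ by t-exactness, $\delta$-linearity and compatibility with forgetful functors rather than by ``naturality of the monadic identification,'' but both routes lead to the same bimodule $(\mathrm{id}_X \times f)_*\dddot\Omega^{-\bullet}_X$.
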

\begin{proof}
The main claim is by the argument in \cite[Thm. 4.9]{loops and conns}, which only treats the affine case, but may be directly adapted to our formulation.  The formality argument using weights in \emph{loc. cit.} may be reformulated as the claim that the algebra $\dddot{\Omega}^{-\bullet}_X$ underlying the monad lives in the heart of the 1-sheared t-structure (see Definition \ref{shear tstructure} and the proof of Proposition \ref{S1 action on shift}), and thus its structure as an algebra object is determined by 1-categorical data.  Compatibility with forgetful functors is by construction.  The claim regarding pullbacks follows by Proposition \ref{bimod prop}; that is, evaluating at $\dddot\Omega_Y$ we see that the bimodule computing the pullback must be in the heart of the 1-sheared t-structure, and we may compute that it is the $(\dddot\Omega_X^{-\bullet}, \dddot\Omega_Y^{-\bullet})$-bimodule $(\mathrm{id}_X \times f)_* \dddot\Omega^{-\bullet}_X$ Zariski locally on $Y$ and using that $df^! \simeq df^*$.
\end{proof}

\begin{rmk} There is a third sheaf of algebras and corresponding category from \cite{BG, rybakov} that one can consider, which corresponds to the category $\Coh(\bT_X[\ng1])^{\grTate}$.  The \emph{de Rham algebra} is the dg sheaf of dg-commutative $k$-algebras $\Omega^\bullet_{X, d} = (\Omega^\bullet_X, d)$ underlying the de Rham complex.  While the sheaves themselves are $\cO_X$-quasicoherent, the differentials are only $k$-linear, i.e. $\Omega^\bullet_{X, d}$ is not an object of $\QCoh(X \times X)$.  In \emph{loc. cit.} it is understood as an algebra object in the category of $k$-linear (non-quasicoherent) sheaves on $X$, and its category of modules consists of complexes with quasi-coherent terms whose differentials are not required to be $\cO_X$-linear (and may fail to have quasi-coherent cohomology).   
We will not consider this category explicitly, instead appealing to its Koszul dual realization.
\end{rmk}

\begin{rmk}
When $X$ is not smooth, one may interpret $\Omega^{-\bullet}_X$ as a mixed complex via the explicit dg model in the category of \emph{quasi-coherent crystals over a derived foliation} \cite[Def. 2.1.2]{foliations 2}.
\end{rmk}

\subsection{Filtered \texorpdfstring{$\cD$}{D}-modules}\label{sec dmod}

We now turn our attention to filtered $\cD$-modules on the Koszul dual side, which we realize explicitly via the Rees construction.  The following notions are well-known; detailed discussion may be found in \cite{MHM}.  We reproduce the main definitions and constructions for the reader's convenience, and to establish our conventions.  We let $t$ denote the Rees parameter with weight 1.
\begin{defn}\label{filt dmod schemes}
Let $X$ be a smooth scheme.  The sheaf of differential operators $\cD_X$ is an algebra object of $\QCoh(X \times X)$ supported along the diagonal,\footnote{But unlike $\dddot\Omega^\bullet_X$, not any finite infinitesimal neighborhood.} and its Rees algebra for the order filtration is likewise an algebra object of $\QCoh(X \times X)^{\fil}$ supported along the diagonal, i.e. a $k[t]$-linear $\cO_X$-bimodule algebra
$$\wt{\cD}_X := \Rees(\cD_X) = \bigoplus_{k \geq 0} \cD_X^{\leq k} t^k.$$ 
The category of \emph{filtered right $\cD$-modules} 
$$F{\cD}^r(X) = \Mod_{\QCoh(X)^{\fil}}^r(\wt{\cD}_X)$$
is the category of right $\wt{\cD}_X$-module objects in the $\QCoh(X \times X)^{\fil}$-module category $\QCoh(X)^\fil$.  We may define the category of left modules  $F\cD^\ell(X)$ similarly.  
\end{defn}

\subsubsection{} The map of $\cO_X$-bimodule algebras $\wt{\cO}_X \rihgtarrow \wt{\cD}_X$ gives rise to t-exact and adjoint \emph{induction} and \emph{restriction functors}
$$\ind: \begin{tikzcd} \QCoh(X)^{\fil} \arrow[r, shift left] & \arrow[l, shift left]  F\cD(X)\end{tikzcd} :\res.$$
By definition, the category $F\cD(X)$ is a $\QCoh(\bA^1/\bG_m)$-module category, where $\bA^1 = \Spec k[t]$.  We have identifications of the insertion functors for the generic and special fibers:
$$\un: F\cD(X) \longrightarrow F{\cD}(X) \otimes_{\QCoh(\bA^1/\bG_m)} \QCoh(\bG_m/\bG_m) \simeq \cD(X),$$
$$\gr: F\cD(X) \longrightarrow  F{\cD}(X) \otimes_{\QCoh(\bA^1/\bG_m)} \QCoh(\{0\}/\bG_m) \simeq \QCoh(\bT^*_X/\bG_m)$$
compatible with the induction and forgetful functors.  This allows us to define the following small subcategory.

\begin{defn}
We define the \emph{coherent subcategory} $F\cD_c(X) \subset F\cD(X)$ to be the smallest stable idempotent-completed subcategory containing the image of $\Coh(X)^{\fil} = \Perf(X)^{\fil} \subset \QCoh(X)^\fil$ under the induction functor, which compactly generates $F\cD(X)$ by the argument in \cite[\textsection 5.1.17]{QCA}.
\end{defn}

\begin{rmk}
A filtered $\cD$-module $\cM \in \wt{\cD}(X)$ is coherent if and only if $\un(\cM) \in \cD(X)$ and $\gr(\cM) \in \QCoh(\bT^*_X)$ are coherent, if and only if $\gr(\cM)$ is coherent and its weights are bounded below.  In particular, applying the Rees construction to a $\cD_X$-module equipped with a compatible filtration gives rise to a coherent $\wt{\cD}_X$-module if and only if the filtration was good.
\end{rmk}

\subsubsection{}  The structure and canonical sheaves are naturally equipped with the structure of a filtered left (resp. right) $\cD$-module via
$$\wt{\cO}_X := \cO_X \otimes k[t] \in F{\cD}^\ell(X), \;\;\;\;\;\;\;\;\;\; \wt{\omega}_X := \Omega^{\dim X}_X \otimes k[t]t^{-\dim X}[-\dim X] \in F{\cD}^r(X).$$
These objects and their iterated convolutions with $\wt{\cD}_X$ are in shifts of the hearts of standard t-structures, thus their 1-categorical definition lifts to an $\infty$-categorical one up to contractible homotopy.  The \emph{standard t-structure} on $F\cD^\ell(X)$ is uniquely characterized by the property that the forgetful functor to $\QCoh(X)^\fil$ is left t-exact.  We define a \emph{standard t-structure} on $F\cD^r(X)$ by shifting the standard t-structure on $\QCoh(X)^\fil$ so that $\wt{\omega}_X$ is in the heart.  There are side-changing equivalences, which are t-exact for these t-structures:
$$\begin{tikzcd}[column sep=25ex]
F{\cD}^\ell(X) \arrow[r, "\simeq"', "{\wt{\omega}_X \otimes_{\wt{\cO}_X} -}", shift left=1ex] & F{\cD}^r(X). \arrow[l, shift left=1ex, "{\intHom_{\wt{\cO}_X}(\wt{\omega}_X, -)}"]
\end{tikzcd}$$
To formulate this in the $\infty$-categorical setting, we note the sheaf 
$$(\wt{\omega}_X \boxtimes \wt{\cO}_X) \otimes_{\wt{\cO}_{X \times X}}  \wt\cD_X \in \QCoh(X \times X)^\fil$$ 
has two commuting right $\wt{\cD}_X$-module structures defined by the usual 1-categorical formulas, which give rise to $\infty$-categorical structures since these are module objects in the heart.  Likewise, the sheaf
$$\wt\cD_X \otimes_{\wt\cO_{X \times X}} (\wt\cO_X \boxtimes \wt\omega_X^{-1}) \in \QCoh(X \times X)^\fil$$ 
has two commuting left $\wt\cD_X$-module structures.

\subsubsection{} We define the pullback functor on $\cD$-modules in the usual way via a bimodule object.  This bimodule object lives in the heart, so by the usual methods we can define it using 1-categorical formulas; however, it is only t-exact for smooth morphisms.
\begin{defn}
Let $f: X \rightarrow Y$ be map of smooth schemes.  We define the (naive) pullback functor $f^\dagger: F\cD^\ell(Y) \rightarrow F\cD^\ell(X)$ by the $(\wt{\cD}_X, \wt{\cD}_Y)$-bimodule object in $\QCoh(X \times Y)^{\fil}$ given by
$$(f \times \mathrm{id}_Y)^*\wt{\cD}_Y \in \QCoh(X \times Y)^\fil$$
where we view $\wt{\cD}_Y \in \QCoh(Y \times Y)^\fil$.  This is a module object in the heart for iterated convolutions with both $\wt\cD_X$ and $\wt\cD_Y$, thus its bimodule structure may be defined 1-categorically.  The right $\wt\cD_Y$-module structure is tautological, and the left $\wt\cD_X$-module structure is defined by the usual formulas \cite[\textsection 8.6]{MHM}, i.e.
$$(f \times \mathrm{id}_Y)^*\wt{\cD}_Y \simeq \wt{\cO}_X \otimes_{f^{-1}\wt{\cO}_Y} f^{-1}\wt{\cD}_Y, \;\;\;\;\;\;\;\;\;\;\; \theta \cdot (x \otimes D) = \theta(x) \otimes D + x \otimes (f_*\theta \, D) \text{ for } \theta\in \wt{\Theta}_X.$$
The functor $f^\dagger$ is defined on right $\cD$-modules by side-changing.  We define
$$f^! := f^\dagger[\dim X - \dim Y].$$
When $f$ is smooth, the functor $f^!$ is t-exact when viewed as a functor on right $\cD$-modules by side-changing.  It is straightforward to verify that the pullback restricts to coherent complexes and commutes with the restriction functor.
\end{defn}

\subsubsection{} We now establish a basic functoriality and descent properties for the category of filtered $\cD$-modules.  We begin with the following elementary identification of $F\cD(X)$ as a derived category.
\begin{prop}\label{dmod heart}
Let $X$ be a smooth scheme.  The canonical map $\begin{tikzcd} D(F\cD(X)^\heartsuit) \arrow[r, "\simeq"] &  F\cD(X)\end{tikzcd}$ is an equivalence.
\end{prop}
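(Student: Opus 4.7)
The strategy is to reduce to Proposition \ref{classical is derived qcoh} via Barr--Beck--Lurie monadicity over the base $\QCoh(X)^\fil$. First, since $X \times \bA^1/\bG_m$ is a smooth classical perfect stack, Proposition \ref{classical is derived qcoh} applies to give an equivalence $\QCoh(X)^\fil \simeq D(\QCoh(X)^{\fil,\heartsuit})$ under which the standard t-structure is the standard one.

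Next, the adjunction $\ind : \QCoh(X)^\fil \rightleftarrows F\cD(X) : \res$ has a conservative, continuous, t-exact right adjoint (conservativity and continuity hold since modules for an algebra are computed over the base; t-exactness is by the very definition of the standard t-structure on $F\cD(X)$). Barr--Beck--Lurie then identifies $F\cD(X) \simeq \Mod_{\QCoh(X)^\fil}(T)$ for the monad $T = \wt\cD_X \otimes_{\wt\cO_X} -$. By construction $\wt\cD_X$ lies in the heart of $\QCoh_\Delta(X\times X)^\fil$ (it is tautologically a filtered algebra in classical sheaves), and $\wt\cD_X$ is flat over $\wt\cO_X$ because the order filtration on $\cD_X$ has locally free associated graded $\gr \cD_X = \Symp_X \cT_X$ (using smoothness of $X$). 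In particular $T$ restricts to a monad $T^\heartsuit$ on $\QCoh(X)^{\fil,\heartsuit}$, and the analogous Barr--Beck statement at the 1-categorical level gives $F\cD(X)^\heartsuit \simeq \Mod_{\QCoh(X)^{\fil,\heartsuit}}(T^\heartsuit)$ as Grothendieck abelian categories.

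Finally I would invoke the following general fact: if $\cC$ is a presentable stable $\infty$-category with $\cC \simeq D(\cC^\heartsuit)$ and $A$ is a flat algebra object concentrated in the heart, then the natural functor $D(\Mod_{\cC^\heartsuit}(A)) \to \Mod_\cC(A)$ is an equivalence, since the derived and underived tensor products with $A$ agree on discrete modules and Barr--Beck-Lurie applies compatibly at both levels. Combining the above yields
$$D(F\cD(X)^\heartsuit) \simeq \Mod_{D(\QCoh(X)^{\fil,\heartsuit})}(T^\heartsuit) \simeq \Mod_{\QCoh(X)^\fil}(T) \simeq F\cD(X),$$
and this equivalence is readily seen to be the canonical one.

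The main obstacle is justifying the last general fact cleanly; one route is to apply the monadic formalism of \cite[\textsection 4]{HA} and explicitly compare the monads at the derived and abelian levels, using flatness to ensure the derived monad preserves connective/heart objects in the same way its underived counterpart does. The remaining steps are direct applications of Barr--Beck--Lurie and of Proposition \ref{classical is derived qcoh}, with the smoothness of $X$ entering only through the flatness of $\wt\cD_X$ over $\wt\cO_X$.
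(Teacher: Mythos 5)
Your route is genuinely different from the paper's. The paper's proof works entirely with t-structures: it observes that $F\cD(X)$ is left t-complete (because $\QCoh(X)^{\fil}$ is, and $\res$ is conservative, left t-exact and limit-preserving), invokes \cite[I.3 Lem. 2.4.5]{GR} to get the equivalence on bounded-below subcategories $D(F\cD(X)^\heartsuit)^+ \risom F\cD(X)^+$, and then extends to unbounded objects by showing $D(F\cD(X)^\heartsuit)$ is itself left t-complete --- a Zariski-local assertion that on affines follows from the finite global dimension of $F\cD(X)^\heartsuit$ (via the filtered Spencer complex). Your proposal instead runs Barr--Beck--Lurie over $\QCoh(X)^\fil$ on both sides and compares monads, using flatness of $\wt{\cD}_X$ over $\wt{\cO}_X$. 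The preliminary steps are fine (the flatness claim is correct: each $\cD_X^{\leq k}$ is locally free since $\gr \cD_X \simeq \Symp_X \cT_X$, and flatness over $\cO_X[t]$ follows from the local criterion), and if completed your argument would buy a statement that is more modular and does not mention global dimension.

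The soft spot is that the ``general fact'' you invoke at the end is not an off-the-shelf result --- it is essentially the proposition itself in mild disguise, and all of the content lives there. To close it you must check two things. First, that the forgetful functor $D(\Mod_{\cC^\heartsuit}(A)) \rightarrow D(\cC^\heartsuit)$ between \emph{unbounded} derived categories of Grothendieck abelian categories is conservative and colimit-preserving, so that monadicity applies on the abelian-derived side (on $\Mod_{\cC}(A)$ it holds by construction); this is exactly the step where unbounded complexes could misbehave, and it is the step the paper's left-t-completeness/finite-global-dimension argument is designed to control. Second, that the two monads agree as monads and not merely as endofunctors; here the relevant device is that $A = \wt{\cD}_X$ and its convolution powers lie in (a shift of) the heart, so the $\infty$-categorical algebra structure is determined by $1$-categorical data --- the same \cite[Prop. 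A.2.8]{CD} argument the paper uses in the proof of Theorem \ref{kd schemes}. Both verifications are doable, so your proof can be completed as written; but if you want the cheapest repair, use your monadic comparison only for the bounded-below equivalence and then import the paper's left t-completeness argument (Zariski localization plus finite global dimension of $F\cD(X)^\heartsuit$ on affines) to pass to unbounded objects.
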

\begin{proof}
The t-structure on $F\cD(X)$ is left t-complete, since the t-structure on $\QCoh(X)^{\gr}$ is t-complete and the forgetful functor is conservative, left t-exact, and commutes with limits.  By \cite[I.3 Lem. 2.4.5]{GR} we have an equivalence $D(F\cD(X)^\heartsuit)^+ \rightarrow F\cD(X)^+$, so it suffices to show that $D(F\cD(X)^\heartsuit)$ is left t-complete.  This assertion is Zariski local on $X$, so we may assume that $X$ is affine, where the claim follows since $F\cD(X)^\heartsuit$ has finite global dimension when $X$ is affine (e.g. using the Spencer complex \cite[Def. 8.4.3]{MHM}).
\end{proof}

Next, we define the family of categories $F\cD(X)$ with the $!$-pullback functorially in an $\infty$-categorical sense.
\begin{prop}\label{dmod pullback prop}
The assignment $X \mapsto F\cD(X)$ and $f \mapsto f^!$ gives rise to a $\QCoh(\bA^1/\bG_m)$-linear $\infty$-functor
$$F\cD: \cat{smSch}_k^{\opp} \rightarrow \cat{Pr}^L_k$$
and the restriction functor give rise to a map of functors $\res: F\cD \rightarrow \QCoh^{\fil}$.
\end{prop}
\begin{proof}
Let $f: W \rightarrow X$ and $g: X \rightarrow Y$ be maps of smooth schemes.  We need to produce an equivalence of functors $f^! g^! \rightarrow (g \circ f)^!$.  These functors are given by $(\wt\cD_W, \wt{\cD}_Y)$-bimodule objects, so we may instead produce an equivalence
$$(g f \times \mathrm{id}_Y)^*\wt\cD_Y \longrightarrow (f \times \mathrm{id}_X)^*\wt\cD_X  \otimes_{\wt{\cD}_X} (g \times \mathrm{id}_Y)^*\wt\cD_Y.$$
The map is given first by the identification $(g f \times \mathrm{id}_Y)^*\wt\cD_Y = (f \times \mathrm{id}_X)^*\wt\cO_X  \otimes_{\wt\cO_X} (g \times \mathrm{id}_Y)^*\wt\cD_Y$, then by the map to $(f \times \mathrm{id}_X)^*\wt\cD_X  \otimes_{\wt\cO_X} (g \times \mathrm{id}_Y)^*\wt\cD_Y$ induced by the inclusion $\wt\cO_X \hookrightarrow \wt\cD_X$, and then by the insertion map $ (f \times \mathrm{id}_X)^*\wt\cD_X  \otimes_{\wt\cO_X} (g \times \mathrm{id}_Y)^*\wt\cD_Y \rightarrow (f \times \mathrm{id}_X)^*\wt\cD_X  \otimes_{\wt{\cD}_X} (g \times \mathrm{id}_Y)^*\wt\cD_Y.$  It is a standard verification that this map is an equivalence.  To lift the functor to an $\infty$-functor, i.e. that composition $f_1^! f_2^! \cdots f_r^!$ is independent of grouping in a homotopy coherent way, it suffices to check the case where $r=3$, since the bimodule objects live in the heart the standard t-structure (see the proof of \cite[Prop. A.2.7]{CD}), which is a standard direct verification.   Compatibility with restriction follows by a similar 1-categorical verification in the heart of the t-structure.  To see that these functors are $\QCoh(\bA^1/\bG_m)$-linear, we may restrict the $\QCoh(\bA^1/\bG_m)$-action to the full subcategory of locally free sheaves on $\bA^1/\bG_m$ by \cite[Prop. A.1.13]{CD}, i.e. check linearity on $\Hom$s, which is by construction.
\end{proof}

\subsubsection{} We wish to now define the pushforward functor for filtered $\cD$-modules and establish a base-change result.  We have thus far labored to keep definitions as concrete as possible, but at this juncture we find it necessary to switch gears and make definitions in terms of the \emph{Hodge stack}.  We will show that the category of ind-coherent sheaves on the Hodge stack are equivalent to filtered $\cD$-modules as we have defined them; then, the results we need for pushforward and base-change will automatically follow from results in \cite{GR}.

\medskip

We briefly recall the main results of \cite{GR}.  Let $Y$ be a formal moduli problem under $X$, i.e. a nil-isomorphism $X \rightarrow Y$ where $Y$ is locally almost-finite-type and admits deformation theory \cite[\textsection II 5.1.3]{GR}.  In all our examples, $X$ will be locally almost-finite-type and admitting deformation theory with a map to $\pt = \Spec k$, and we take $X \rightarrow X^{\dR} := \wh{X}_{\pt}$ to be the map to the de Rham stack.  To such a set-up, one can define a \emph{formal deformation to the normal cone} construction \cite[\textsection II 9.2]{GR}, which is a prestack $\wh{\bN}_{X/Y}$ over $\bA^1/\bG_m$ whose generic fiber is $Y$, and whose special fiber is a formal moduli problem under $X$ realized as a colimit of split square-zero extensions by $\Symp^n \cT_{X/Y}$ \cite[\textsection II 7.1.4]{GR}.  Furthermore, $\bN_{X/Y}$ is a formal moduli problem under $X \times \bA^1/\bG_m$.  Sometimes, we write for any map of prestacks $f: X \rightarrow Y$ such that $Y$ is locally almost-finite-type and admits deformation theory $\wh{\bN}_{X/Y} := \wh{\bN}_{X/\wh{X}_Y}$.

\medskip

We define the Hodge stack, and filtered $\cD$-modules as follows.  This definition, and many subsequent notions, can be done more generally; we develop the bare minimum needed for our application.
\begin{defn}
Let $X$ be a scheme.  We define the \emph{Hodge stack}
$$X^{\Hod} = \wh{\bN}_{X/X^{\dR}}.$$
It is a prestack over $\bA^1/\bG_m$ with generic fiber $X^{\dR}$, and a formal moduli problem under $X \times \bA^1/\bG_m$ with structure map $i: X \times \bA^1/\bG_m \rightarrow X^{\Hod}$.  We call its special fiber the \emph{Dolbeault stack} $X^{\Dol}$, a stack over $B\bG_m$.  This association is functorial, i.e. if $f: X \rightarrow Y$ is a morphism of schemes, we have a morphism $f: X^{\Hod} \rightarrow Y^{\Hod}$ over $\bA^1/\bG_m$.

We define the category of \emph{right filtered $\cD$-modules} to be the category $\IndCoh(X^{\Hod})$ of ind-coherent sheaves on the Hodge stack, and the category of \emph{left filtered $\cD$-modules} to be the category $\QCoh(X^{\Hod})$ on the Hodge stack.  We have compatible \emph{forgetful functors} $i^!, i^*$ and \emph{side-changing functors} $\Upsilon$:
$$\begin{tikzcd}
\QCoh(X^{\Hod}) \arrow[r, "i^*"] \arrow[d, "\Upsilon", "\simeq"']  & \QCoh(X \times \bA^1/\bG_m) \arrow[d, "\Upsilon", "\simeq"']\\
\IndCoh(X^{\Hod}) \arrow[r, "i^!"] & \IndCoh(X \times \bA^1/\bG_m).
\end{tikzcd}$$
The functor $\Upsilon$ is the tautological functor which arises from the $\QCoh(Y)$-action on $\omega_Y \in \IndCoh(Y)$ for any prestack $Y$; in particular, it sends $\Upsilon(\cO_Y) = \omega_Y$.  The functor $i^!: \IndCoh(X^{\Hod}) \rightarrow \IndCoh(X)$ admits a left adjoint, $i_*$, the \emph{induction functor}.  The corresponding functor on left $\cD$-modules is given by $\Upsilon^{-1} \circ i_* \circ \Upsilon.$

Given a map $f: X \rightarrow Y$, we tautologically have \emph{pullback functors}
$$\begin{tikzcd}
\QCoh(X^{\Hod}) \arrow[r, "i_X^*"] & \QCoh(X) & \IndCoh(X^{\Hod}) \arrow[r, "i_X^!"] & \IndCoh(X)  \\
\QCoh(Y^{\Hod})\arrow[u, "f^*"]  \arrow[r, "i_Y^*"] & \QCoh(Y) \arrow[u, "f^*"] & \IndCoh(Y^{\Hod})\arrow[u, "f^!"]  \arrow[r, "i_Y^!"] & \QCoh(Y) \arrow[u, "f^!"] 
\end{tikzcd}$$  
compatible with side-changing.  We also have a \emph{pushforward functor} on right $\cD$-modules compatibly with induction
$$\begin{tikzcd}
\IndCoh(X) \arrow[r, "i_{X*}"] \arrow[d, "f_*"] & \IndCoh(X^{\Hod}) \arrow[d, "f_*"] \\
\IndCoh(Y) \arrow[r, "i_{Y*}"] & \IndCoh(Y^{\Hod})
\end{tikzcd}$$
which is defined in \cite[\textsection II 3.4.3]{GR} for $f$ any map between inf-schemes.  For left $\cD$-modules, we define $f_* = \Upsilon^{-1}_Y \circ f_* \circ \Upsilon_X$ by side-changing.
\end{defn}

We have the following, which is a combination of results from \cite[\textsection II 3.4.4]{GR}; for the notion of right adjointability see \cite[Def. 3.4.5]{german}.
\begin{prop}\label{dmod base change}
Suppose that $f: X \rightarrow Y$ is proper.  Then the functors 
$$\begin{tikzcd} \IndCoh(X^{\Hod}) \arrow[r, "f_*", shift left] & \arrow[l, "f^!", shift left]  \IndCoh(Y^{\Hod})\end{tikzcd}$$ 
are adjoint, and the square
$$\begin{tikzcd}
X' \arrow[r, "g'"]  \arrow["f'"', d] & \arrow[d, "f"] X \\
Y' \arrow[r,  "g"] & Y
\end{tikzcd}$$
satisfies the left Beck-Chevalley condition, i.e. is right adjointable, i.e. the natural map
$$f'_*g'^! \rightarrow f'_*g'^!f^!f_* \simeq f'_*f'^!g^!f_* \rightarrow g^!f_*$$
is an equivalence. 
\end{prop}
\begin{proof}
If $f: X \rightarrow Y$ is proper, then $f: X^{\Hod} \rightarrow Y^{\Hod}$ is proper.  The adjunction follows by \cite[II Cor. 5.2.3]{GR}, and base change by \cite[II Prop. 2.3.2]{GR}.
\end{proof}

Finally, we wish to identify the category of ind-coherent sheaves on $X^{\Hod}$ with filtered $\cD$-modules defined in the ``classical'' sense.  In particular, this means that we have well-defined pushforward functors and base change.
\begin{prop}\label{hodge id}
Let $X$ be a smooth scheme.  Under the equivalence by integral transforms
$$\cat{Fun}^L_{\QCoh(\bA^1/\bG_m)}(\QCoh(X), \QCoh(X)) \simeq \QCoh(X \times X)^{\fil}$$
the monad $i^!i_*$ is identified with the algebra object $\wt{\cD}_X$.  For $f: X \rightarrow Y$ a map of smooth schemes, the functor $f^!$ is identified with the $(\wt{\cD}_Y, \wt\cD_X)$-bimodule object $(f \times \mathrm{id}_Y)^! \wt{\cD}_Y  \in \QCoh(X \times Y)^{\fil}$.  In particular, there is a natural equivalence of functors
$$F\cD^r(-) \simeq \IndCoh((-)^{\Hod}): \cat{smSch}_k^{\opp} \rightarrow \cat{Pr}^L_k.$$
\end{prop}
\begin{proof}
We first pass to the formal groupoid attached to the formal moduli problem $i: X \times \bA^1/\bG_m \rightarrow X^{\Hod}$, i.e. we take the Cech nerve of $i$; in fact this is how the deformation to the normal cone is defined in the first place \cite[\textsection II 9.2.4]{GR}.  By construction the deformation to the normal cone comumutes with fiber products; thus
$$\mathrm{Cech}(\wh{\bN}_{X/X} \rightarrow \wh{\bN}_{X/X^{\dR}}) = \left( \begin{tikzcd} \cdots \arrow[r] \arrow[r, shift left] \arrow[r, shift right] & \wh{\bN}_{X/\wh{X \times X \times X}_{\Delta}} \arrow[r, shift left] \arrow[r, shift right] & \wh{\bN}_{X/\wh{X \times X}_\Delta} \arrow[r] & \wh{\bN}_{X/X} \end{tikzcd} \right).$$
On the other hand, for a regular closed embedding $Z \subset Y$, we have that $\wh{\bN}_{Z/Y}$ is a formal completion of the classical deformation to the normal cone, i.e. the Rees construction applied to the (decreasing, so negatively weight-graded) $\cI_Z$-adic filtration of $\cO_Y$.  In our setting, the functor $i^!$ is monadic since it is a nil-isomorphism, and the monad is identified with the integral kernel $\omega_{\wh{\bN}_\Delta}$.

We write $k^{\fil} = \bA^1/\bG_m$ the filtered point, and $Y^{\fil} = Y \times k^{\fil}$ for any stack $Y$; we have $\omega_{Y^\fil/k^{\fil}} = \omega_Y \otimes k[t]$ (with no weight shift).  Next, we show that there is a natural equivalence
$$\omega_{\wh{\bN}_\Delta/k^{\fil}} \simeq \wt{\cD}_X \otimes_{\cO_X} \omega_X.$$
The dualizing sheaf $\omega_{(X \times X)^{\fil}/k^{\fil}}$ is concentrated in degree $-2n$, so one can check locally that its $!$-restriction to the formal neighborhood, i.e. local cohomology along a regular embedding of codimension $n$, is in degree $-n$.  The iterated convolutions are also in degree $-n$, i.e.
$$(\wt{\cD}_X \otimes_{\cO_X} \omega_X) \otimes^!_{\wt\cO_X} (\wt{\cD}_X \otimes_{\cO_X} \omega_X) \simeq \wt\cD_X \otimes_{\cO_X} \wt{\cD}_X \otimes_{\cO_X} \omega_X$$
so we may understand the algebra structure 1-categorically.  By the cohomological calculation, we can write the dualizing sheaf as a colimit of dualizing sheaves over infinitesimal neighborhoods:
$$\omega_{\wh{\bN}_\Delta/k^\fil} = \colim H^{-n}(\omega_{\wh{\bN}_\Delta^{(n)}/k^\fil})[n], \;\;\;\;\;\;\;\;\;\; \wh{\bN}_\Delta^{(n)} := \Spec_{X \times X} \bigoplus_{-n \leq k \in \bZ^{\leq 0}} \cI_{\Delta}^k t^{-k}.$$
We choose a projection $\pi: \wh{\bN}_\Delta^{(n)} \rightarrow X^{\fil}$ and view $\wh{\bN}_\Delta^{(n)}$ as a finite flat (thus affine) scheme over $X^{\fil}$, so that 
$$H^{-n}(\omega_{\wh{\bN}_\Delta}^{(n)})[n] \simeq H^{-n}(\omega_{X^{\fil}/k^{\fil}} \otimes_{\wt\cO_X} \omega_{\wh{\bN}_\Delta^{(n)}/X^\fil})[n] \simeq \omega_X \otimes_{\cO_X} \intHom_{\wt\cO_X}(\cO_{\wh{\bN}_\Delta^{(n)}}, \wt\cO_X).$$
It is then a standard calculation \cite{lurie crystals} that $\intHom_{\wt\cO_X}(\cO_{\wh{\bN}_\Delta^{(n)}}, \wt\cO_X) \simeq \wt{\cD}^{\leq n}_X$ by a pairing $\wt{\cD}^{\leq n}_X \otimes_{\wt\cO_X} \cO_{\wh{\bN}_\Delta^{(n)}} \rightarrow \wt\cO_X$, and since all objects are in the heart the algebra structure is a straightforward 1-categorical verification by computing the dual to pullback maps on structure sheaves.

For compatibility with pullback, using Proposition \ref{bimod prop} it suffices to identify the corresponding $(\wt{\cD}_Y, \wt{\cD}_X)$-bimodule object in $\QCoh(X \times Y)^{\fil}$.  Applying the functor to $\wt{\cD}_Y$ we see that the bimodule must be in the heart of the standard t-structure.  We may then verify Zariski locally on $Y$, where the bimodule is determined by its value on the generator $\wt{\cD}_Y$.
\end{proof}

\subsubsection{} 

Our aim is now to establish smooth descent for filtered $\cD$-modules; our strategy will be to deduce it from the usual descent for $\cD$-modules and descent for sheaves on the cotangent bundle.  The following is standard; see Proposition 3.5.1 in \cite{ho li}.
\begin{prop}\label{modcat hom}
Let $\cat{A}$ be a compactly generated symmetric monoidal $\infty$-category, and $\cat{M}$ a module category.  Then, the $\Hom$-spaces of $\cat{M}$ are naturally enriched as objects of $\cat{A}$.  Furthermore, if $\Phi: \cat{A} \rightarrow \cat{B}$ be a monoidal functor of rigid compactly-generated symmetric monoidal $\infty$-categories, then 
$$\Hom_{\cat{M} \otimes_{\cat{A}} \cat{B}}(X, Y)) = \Phi(\uHom_{\cat{M}}(X, Y)).$$
\end{prop}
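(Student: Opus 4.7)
\textbf{Proof plan for Proposition \ref{modcat hom}.}

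For the first part, the plan is to construct the enrichment via the adjoint functor theorem. Since $\cat{A}$ is presentable and $\cat{M}$ is a $\cat{A}$-module category in $\cat{Pr}^L$, for each $X\in\cat{M}$ the action functor $(-)\otimes X\colon \cat{A}\to \cat{M}$ preserves all small colimits. By the $\infty$-adjoint functor theorem \cite[\S 5.5]{topos}, it admits a right adjoint which we denote $\uHom_{\cat{M}}(X,-)\colon \cat{M}\to \cat{A}$. Functoriality in the first argument then follows from the usual manipulations (the tensor/Hom adjunction). This gives the natural enrichment of the $\Hom$-spaces as objects of $\cat{A}$, characterized by the universal property
$$\Map_{\cat{A}}(a,\uHom_{\cat{M}}(X,Y))\simeq \Map_{\cat{M}}(a\otimes X,Y).$$

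For the second part, the plan is to exhibit and then verify a canonical comparison map. Applying $\Phi$ to the counit $\uHom_{\cat{M}}(X,Y)\otimes X\to Y$ and base changing to $\cat{M}\otimes_{\cat{A}}\cat{B}$, one obtains a morphism in $\cat{M}\otimes_{\cat{A}}\cat{B}$ of the form $\Phi(\uHom_{\cat{M}}(X,Y))\otimes X\to Y$, which by the universal property above corresponds to a canonical comparison map
$$\alpha\colon \Phi(\uHom_{\cat{M}}(X,Y))\longrightarrow \uHom_{\cat{M}\otimes_{\cat{A}}\cat{B}}(X,Y).$$
To check $\alpha$ is an equivalence, it suffices to check it becomes an equivalence after applying $\Map_{\cat{B}}(b,-)$ for $b$ ranging over a set of compact generators of $\cat{B}$; by rigidity every compact $b$ is dualizable.

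The crucial input is the description of the relative tensor product using rigidity. Since $\cat{A}$ is rigid, the relative tensor product $\cat{M}\otimes_{\cat{A}}\cat{B}$ may be computed as a colimit of a two-sided bar construction that collapses: more precisely, rigidity of $\cat{A}$ makes the bar simplicial object split, yielding for dualizable $b\in\cat{B}$ the formula $\Map_{\cat{M}\otimes_{\cat{A}}\cat{B}}(b\otimes X,Y)\simeq \Map_{\cat{M}}(X, (\text{image of }b^{\vee})\otimes Y)$ after passing through the $\cat{A}$-action, where we interpret $b^\vee$ via $\Phi$-coaction on $\cat{M}\otimes_{\cat{A}}\cat{B}$. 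Concretely, for $b$ in the essential image of $\Phi$, say $b=\Phi(a)$ with $a$ compact-dualizable in $\cat{A}$, both sides unwind to $\Map_{\cat{M}}(a\otimes X, Y)$. The general case follows because the subcategory generated under colimits by $\Phi(\cat{A}^{c})$ inside $\cat{B}$ contains all dualizable (equivalently, compact) objects by rigidity of $\cat{B}$ together with the fact that a monoidal functor between rigid compactly generated categories sends compact generators to a generating set of dualizables, and both sides of $\alpha$ commute with the relevant colimits in $b$.

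The main obstacle will be formalizing the rigidity-based splitting of the bar construction for the relative tensor product in the $\infty$-categorical setting, i.e.\ producing the explicit formula $\uHom_{\cat{M}\otimes_{\cat{A}}\cat{B}}(b\otimes X,Y)\simeq b^\vee\otimes \Phi(\uHom_{\cat{M}}(X,Y))$ and checking the required coherences; this is precisely the content of the cited \cite[Prop.~3.5.1]{ho li}, which we invoke to conclude.
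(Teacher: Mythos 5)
The paper does not actually prove this proposition: it is dispatched with the single sentence ``The following is standard; see Proposition 3.5.1 in \cite{ho li}.''  Your construction of the enrichment via the adjoint functor theorem, and of the comparison map $\alpha$ from the counit together with the universal property of $\uHom$, is the standard route and is fine as far as it goes.

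The gap is in your reduction of the general case to $b$ in the image of $\Phi$: the claim that the colimit closure of $\Phi(\cat{A}^{c})$ in $\cat{B}$ contains all compact (equivalently dualizable) objects is false in general.  Take $\Phi: \cat{Vect}_k \rightarrow \Rep(\bG_m)$, the trivial-representation functor between two rigid compactly generated symmetric monoidal categories: the image of the compact generator is the trivial representation, whose colimit closure contains no nontrivial character, yet every character is compact and dualizable.  (A monoidal functor preserves duals, but its image need not generate the target.)  Note that the proposition is still true in this example, which shows the argument must proceed differently: rigidity forces the right adjoint $\Phi^{R}$ to be colimit-preserving and to satisfy the projection formula $\Phi^{R}(b \otimes \Phi(a)) \simeq \Phi^{R}(b) \otimes a$, and likewise the right adjoint of the canonical functor $\cat{M} \rightarrow \cat{M} \otimes_{\cat{A}} \cat{B}$ is continuous and $\cat{A}$-linear; one then computes, for an arbitrary dualizable $b \in \cat{B}$, both $\Map_{\cat{B}}(b, \Phi(\uHom_{\cat{M}}(X,Y)))$ and $\Map_{\cat{B}}(b, \uHom_{\cat{M}\otimes_{\cat{A}}\cat{B}}(X,Y)) \simeq \Map_{\cat{M}\otimes_{\cat{A}}\cat{B}}(X, b^{\vee} \otimes Y)$ as $\Map_{\cat{M}}(X, \Phi^{R}(b^{\vee}) \otimes Y)$, with no need for $b$ to lie in the image of $\Phi$.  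Since you ultimately defer the technical content to \cite[Prop.~3.5.1]{ho li} --- exactly the citation the paper itself leans on --- the proposal is salvageable, but the generation step as written would fail.
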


In view of Proposition \ref{modcat hom}, the computation of internal $\Hom$s allows us to deduce results on categories over $\bA^1/\bG_m$ from results at its special and generic fiber.
\begin{prop}\label{prop A1 reduce}
Let $F: \cat{C} \rightarrow \cat{D}$ be a colimit-preserving functor of presentable $\QCoh(\bA^1/\bG_m)$-module categories such that $F \otimes_{\QCoh(\bA^1/\bG_m)} \QCoh(\bG_m/\bG_m)$ and $F \otimes_{\QCoh(\bA^1/\bG_m)} \QCoh(\{0\}/\bG_m)$ are equivalences.  Then, $F$ is an equivalence.
\end{prop}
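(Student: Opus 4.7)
The plan is to deduce the equivalence from a Nakayama-type detection principle on $\bA^1/\bG_m$ combined with an adjoint-functor argument. The key principle I would first establish is the following: for any $\QCoh(\bA^1/\bG_m)$-module category $\cat{M}$ and any object $m \in \cat{M}$, one has $m \simeq 0$ if and only if its images under the two base change functors $\pi_0^*: \cat{M} \rightarrow \cat{M} \otimes_{\QCoh(\bA^1/\bG_m)} \QCoh(\{0\}/\bG_m)$ and $\pi_1^*: \cat{M} \rightarrow \cat{M} \otimes_{\QCoh(\bA^1/\bG_m)} \QCoh(\bG_m/\bG_m)$ both vanish. Letting $t$ denote the weight-$1$ coordinate function on $\bA^1$, the first image is identified with $\mathrm{cofib}(t \cdot: m \rightarrow m)$ and the second with the localization $m[t^{-1}]$; if the first vanishes then $t$ acts invertibly on $m$, so $m \simeq m[t^{-1}]$, and vanishing of both then forces $m \simeq 0$.

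Next, since $F$ is colimit-preserving between presentable $\infty$-categories, it admits a right adjoint $G$ by the adjoint functor theorem. Because $\QCoh(\bA^1/\bG_m)$ is rigid compactly-generated symmetric monoidal (it is generated under colimits by its dualizable objects), the right adjoint $G$ is automatically $\QCoh(\bA^1/\bG_m)$-linear, and base change along the two maps $\{0\}/\bG_m \rightarrow \bA^1/\bG_m$ and $\bG_m/\bG_m \rightarrow \bA^1/\bG_m$ preserves adjunctions. Thus the unit $\eta: \mathrm{id}_{\cat{C}} \rightarrow GF$ and counit $\epsilon: FG \rightarrow \mathrm{id}_{\cat{D}}$ are $\QCoh(\bA^1/\bG_m)$-linear natural transformations whose base changes at the two fibers yield the units and counits of the adjunctions $F_0 \dashv G_0$ and $F_1 \dashv G_1$; by hypothesis $F_0$ and $F_1$ are equivalences, so these base-changed units and counits are all equivalences.

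For any $c \in \cat{C}$, the cofiber of $\eta_c$ is a $\QCoh(\bA^1/\bG_m)$-linear construction whose base changes compute the cofibers of the (invertible) units of $F_0 \dashv G_0$ and $F_1 \dashv G_1$, hence vanish at both fibers. The Nakayama principle then yields that $\mathrm{cofib}(\eta_c) \simeq 0$, so $\eta$ is an equivalence; the analogous argument for $\epsilon$ shows that $F$ is an equivalence. The main technical subtlety is the claim that $G$ is automatically $\QCoh(\bA^1/\bG_m)$-linear and that base change preserves the adjunction, which relies on rigidity of the base and fits squarely within the formalism of Proposition \ref{modcat hom} already used in the paper.
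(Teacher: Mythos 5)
Your detection principle --- an object $m$ of a $\QCoh(\bA^1/\bG_m)$-module category vanishes if and only if its images at the two fibers vanish --- is correct, and your proof of it (vanishing at $\{0\}/\bG_m$ forces $t$ to act invertibly, whence $m \simeq m[t^{-1}] \simeq 0$) is if anything cleaner than the corresponding step in the paper, which runs the two fibers in the opposite order via the local cohomology triangle and a spectral sequence. The gap is in the second half. The right adjoint $G$ of a colimit-preserving $\QCoh(\bA^1/\bG_m)$-linear functor between presentables is a priori only \emph{lax} linear, and rigidity of the base upgrades the lax structure maps $a \otimes G(d) \rightarrow G(a \otimes d)$ to equivalences only for \emph{dualizable} $a$ (by the usual adjunction juggle with $a^\vee$); for general $a$ one needs $G$ to commute with the colimits presenting $a$ by dualizables, i.e.\ one needs $G$ continuous, which nothing in the hypotheses guarantees. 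Already over $\cat{Vect}_k$ the lax map $V \otimes G(d) \rightarrow G(V \otimes d)$ fails to be an equivalence for infinite-dimensional $V$ when $G$ does not preserve direct sums. Concretely: base change of the adjunction along $\{0\}/\bG_m \hookrightarrow \bA^1/\bG_m$ is fine, since the pushed-forward structure sheaf is perfect, hence dualizable; but base change along $\bG_m/\bG_m \hookrightarrow \bA^1/\bG_m$ requires the lax structure map for $k[t,t^{-1}] = \colim_n \cO(n)$, i.e.\ it requires $G$ to commute with the filtered colimit inverting $t$ (equivalently, to preserve $t$-torsion objects). So the identification of the generic-fiber restriction of $\eta_c$ with the unit of the localized adjunction --- exactly what your Nakayama step needs at that fiber --- is unjustified, and this is precisely the point you flagged and attributed to rigidity; rigidity alone does not deliver it.

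The repair, which is what the paper does, is to never base change the adjunction. For full faithfulness, apply your detection principle not to $\mathrm{cofib}(\eta_c) \in \cat{C}$ but to the object $\mathrm{cofib}\bigl(\uHom_{\cat{C}}(c,c') \rightarrow \uHom_{\cat{D}}(Fc,Fc')\bigr)$ of $\QCoh(\bA^1/\bG_m)$ itself: compatibility of internal $\Hom$-objects with the two base changes is exactly Proposition \ref{modcat hom}, so this cofiber vanishes at both fibers and hence vanishes. For essential surjectivity, form the quotient $\cat{D}/\cat{C}$ of presentable $\QCoh(\bA^1/\bG_m)$-module categories; since $- \otimes_{\QCoh(\bA^1/\bG_m)} \QCoh(\bG_m/\bG_m)$ and $- \otimes_{\QCoh(\bA^1/\bG_m)} \QCoh(\{0\}/\bG_m)$ preserve cofiber sequences, the quotient vanishes at both fibers, and applying the detection principle to $\uEnd(X) \in \QCoh(\bA^1/\bG_m)$ for each object $X$ of the quotient kills the quotient. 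Your first paragraph survives intact as the key lemma; the unit/counit mechanism should be replaced by these internal-$\Hom$ arguments.
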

\begin{proof}
Since the $\Hom$-spaces of $\cat{C}, \cat{D}$ are naturally enriched as objects of $\QCoh(\bA^1/\bG_m)$, by Proposition \ref{modcat hom}, for full faithfulness it suffices to check the claim for objects in $\QCoh(\bA^1/\bG_m)$.  Every object in $M \in \QCoh(\bA^1/\bG_m)$ sits in an exact triangle $R\Gamma_{\{0\}/\bG_m}(M) \rightarrow M \rightarrow M|_U$, so it suffices to show that the local cohomology of $M$ at 0 vanishes if and only if the derived stalk at 0 does, which is a standard argument by spectral sequences (see, for example, the proof of Proposition 2.2.13 in \cite{CD}).  For essential surjectivity, it suffices to show that the quotient $\cat{C}/\cat{D} \not\simeq 0$, i.e. it suffices to show that if an object restricts to zero on both fibers, then it is zero.  An object $X$ is zero if and only if $\End(X) \simeq 0$, and the claim follows by the same argument as full faithfulness.
\end{proof}

Using the above, we can deduce the desired descent statement.
\begin{prop}\label{d mod descent}
The functor in Proposition \ref{dmod pullback prop} is a sheaf in the smooth topology.
\end{prop}
\begin{proof}
For the sheaf property, by Proposition \ref{prop A1 reduce} it suffices to check at the generic and special points.  The generic point, which we identify via Proposition \ref{dmod heart}, is the claim for $\cD$-modules \cite[Prop. 3.2.2]{crystals and dmod}. The special fiber is $\QCoh(\bT^*_X/\bG_m)$, where the claim follows by Theorem \ref{graded functoriality thm} and Proposition \ref{odd tangent indcoh descent}.
\end{proof}

\subsubsection{} Using descent, we can define filtered $\cD$-modules on stacks.
\begin{defn}\label{def filt dmod stack}
Let $X$ be a smooth Artin 1-stack.  We define the category of \emph{filtered $\cD$-modules} on $X$ to be
$$F\cD(X) := \lim_{\substack{U \rightarrow X \\ \mathrm{smooth}}} F\cD(U).$$
By Proposition \ref{dmod pullback prop} the smooth pullback is $\QCoh(\bA^1/\bG_m)$-linear and compatible with restriction functors.  Thus, we have associated graded and forgetful functors
$$\begin{tikzcd}
\cD(X) & & F{\cD}(X) \arrow[ll, "\un"'] \arrow[rr, "\gr"] & & \QCoh(\mathbb{T}^*_X/\bG_m)
\end{tikzcd}$$
and a restriction functor $\res: F\cD(X) \rightarrow \QCoh(X)^{\fil}$ which commutes with colimits and limits by construction, thus has a left adjoint, which we define to be the induction functor $\ind: \QCoh(X)^{\fil} \rightarrow F\cD(X)$.  Furthermore, for a smooth representable map $f: X \rightarrow Y$ of Artin 1-stacks, we can define the pullback $f^!: F\cD(Y) \rightarrow F\cD(X)$ by descent.  For a proper representable map $f: X \rightarrow Y$ of Artin 1-stacks, we can define the pushforward $f_*: F\cD(X) \rightarrow F\cD(Y)$ by passing to left adjoints, descent, and base change, following Propositions \ref{hodge id} and \ref{dmod base change}.
\end{defn}

\medskip

We now define small subcategories of $F\cD(X)$ when $X$ is a stack.  Unlike the case of schemes, there are two different small subcategories we may consider.
\begin{defn}\label{def coh d mod}
We define the following small full subcategories of $F\cD(X)$ when $X$ is a stack.
\begin{enumerate}
\item We define the full subcategory of \emph{safe $\cD$-modules} $F\cD_s(X) \subset F\cD(X)$ to be the idempotent-completion of the essential image of $\Coh(X)^{\fil}$ under the induction functor (similar to the definition in Definition \ref{filt dmod schemes}); by the argument in \cite[Thm. 8.1.1]{QCA} these are exactly the compact objects of $F\cD(X)$, which is compactly generated.  
\item We define the full subcategory of \emph{coherent $\cD$-modules} $F\cD_c(X)$ to consist of those objects $\cF \in F\cD(X)$ such that for any smooth map $p: U \rightarrow X$ from a smooth affine scheme $U$, we have $p^! \cF \in F\cD_c(U)$.
\end{enumerate}
\end{defn}
%

\subsection{Koszul duality}\label{kd sec}

In this section we prove the Koszul duality equivalence.  We first discuss the case of schemes; this is a calculation which is well known (e.g. \cite[\textsection 1.8]{kapranov}\cite[\textsection B.4]{coderived}) but we need to take care to produce a statement at the level of $\infty$-categories.  We then deduce the statement for stacks.

\subsubsection{} \label{kd schemes sec} We define certain well-known resolutions and explicit formulas that appear in Koszul duality; see  \cite[Lem. 1.5.27]{HTT} for the analogous resolutions in the non-filtered setting, and \cite[\textsection 8.4.13]{MHM} for filtered versions.

\begin{defn}\label{spencer}
Recall our weight conventions: $|\delta| = |\Omega^1_X| = -1$ and $|t| = 1$.  We denote by $\dddot{\Omega}^k_X$ the weight $-k$ (degree $k$) part of the $\cO_X$-bimodule algebra $\dddot{\Omega}^\bullet_X$.  Explicitly, we have isomorphisms
$$\dddot{\Omega}^k_X \simeq \Omega^k_X \oplus \delta \Omega^{k-1}_X  \simeq \Omega^k_X \oplus  \Omega^{k-1}_X \delta \;\;\;\;\;\;\;\;\;\; \omega + \delta \omega' = \omega +  \omega' \delta + (-1)^{|\omega|}d\omega'.$$
Likewise, we let $\wt{\cD}_X^k = \cD_X^{\leq k} t^k$ denote the weight $k$ part of the Rees algebra.  Let 
$$\mathrm{coev}: \cO_X \longrightarrow \dddot\Omega^1 \otimes_{\cO_X} \wt\cD^1_X$$
be the coevaluation map (note the tensor over $\cO_X$ and not $\wt\cO_X$).  We define the \emph{deformed de Rham complex} $d\mathcal{R}_X$ to be the weight-graded $(\dddot{\Omega}^\bullet_X, \wt{\cD}_X)$-bimodule with underlying bigraded complex of sheaves $\dddot{\Omega}^\bullet_X \otimes_{\OO_X} \wt{\cD}_X$ with internal differential $d: \dddot{\Omega}^k_X  \otimes_{\cO_X} \wtD_X \rightarrow \dddot{\Omega}^{k+1}_X \otimes_{\cO_X} \wtD_X$ given by $\omega \otimes u \mapsto \omega \, \mathrm{coev}(1) u$, i.e. in local coordinates
\begin{equation}
\omega \otimes u \mapsto  \omega\delta  \otimes u t + \sum_{i=1}^n \omega \, dx_i \otimes \frac{\partial}{\partial x_i}u t.
\end{equation}
We note that for a vector field $\theta$ and top-dimensional differential form $\omega$, the differential sends
\begin{equation}\label{lie derivative}
\iota_\theta(\omega) \otimes 1 \mapsto \iota_\theta(\omega) \delta  \otimes t + \omega \otimes \theta t = (\delta \iota_\theta(\omega) + (-1)^{|\omega|}\cL_\theta(\omega)) \otimes t + \omega \otimes \theta t
\end{equation}
where $\iota_\theta$ is contraction with $\theta$ and $\cL_\theta$ is the Lie derivative along $\theta$.  We define two $(\wt{\cD}_X, \dddot{\Omega}^\bullet_X)$-bimodules: the locally $\dddot\Omega^\bullet_X$-semifree \emph{deformed Koszul complex} $\cK_X$ is the $\dddot{\Omega}^\bullet_X$-linear graded dual of $d\cR_X$, and the locally $\wt\cD_X$-free \emph{deformed Spencer complex} $\cS_X$ is the $\wt{\cD}_X$-linear graded dual of $d\cR_X$.    We can write $\cS_X = \wt\cD_X \otimes_{\cO_X} \dddot\Omega_X^\vee$, where $\dddot\Omega_X^\vee = \intHom_{\cO_X}(\dddot\Omega_X, \cO_X)$ is the right dual, i.e. as right $\cO_X$-modules, has differential
$$u \otimes \eta \mapsto ut \otimes \eta(- \cdot \delta) + \sum u \displaystyle \frac{\partial}{\partial x_i} t \otimes \eta(- \cdot dx_i).$$
For a vector field $\theta$ and dual-to-top-dimensional $\eta$, the differential sends
\begin{equation}\label{lie derivative dual}
1 \otimes (\eta \circ \iota_\theta) \mapsto t \otimes \eta(\iota_\theta(-))\cdot \delta + t \otimes  (-1)^{|\omega|}\eta(\cL_\theta(-)) + \theta t \otimes \eta.
\end{equation}
\end{defn}
%



The following result is standard (e.g. in \cite{kapranov}); one writes down the natural filtration on the resolutions defined above, and checks that their associated gradeds are sums of Koszul resolutions. 
\begin{lemma}\label{koszul acyclic}
The complexes $\cS_X$ and $\cK_X$ are both quasi-isomorphic to $\OO_X$ as $(\wtD_X, \dddot{\Omega}^{\bullet}_X)$-modules.  The complex $d\cR_X$ is quasi-isomorphic to $\wt{\omega}_X \otimes k[\delta] t^{-1}$.
\end{lemma}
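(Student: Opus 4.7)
The plan is to establish each of these three quasi-isomorphisms by the standard filtered/Rees technique: introduce a compatible filtration on each complex, identify the associated graded with a classical Koszul-type complex, and invoke well-known acyclicity. Since all three complexes are built from locally free $\cO_X$-modules in each bidegree, associated-graded quasi-isomorphism will lift by a standard bounded-below spectral sequence convergence argument.

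For the deformed de Rham complex $d\cR_X = \dddot{\Omega}^\bullet_X \otimes_{\cO_X} \wt{\cD}_X$, I would filter by the order of $\wt\cD_X$, namely $F^p := \dddot\Omega^\bullet_X \otimes_{\cO_X} \wt\cD_X^{\leq p}$. The differential $\omega \otimes P \mapsto \omega\,\mathrm{coev}(1)\,P$ preserves this filtration since $\mathrm{coev}(1) \in \dddot\Omega^1_X \otimes_{\cO_X} \wt\cD^1_X$ raises $\wt\cD$-order by exactly one. The associated graded is
\begin{equation*}
\gr d\cR_X \simeq \dddot\Omega^\bullet_X \otimes_{\cO_X} \Sym^\bullet_{\cO_X} \cT_X
\end{equation*}
with differential the Koszul-type differential coming from the pairing $\Omega^1_X \otimes_{\cO_X} \cT_X \to \cO_X$ combined with the internal $\delta$-differential of $\dddot\Omega^\bullet_X$. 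A subsequent subfiltration by $\Sym$-weight reduces this to the classical linear Koszul complex for a perfect pairing of vector bundles, whose cohomology is concentrated in top degree with value $\omega_X$. Tracking the residual $\delta$-contribution and the resulting cohomological and weight shifts then matches the target $\wt\omega_X t^{-1}[-1]$.

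The quasi-isomorphisms $\cS_X \simeq \cO_X$ and $\cK_X \simeq \cO_X$ will follow either by directly running the analogous filtered arguments on the complexes themselves, or by dualizing the resolution of $d\cR_X$. Dualizing over $\wt\cD_X$ and invoking the self right-duality of $\wt\cD_X$ up to the side-changing twist by $\wt\omega_X$ identifies $\cS_X$ with the $\dddot\Omega$-linear dual of $\wt\omega_X t^{-1}[-1]$, which is $\cO_X$. Dualizing over $\dddot\Omega^\bullet_X$ and using the Poincar\'e-type $\dddot\Omega$-self-duality yields $\cK_X \simeq \cO_X$ analogously. All these dualities are well-defined termwise because each bigraded piece is locally free of finite rank over $\cO_X$.

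The main obstacle will be cleanly handling the coupling of the $\delta$-differential internal to $\dddot\Omega^\bullet_X$ with the Koszul-type differential from the coevaluation: these do not split as commuting differentials on a bicomplex, but rather assemble into a twisted total differential. Managing this carefully through a two-step filtration (first by $\wt\cD$-order, then by $\dddot\Omega$-degree) and the associated spectral sequence is the most technical step. A more conceptual alternative, which I would pursue in parallel to double-check, is to recognize $d\cR_X$ as the Rees/filtered deformation of the classical de Rham resolution of $\omega_X$ by $\Omega^\bullet_X \otimes_{\cO_X} \cD_X$, enhanced by the contracting $\delta$-structure of $\dddot\Omega^\bullet_X$; such an identification would immediately reduce the claim to well-known $\cD$-module facts together with flatness of the Rees construction in the parameter $t$.
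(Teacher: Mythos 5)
Your overall strategy is the paper's: filter so that the associated graded becomes a classical Koszul complex, then propagate through a finite filtration. (The paper's device is that right multiplication by $t$ embeds the weight-$(r-1)$ component of $d\cR_X$ into the weight-$r$ component as a subcomplex, and one inducts on the weight using the quotient complexes $\dddot{\Omega}^\bullet_X \otimes \Symp^{r+\bullet}\Theta_X$; your $F^p = \dddot{\Omega}^\bullet_X \otimes \wt\cD_X^{\leq p}$ is the same device up to reindexing, though as written your justification is self-contradictory --- a differential that ``raises $\wt\cD$-order by exactly one'' does not preserve $\{\text{order}\leq p\}$.) The genuine gap sits exactly where you predict: the two summands of $\mathrm{coev}(1)=\delta\otimes t+\sum dx_i\otimes\partial_i t$ have different leading behavior (the first multiplies by $t$ without raising the order of the differential operator; the second raises the order by one), so for \emph{any} filtration compatible with the differential exactly one of them survives to the associated graded. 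The associated graded is therefore \emph{not} the single classical Koszul complex for the perfect pairing of the rank-$(\dim X{+}1)$ bundles $\dddot{\Omega}^1_X$ and $\cD^{\leq 1}_X t$; it is a direct sum of two shifted copies of the rank-$\dim X$ Koszul complex for $\Omega^1_X$ against $\Theta_X$, with cohomology in \emph{two} adjacent (weight, degree) spots, $(-\dim X,\dim X)$ and $(-\dim X{-}1,\dim X{+}1)$. The whole content of the lemma is then the $d_1$ differential connecting these two classes, which ``tracking the residual $\delta$-contribution'' does not supply. (The paper's own write-up elides the same point when it asserts the graded pieces are acyclic outside weight $-\dim X-1$.)

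That the cancellation is real can be seen for $X=\bA^1$ in weight $-1$, where $d\cR_X$ reduces to the two-term complex $\dddot{\Omega}^1_X\otimes\cO_X\to\dddot{\Omega}^2_X\otimes\cD^{\leq 1}_X t$, $a\,dx+b\,\delta\mapsto dx\delta\otimes(a\mp b\partial)t$, an isomorphism; so this weight is acyclic even though both Koszul classes appear on the associated graded, whereas the weight $-1$ component of $\wt{\omega}_X t^{-1}[-1]$ as defined in the paper is nonzero. So your final step ``matches the target'' cannot go through as stated: after the cancellation the cohomology of $d\cR_X$ comes out concentrated in the single weight $-\dim X-1$ with $t$ acting by zero, and you must reconcile this with (or correct) the stated target before closing the argument. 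Your ``conceptual alternative'' is actually the cleanest way to do this: the left decomposition $\dddot{\Omega}^k_X\simeq\Omega^k_X\oplus\delta\Omega^{k-1}_X$ exhibits $d\cR_X$ as an extension of the filtered de Rham resolution of $\wt{\omega}_X$ by a shift of itself, and identifying the connecting map settles everything; I would pursue that rather than the spectral sequence. Finally, deducing $\cS_X\simeq\cO_X$ and $\cK_X\simeq\cO_X$ by dualizing is risky: the identification of $R\intHom_{\wt\cD_X}(\wt{\omega}_X,\wt\cD_X)$ is itself normally proved via the Spencer resolution, so you court circularity; the paper runs the direct filtration argument for all three complexes, which is safer.
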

\begin{proof}
Let $d = \dim(X)$.  We first consider for $d\cR_X$; in weights $-1, 0, 1$ the complex looks like:
$$\begin{tikzcd}[row sep=tiny]
 & \dddot\Omega_X^1 \otimes \wt\cD^0_X \arrow[r] & \cdots  \arrow[r] & \dddot\Omega^d_X \otimes \wt\cD_X^{d-1} \arrow[r] & \dddot\Omega^{d+1} \otimes \wt{\cD}_X^{d} \\
\cO_X \arrow[r] & \dddot\Omega_X^1 \otimes \wt\cD_X^1 \arrow[r]&  \cdots \arrow[r] &  \arrow[r] \dddot\Omega^{d}_X \otimes \wt\cD_X^{d} \arrow[r] & \dddot\Omega^{d+1} \otimes \wt{\cD}_X^{d+1} \\
\wt{\cD}_X^1 \arrow[r] & \dddot\Omega_X^1 \otimes \wt\cD_X^2 \arrow[r] & \cdots \arrow[r] & \dddot\Omega^{d}_X \otimes \wt\cD_X^{d+1} \arrow[r] & \dddot\Omega^{d+1}_X \otimes \wt{\cD}_X^{d+2}.
\end{tikzcd}$$
Each row is a fixed internal weight $r$, and each column a fixed cohomological degree.  The multiplication by $t$ map induces a injective map from one weight to the next, thus to compute the cohomology of the complex we can compute the cohomology of the associated graded pieces:
$$\Theta_X^r  \rightarrow \dddot\Omega^1_X \otimes \Theta_X^{r+1} \rightarrow \cdots \rightarrow \dddot\Omega_X^{d} \otimes \Theta_X^{r+d} \rightarrow \dddot\Omega^{d+1}_X \otimes \Theta^{r+d+1}$$
which are acyclic except when $r=-d-1$, where the complex is precisely $\dddot\Omega^{d+1}_X$.  Note that $t$ acts freely and $\delta$ commutes with $\dddot\Omega^{d+1}_X$, giving the claimed description.  We leave the argument for $\cS_X$ and $\cK_X$ to the reader.
\end{proof}

\subsubsection{} We now establish Koszul duality for schemes.  Similar statements have appeared in many forms in the literature \cite{kapranov, coderived,  loops and conns, foliations 2}.
\begin{thm}\label{filtered kd schemes}\label{kd schemes}
Let $X$ be a smooth scheme.  We have compatible equivalences of $\cat{Vect}^{\omega B\bG_a^{\gr}} \simeq \QCoh(\bA^1/\bG_m)^{\shear}$-module categories
$$\begin{tikzcd}
\IndCoh(\bT_X[\ng1])^{\omega B\bG_a} \arrow[r, "\simeq"', "\kappa"] \arrow[d, shift left, "z^!"]  & F^r\cD(X)^{\shear}  \arrow[d, "\res", shift left] \\
\QCoh(X)^{\omega B\bG_a} \arrow[u, shift left, "z_*"] \arrow[r, "\simeq"] & \QCoh(X)^\fil \arrow[u, shift left, "\ind"]
\end{tikzcd}$$
Furthermore, the equivalence $\kappa$, and the restriction functors $z^!$ and $\res$, are compatible with pullback.  When $f$ is proper, by passing to left adjoints, the equivalence is compatible with pushforward.
\end{thm}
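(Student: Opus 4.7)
The plan is to apply Barr--Beck--Lurie to the adjunction $(z_*, z^!)$ on the $\Omega$-side and monadically identify the resulting algebra with $\wt{\cD}_X^{\shear}$. First, I would invoke Proposition \ref{identify omega} to identify
$$\QCoh(\bT_X[\ng1])^{B\bG_a^{\gr}} \simeq \Mod_{\QCoh(X)^{\gr}}(\dddot{\Omega}^{-\bullet}_X),$$
under which $z_*$ corresponds to the induction functor along $\cO_X \hookrightarrow \dddot{\Omega}^{-\bullet}_X$ and $z^!$ to its right adjoint $R\underline{\Hom}_{\dddot{\Omega}_X^{-\bullet}}(\cO_X, -)$. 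Passing to compact objects then ind-completing yields the analogous description for the renormalized $\IndCoh(\bT_X[\ng1])^{\omega B\bG_a^{\gr}}$.

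Next, I would apply Barr--Beck--Lurie: the functor $z^!$ is colimit-preserving after renormalization, and is conservative since the essential image of $z_*$ generates. The monad $z^! z_*$ acts on $\QCoh(X)^{\omega B\bG_a^{\gr}}$, and to compute it I would use the deformed Spencer resolution $\cS_X \to \cO_X$ of Definition~\ref{spencer}, which is locally $\wt{\cD}_X$-free and quasi-isomorphic to $\cO_X$ by Lemma~\ref{koszul acyclic}. This computes $\mathcal{E}nd_{\dddot{\Omega}^{-\bullet}_X}(\cO_X) \simeq \wt{\cD}_X^{\shear}$ (up to the standard side-changing twist by $\wt{\omega}_X$). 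As in Propositions~\ref{S1 action on shift} and~\ref{identify omega}, the relevant algebra structure lies in the heart of the 1-sheared $t$-structure, so the 1-categorical computation lifts uniquely to an $\infty$-categorical identification by \cite[App.~A]{CD}. The bottom row equivalence $\QCoh(X)^{\omega B\bG_a^{\gr}} \simeq \QCoh(X)^{\fil}$ follows from Proposition~\ref{ren bga} combined with the trivial $B\bG_a^{\gr}$-action on $X$. Barr--Beck--Lurie then yields
$$\kappa: \IndCoh(\bT_X[\ng1])^{\omega B\bG_a^{\gr}} \xrightarrow{\;\simeq\;} \Mod_{\QCoh(X)^{\fil}}(\wt{\cD}_X^{\shear}) = F^r\cD(X)^{\shear},$$
with the compatibility of $z^!$ with $\res$ built into the construction (both are the monadic forgetful functors), and compatibility of $z_*$ with $\ind$ following by passage to left adjoints.

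For functoriality under a smooth map $f: X \to Y$: both sides satisfy smooth descent (Proposition~\ref{d mod descent} on the $\cD$-side; the $\omega B\bG_a^{\gr}$-invariants of the descent result of Proposition~\ref{odd tangent indcoh descent} on the $\Omega$-side). The key compatibility is that the monad identification $z^! z_* \simeq \wt{\cD}_X^{\shear}$ is natural in smooth morphisms: the Spencer complex $\cS_X$ and its counterpart $\cS_Y$ are both constructed functorially from the tangent bundle and the de Rham differential, and the commutation $f^! z^! z_* \simeq z^! z_* f^!$ follows from base-change for the Cartesian square defining the zero section, together with the fact that $df$ preserves the zero section and $f^!$ commutes with $\res$ and $\ind$ on the $\cD$-side by Proposition~\ref{dmod pullback prop}.

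The main obstacle is the explicit identification of the monad with $\wt{\cD}_X^{\shear}$: one must track carefully the weight and cohomological gradings involved in the Tate shearing, make the side conventions (right $\cD$-modules vs. right $\dddot{\Omega}$-modules, including the $\wt{\omega}_X$-twist) consistent, and verify that the multiplicative structure on $\mathcal{E}nd_{\dddot{\Omega}^{-\bullet}_X}(\cO_X)$ really reproduces the Rees algebra product (rather than merely matching the underlying graded sheaves). This is ultimately a direct computation with the bimodules $\cS_X$, $\cK_X$, and $d\cR_X$ of Definition~\ref{spencer}, but its careful execution, including the lift from 1-categorical to $\infty$-categorical algebra structures via the heart of the sheared $t$-structure, is the main technical content of the proof.
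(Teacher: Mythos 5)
Your overall strategy --- Barr--Beck--Lurie for the adjunction $(z_*, z^!)$, identification of the monad with $\wt{\cD}_X$ via the deformed resolutions of Definition \ref{spencer}, and the lift from $1$-categorical to $\infty$-categorical algebra structures through the heart of the sheared t-structure --- is the same as the paper's (the paper additionally factors the monadic descent through $\QCoh(X)^{\bG_m}$ via the adjunction $(\eta, \eta^R)$ induced by $B\bG_a^{\gr} \to \bG_m$, but this is a cosmetic difference). Two points, however, need correction. First, a technical slip: to compute $R\intHom_{\dddot{\Omega}^{\bullet}_X}(\cO_X, \cO_X)$ you need a resolution of $\cO_X$ that is semifree \emph{over} $\dddot{\Omega}^{\bullet}_X$, namely the deformed Koszul complex $\cK_X$, not the Spencer complex $\cS_X$ (which is $\wt{\cD}_X$-free and resolves $\cO_X$ on the other side). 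The computation $\intHom_{\dddot\Omega^\bullet_X}(\cK_X, \cO_X) \simeq \cO_X \otimes_{\dddot\Omega_X} d\cR_X \simeq \wt{\cD}_X$ then lands directly in right $\wt{\cD}_X$-modules because $d\cR_X$ is a $(\dddot{\Omega}^\bullet_X, \wt{\cD}_X)$-bimodule; no side-changing $\wt{\omega}_X$-twist enters at this stage.

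The more serious gap is in the functoriality claim. The square relating the zero sections
$$\begin{tikzcd} X \arrow[r, "z_X"] \arrow[d, "f"'] & \bT_X[\ng1] \arrow[d, "df"] \\ Y \arrow[r, "z_Y"] & \bT_Y[\ng1] \end{tikzcd}$$
is \emph{not} Cartesian for a general smooth $f$: the derived preimage $df^{-1}(Y)$ is the relative odd tangent bundle $\bT_{X/Y}[\ng1] = \Spec_X \Sym_X \Omega^1_{X/Y}[1]$, which equals $X$ only when $f$ is \'etale. So base change does not give the commutation $z_X^! z_{X*} f^! \simeq f^! z_Y^! z_{Y*}$, and indeed the failure is measured exactly by the relative de Rham algebra $\Omega^\bullet_{X/Y}$. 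This is why the paper's proof must show, by an explicit acyclicity computation, that the deformed relative Spencer-type complex
$$\Omega^0_{X/Y} \tens{\cO_X} \wt\cD_X \to \cdots \to \Omega^d_{X/Y} \tens{\cO_X} \wt\cD_X \to \omega_{X/Y} \tens{f^{-1}\cO_Y} f^{-1}\wt\cD_Y$$
is a resolution; this establishes that $\kappa_X \circ (df^!)^{\shear} \circ \kappa_Y^{-1}$ is t-exact, after which the identification with $f^!$ is made $1$-categorically on abelian hearts using Proposition \ref{dmod heart}. Your appeal to smooth descent is also misplaced here: descent is how the \emph{stack} case (Theorem \ref{kd kperf}) is deduced from the scheme case, but for the scheme-level statement the compatibility with smooth pullback must be proved directly, and the acyclicity of the relative deformed complex is the essential input your sketch omits.
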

\begin{proof}
Consider the diagram of adjoint functors
$$\begin{tikzcd} 
 \arrow[r, shift left, "\eta"] \IndCoh(X) \otimes \cat{Vect}^{\bG_m} & \arrow[r, shift left, "z_*"] \IndCoh(X) \otimes \cat{Vect}^{\omega B\bG_a^{\gr}} \arrow[l, "\eta^R", shift left] & \Ind(\Coh(\bT_X[\ng1])^{B\bG_a^{\gr}}) \arrow[l, "z^!", shift left]
 \end{tikzcd}$$
where $\eta$ is induced by the quotient map on group stacks $B\bG_a^{\gr} \rightarrow \bG_m$ and $\eta^R$ is its right adjoint, guaranteed to exist since $\eta$ is continuous; $\eta^R$ is furthermore continuous since $\eta$ preserves compact objects by definition of the renormalization in Definition \ref{ren G inv}.  We claim both $\eta^R$ and $z^!$ (thus their composition) are monadic; they preserve colimits since their left adjoints preserve compactness, so it remains to argue that they are conservative.  For $\eta^R$, we may identify it with the pushforward functor along $\bA^1/\bG_m \rightarrow B\bG_m$ by shearing by Proposition \ref{ren bga}, which is evidently conservative since $\bA^1$ is affine.  For $z^!$, the claim follows by first reducing to the claim for compact objects, then noting that the forgetful functor for $B\bG_a^{\gr}$-equivariance is conservative, and the claim that the underlying non-equivariant functor $z^!$ is conservative since $z: X \rightarrow \bT_X[\ng1]$ is surjective on geometric points when $X$ is a scheme \cite[Prop. 8.1.2]{indcoh}.

Thus we may apply Barr-Beck-Lurie to the map of monads $\eta^R \eta \rightarrow \eta^R z^! z_* \eta$ on $\IndCoh(X \times B\bG_m)$ to deduce a diagram
$$\begin{tikzcd}
\IndCoh(\bT_X[\ng1])^{\omega B\bG_a} \arrow[r, "\simeq"', "\kappa"] \arrow[d, shift left, "z^!"]  & \Mod_{\QCoh(X)^{\bG_m}}(\eta^R z^! z_* \eta) \arrow[d, "\res", shift left] \\
\QCoh(X)^{\omega B\bG_a} \arrow[u, shift left, "z_*"] \arrow[r, "\simeq"]  \arrow[d, shift left, "\eta^R"] & \Mod_{\QCoh(X)^{\bG_m}}(\eta^R\eta) \arrow[u, shift left, "\ind"] \arrow[ld, shift left] \\
\QCoh(X)^{\bG_m}. \arrow[u, shift left, "\eta"] \arrow[ur, shift left]
\end{tikzcd}$$
It remains to compute the monads explicitly, and the map between them.  The monad $p^* p_*$ is simply tensoring by $\cO(\bA^1) \simeq k[t]$ by Proposition \ref{ren bga}.  Under the identification in Proposition \ref{identify omega}, we identify the monad (using the fact that $\cO_X$ is a compact object in the renormalized category):
$$\eta^R z^! z_* \eta(-) \simeq R\intHom_{\dddot{\Omega}^{-\bullet}_X}(\cO_X, -) \simeq R\intHom_{\dddot{\Omega}^{-\bullet}_X}(\cO_X, \cO_X) \otimes_{\cO_X} -.$$
We apply the Tate unshearing and use the Koszul resolution in Lemma \ref{koszul acyclic} to compute
$$R\intHom_{\dddot\Omega^\bullet_X}(\cO_X, \cO_X) \simeq \intHom_{\dddot\Omega^\bullet_X}(\cK_X, \cO_X) \simeq \cO_X \otimes_{\dddot\Omega_X} d\cR_X \simeq \wt{\cD}_X.$$
We require this identification as algebra objects in the category $\QCoh(X \times X)^{\fil}$.  It is clear that they agree at the level of homotopy categories.  Since $\wt{\cD}_X$ is an algebra object in the heart, by \cite[Prop. A.2.8]{CD} this suffices, and we deduce the main claim of the theorem, as well as compatibility with induction and restriction.

We now deduce compatibility with pullback.  Consider the diagram
$$\begin{tikzcd}[column sep=15ex]
\Mod_{\QCoh(X)}(\dddot\Omega^\bullet_X) \arrow[r, "- \otimes_{\dddot{\Omega}_X} d\cR_X"', "\kappa_X"] & F^r\cD(X)  \\
\Mod_{\QCoh(Y)}(\dddot\Omega^\bullet_Y) \arrow[u, "(df^!)^{\shear}"] & F^r\cD(Y). \arrow[u, "f^!"'] \arrow[l, "- \otimes_{\wt{\cD}_Y} \cS_Y", "\kappa^{-1}_Y"'] 
\end{tikzcd}$$
We produce an equivalence between the $\infty$-functors $f^!$ and $\kappa_X \circ (df^!)^\shear \circ \kappa_Y^{-1}: F^r\cD(Y) \rightarrow F^r\cD(X)$  
by producing a natural equivalence on the corresponding $(f^{-1}\wt\cD_Y, \wt{\cD}_X)$-modules on $\QCoh(X \times Y)^{\fil}$:
\begin{equation}\label{kd funct eq}
f^{-1}\cS_Y \tens{f^{-1}\dddot\Omega_Y} d\cR_X \longrightarrow f^{-1}\wt{\cD}_Y \tens{f^{-1} \wt{\cO}_Y}   f^{-1}\wt\omega_Y^{-1} \tens{\wt\cO_X} \wt{\omega}_{X}
\end{equation}
where the left $f^{-1}\wt\cD_Y$-actions and the right $\wt{\cD}_X$-action on the source are the obvious ones, and the right $\wt{\cD}_X$-action on the target is given by $(m \otimes \eta \otimes \omega) \cdot \theta = (m \cdot f_* \theta) \otimes \omega \otimes \eta + m \otimes (- f_*\theta \cdot \eta \otimes \omega + \eta \otimes \omega \cdot \theta)$.  As a right $\dddot\Omega_Y$-module, $\cS_Y$ is quasi-isomorphic to $\wt\cD_Y \otimes_{\cO_Y} (\Omega^d_Y)^{\vee}[\dim Y]$; in particular, both sides are concentrated in the same cohomological degree, which gives the map.  The relations on the left are imposed by the differential; using the relations (\ref{lie derivative}) and (\ref{lie derivative dual}) the right $\wt\cD_X$-actions agree.  It is clear that the left actions agree, and that the map is an equivalence.  Naturality may be verified 1-categorically since all objects live in the heart of the standard t-structure.
\end{proof}

\begin{exmp}
In the case where $Y = \Spec k$ is a point, (\ref{kd funct eq}) reduces to the claim
$$k \tens{k\langle \delta \rangle} d\cR_X \overset{\simeq}{\longrightarrow} k \tens{k[\delta]} \wt{\omega}_X \overset{\simeq}{\longrightarrow} \wt{\omega}_X$$
which follows from the calculation in Lemma \ref{koszul acyclic}.  Namely, the underlying chain complex on the left is $\Omega^\bullet_X \otimes_{\cO_X} \wt\cD_X$ with differential
$$\omega \otimes u \mapsto \omega \delta \otimes ut + \sum \omega dx_i \otimes \displaystyle\frac{\partial}{\partial x_i} u = (-1)^{|\omega| + 1}d\omega + \sum \omega dx_i \otimes \displaystyle\frac{\partial}{\partial x_i} u.$$
One can perform a similar calculation to check that the underlying complex is $\wt\omega_X$.  The relations are given by the differential from $\Omega^{\dim X - 1}_X \otimes_{\cO_X} \wt\cD_X$, and by (\ref{lie derivative}) the right action of $\wt\cD_X$ is by the Lie derivative.
\end{exmp}

\begin{exmp}
Filtered Koszul duality exchanges the following objects.
\bgroup
\def\arraystretch{1.5}
\begin{center}\begin{tabular}{|c||c|c|c|c|} \hline
$\Omega$ side & $\cO_X$ & $\Omega^\bullet_X$ & $\dddot\Omega^\bullet_X$ & $\Omega^\bullet_{X/Y}$    \\ \hline 
$\cD$ side & $\wt{\cD}_X$ & $\wt{\omega}_X$ & $\wt{\omega}_Xt^{-1}[\ng1]$ & $f^{-1} \wt\cD_Y \otimes  f^{-1}\wt{\omega}_Y^{-1} \otimes \wt{\omega}_X$ \\ \hline
\end{tabular}\end{center}
\egroup
\noindent In particular, this gives an alternative calculation of the cyclic homology of $\QCoh(X)$ as a sum of the Hodge truncated de Rham cohomologies:
$$\cO(\cL X)^{S^1} \simeq \Hom_{\cL X}(\cO_{\cL X}, \cO_{\cL X})^{S^1} \simeq \uHom_{F\cD(X^{\fil})}(\wt{\omega}_X, \wt{\omega}_X)^{\shear} \simeq \left( \bigoplus_{k \in \bZ} \Gamma(X, \Omega^{\geq k}_{X, d})t^{-k}\right)^{\shear}$$
where the $\uHom$ indicates the internal graded $k[t]$-linear $\Hom$. 
\end{exmp}

\subsubsection{} \label{sec kd stacks} We now deduce Koszul duality for stacks via descent.
\begin{thm}[Coherent Koszul duality for stacks]\label{kd kperf}
Let $X$ be a smooth QCA stack.  We have compatible equivalences
$$\begin{tikzcd}
\hCoh(\bT_X[\ng1])^{\Tate^{\mathrm{gr}}} \arrow[d, "\simeq"'] & \hCoh(\bT_X[\ng1])^{\BGA} \arrow[r] \arrow[d, "\simeq"'] \arrow[l] & \hCoh(\bT_X[\ng1]/\bG_m) \arrow[d,"\simeq"'] \\
\cD_c(X) & F\cD_c(X)  \arrow[r, "\gr"'] \arrow[l, "\un"] & \Coh(\mathbb{T}_{X}^*/\bG_m)
\end{tikzcd}$$
functorial with respect to schematic pullback, induction and restriction functors.  The equivalences exchange the notions of singular support.  When $f$ is proper, by passing to left adjoints, the equivalence is compatible with pushforward.
\end{thm}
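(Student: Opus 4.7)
The plan is to deduce the theorem from the scheme case Theorem \ref{kd schemes} by smooth descent. The rightmost vertical equivalence $\hCoh(\bT_X[\ng1]/\bG_m) \simeq \Coh(\bT^*_X/\bG_m)$ is already Theorem \ref{graded functoriality thm}, so the main task is to construct the middle (filtered) equivalence $\hCoh(\bT_X[\ng1])^{\BGA} \simeq F\cD_c(X)^{\shear}$; the leftmost equivalence then follows by applying the graded Tate construction $(-) \otimes_{\QCoh(\bA^1/\bG_m)} \QCoh(\bG_m/\bG_m)$ to both sides.

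First, I would choose a smooth atlas $p: U \rightarrow X$ by a disjoint union of smooth affine schemes, with Cech nerve $U_\bullet \rightarrow X$. For each $U_n$, Theorem \ref{kd schemes} produces an equivalence $\IndCoh(\bT_{U_n}[\ng1])^{\omega \BGA} \simeq F\cD(U_n)^{\shear}$ that is natural with respect to smooth pullback, hence compatible with the face and degeneracy maps of the Cech nerve. Taking totalizations and invoking smooth descent on both sides --- Proposition \ref{odd tangent indcoh descent} for $\BGA$-equivariant ind-coherent sheaves (applied to the small categories $\Coh(\bT_{U_n}[\ng1])^{\BGA}$ before ind-completion, since the operation $(-)^{\omega \BGA}$ is defined via small categories in Definition \ref{ren G inv}) and Definition \ref{def filt dmod stack} for $F\cD$ --- produces the large category equivalence $\IndCoh(\bT_X[\ng1])^{\omega \BGA} \simeq F\cD(X)^{\shear}$ together with compatible restriction, forgetful, induction, and smooth pullback functors.

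Next, I would descend to the small subcategories $\hCoh(\bT_X[\ng1])^{\BGA}$ and $F\cD_c(X)$, using that both are characterized by a local $!$-pullback criterion along the chosen atlas: Theorem \ref{graded functoriality thm}(4) characterizes $\hCoh(\bT_X[\ng1])^{\BGA}$ as the full subcategory of objects $\cF$ with $dp^! \cF \in \Coh(\bT_U[\ng1])^{\BGA}$ (using Proposition \ref{prop coh equals} for the equality $\hCoh = \Coh$ on the smooth scheme $U$), while Definition \ref{def coh d mod} characterizes $F\cD_c(X)$ by the condition $p^!\cM \in F\cD_c(U)$. Since the scheme-level equivalence in Theorem \ref{kd schemes} restricts to an equivalence of compact objects $\Coh(\bT_U[\ng1])^{\BGA} \simeq F\cD_c(U)^{\shear}$ and commutes with $!$-pullbacks along smooth maps, the two subcategories are interchanged. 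Compatibility with induction, restriction, and smooth pullback descends from the scheme case via naturality of the limit, and the exchange of singular support follows from the special-fiber equivalence $\hCoh(\bT_X[\ng1]/\bG_m) \simeq \Coh(\bT^*_X/\bG_m)$ of Theorem \ref{graded functoriality thm}: the support of $\gr(\cM)$ on $\bT^*_X/\bG_m$ --- by definition the singular support of $\cM \in F\cD_c(X)$ --- corresponds under Koszul duality to the support of the associated special-fiber object on the $\hCoh$ side.

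The main technical obstacle, which must be handled carefully throughout, is the renormalization of $\BGA$-invariants: ind-completion does not commute with totalizations in general, so the descent cannot be run directly on the presentable categories $\IndCoh(\bT_{U_n}[\ng1])^{\omega \BGA}$ as ambient $\infty$-categories, but must instead be performed on the small categories of compact objects $\Coh(\bT_{U_n}[\ng1])^{\BGA}$, after which the resulting totalization is ind-completed to produce the large category on $X$. This is precisely why Definition \ref{ren G inv} builds the renormalized invariants from small categories; once this subtlety is respected, the remainder of the descent is formal.
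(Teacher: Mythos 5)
Your proposal matches the paper's proof in all essentials: descend from the scheme case (Theorem \ref{kd schemes}) along a Cech nerve, identify $\hCoh(\wh{\bT}_X[\ng1])^{\BGA}$ and $F\cD_c(X)$ inside the totalizations via their local $!$-pullback characterizations (Theorem \ref{graded functoriality thm}(4), Definition \ref{def coh d mod}, Corollary \ref{ff limit}), and handle the renormalization of $\BGA$-invariants by totalizing small categories rather than presentable ones. The only slip is the intermediate assertion that ind-completing the totalization of $\Coh(\bT_{U_\bullet}[\ng1])^{\BGA}$ recovers $\IndCoh(\bT_X[\ng1])^{\omega\BGA} = \Ind(\Coh(\wh{\bT}_X[\ng1])^{\BGA})$; it actually yields $\Ind(\hCoh(\wh{\bT}_X[\ng1])^{\BGA})$, which differs for stacks since $\Coh \ne \hCoh$ there, but this large-category statement is never used in your argument for the small-category equivalence that the theorem asserts.
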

\begin{proof}  
First, we define for any smooth scheme $U$ the functor
$$\Psi_U: \IndCoh(\bT_U[\ng1])^{\omega B\bG_a} \rightarrow \QCoh(\bT_U[\ng1])^{B\bG_a^{\gr}}$$ 
to be the unique colimit-preserving functor whose restriction to $\Coh(\bT_U)^{B\bG_a^{\gr}}$ is the fully faithful inclusion 
$$\Coh(\bT_U[\ng1])^{B\bG_a^{\gr}} \hookrightarrow \QCoh(\bT_U[\ng1])^{B\bG_a^{\gr}}.$$ 
The forgetful functor intertwines this functor with the canonical functor (abusively denoted):
$$\Psi_U: \IndCoh(\bT_U[\ng1]) \rightarrow \QCoh(\bT_U[\ng1])$$
defined in \cite{indcoh}.  We now establish the middle equivalence, with the other two following by passing to the generic and special fibers.  Choose a smooth atlas $p: U \rightarrow X$.  First, recall the descent statement from  \cite[Thm. 6.6]{loops and conns} for quasi-coherent sheaves on odd tangent bundles:
\begin{equation}\label{bn descent}
\QCoh(\bT_X[\ng1])^{B\bG_a^{\gr}} \simeq \Tot(\QCoh(\bT_{U_\bullet}[\ng1])^{B\bG_a^{\gr}}).
\end{equation}
The claim follows from the diagram:
$$\begin{tikzcd}[column sep=12ex]
\QCoh(\wh{\bT}_X[\ng1])^{\BGA} \arrow[r, "\simeq", "{(\ref{bn descent})}"'] & \Tot(\QCoh(\wh{\bT}_{U_\bullet}[\ng1])^{\BGA})  & \arrow[l, "\Tot(\Psi_{U_\bullet} \circ \kappa^{\ng1})"']  \Tot(F\cD(U_\bullet)) \arrow[r, "\simeq", "{\mathrm{Def.} \ref{def filt dmod stack}}"']& F\cD(X)\\
\hCoh(\wh{\bT}_X[\ng1])^{\BGA} \arrow[r, "\simeq", "{\substack{\mathrm{Prop.} \ref{ff limit} \\ \mathrm{Prop.} \ref{graded functoriality thm}}}"'] \arrow[u, hook] & \Tot(\Coh(\bT_{U_\bullet}[\ng1])^{\BGA}) \arrow[u, hook]& \arrow[l, "\simeq"', "{\mathrm{Thm.} \ref{kd schemes}}"] \Tot(F\cD_c(U_\bullet)) \arrow[r, "\simeq","\substack{\mathrm{Def.} \ref{def coh d mod} \\ {\mathrm{Prop.} \ref{ff limit}}}"']\arrow[u, hook]& F\cD_c(X).\arrow[u, hook]
\end{tikzcd}$$
That is, we use the characterization of $\wh{\Coh}$ in Proposition \ref{graded functoriality thm} and the fact that the equivalences $\kappa'_{U_\bullet}$ from Theorem \ref{kd schemes}  restrict coherent objects.  The compatibility for pullback and restriction follows by descent, induction by passing to left adjoints, and the claim for singular supports follows since singular supports can be computed on atlases.
\end{proof}

\subsubsection{} There are two large-category versions of the above theorem.  The first one is ``renormalized'', i.e. we simply ind-complete the categories appearing in the above theorem.  We denote the category of \emph{ind-coherent filtered $\cD$-modules} by $F\rnD(X) := \Ind(F\cD_c(X))$.
\begin{cor}[Ind-coherent Koszul duality for stacks]
Let $X$ be a smooth QCA stack.  We have compatible equivalences
$$\begin{tikzcd}
\Ind\!\hCoh(\wh{\bT}_X[\ng1])^{\grTate} \arrow[d, "\simeq"'] & \Ind\!\hCoh(\wh\bT_X[\ng1])^{\omega\BGA} \arrow[r] \arrow[d, "\simeq"'] \arrow[l] & \Ind\!\hCoh(\wh\bT_X[\ng1]/\bG_m) \arrow[d, "\kappa", "\simeq"'] \\
\rnD(X) & F\rnD(X)  \arrow[r, "\gr"'] \arrow[l, "\un"] & \IndCoh(\mathbb{T}_{X}^*/\bG_m)
\end{tikzcd}$$
compatibly with schematic pullback, induction and restriction, and exchanging the notions of singular support.  When $f$ is proper, by passing to left adjoints, the equivalence is compatible with pushforward.
\end{cor}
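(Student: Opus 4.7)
The plan is to deduce the ind-coherent statement from the coherent Koszul duality of Theorem \ref{kd kperf} by applying the ind-completion functor $\Ind: \cat{St}_k \to \cat{Pr}^L_k$, which preserves equivalences. The key point is that each of the three columns on the coherent side ind-completes to the corresponding column on the ind-coherent side. For the middle column, $\Ind(\hCoh(\wh{\bT}_X[\ng1])^{\BGA}) = \hCoh(\wh{\bT}_X[\ng1])^{\omega\BGA}$ is literally the definition of the compactly renormalized invariants (Definition \ref{ren G inv}), and $\Ind(F\cD_c(X)) = F\rnD(X)$ is the definition of $F\rnD(X)$. For the right column, $\Ind(\Coh(\bT^*_X/\bG_m)) = \IndCoh(\bT^*_X/\bG_m)$ holds by the QCA hypothesis \cite{QCA}, while $\Ind(\hCoh(\wh{\bT}_X[\ng1]/\bG_m))$ is the corresponding ind-completion, which equals the category denoted $\Ind\!\hCoh$ in the statement.

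The first step is therefore to observe that Theorem \ref{kd kperf} gives equivalences of small stable $\infty$-categories that are $\cat{Vect}^{\BGA}$-linear; applying $\Ind$ yields equivalences of presentable $\cat{Vect}^{\omega\BGA}$-module categories. The second step is to identify the generic and special fibers with the left and right columns. Since the three columns of Theorem \ref{kd kperf} are related by base change along the functors $\cat{Vect}^{\BGA} \to \cat{Vect}^{\bG_m}$ (the special fiber) and $\cat{Vect}^{\BGA} \to \cat{Vect}^{\Tate^{\gr}}$ (the generic fiber), and since tensor product of presentable module categories commutes with ind-completion when the small input is rigid, the ind-completed columns assemble into the stated diagram. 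Concretely, one uses
\[
\Ind(F\cD_c(X)) \otimes_{\QCoh(\bA^1/\bG_m)} \QCoh(\{0\}/\bG_m) \simeq \IndCoh(\bT^*_X/\bG_m),
\]
which follows from Theorem \ref{kd kperf} at the level of compact objects together with the identification of $\gr$ for coherent objects.

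The third step is functoriality. Smooth pullback, induction, and restriction functors on the coherent side of Theorem \ref{kd kperf} are given by colimit-preserving functors whose left adjoints preserve compactness (restriction is even a left adjoint by construction; smooth pullback preserves coherent objects by Theorem \ref{graded functoriality thm}; induction preserves compacts tautologically). Hence each functor extends uniquely to the ind-completions and the intertwining diagrams commute on compacts, therefore on all objects by colimit-preservation. The compatibility with singular support is immediate since singular support is defined from the underlying coherent/compact objects and the equivalence of Theorem \ref{kd kperf} was shown to exchange singular supports.

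I do not expect a genuine obstacle here: this is a formal consequence of Theorem \ref{kd kperf}. The only point requiring slight care is verifying that the presentable tensor products correctly produce the generic and special fiber categories after ind-completion; this reduces to the standard fact that for a rigid monoidal category $\cat{A}$ acting on a compactly generated category $\cat{C}$ with action preserving compacts, one has $\Ind(\cat{C}^c \otimes_{\cat{A}^c} \cat{B}^c) \simeq \cat{C} \otimes_{\cat{A}} \cat{B}$, applied to $\cat{A} = \cat{Vect}^{\BGA}$ and $\cat{B} \in \{\cat{Vect}^{\bG_m}, \cat{Vect}^{\Tate^{\gr}}\}$.
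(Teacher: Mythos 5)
Your proposal is correct and is essentially the paper's own argument: the paper offers no separate proof of this corollary beyond the remark immediately preceding it that one "simply ind-completes the categories appearing in" Theorem \ref{kd kperf}, which is exactly your strategy, and your additional checks (that the three columns ind-complete to the stated categories, that the fibers are recovered via relative tensor products over the rigid base, and that the functors extend because their left adjoints preserve compacts) are the routine verifications left implicit there. The only phrasing to tighten is the parenthetical on restriction: it is the right adjoint of induction, and its compatibility on the ind-completions is obtained by passing to right adjoints from the compatibility of the compact-preserving induction functor, which is what your criterion "whose left adjoints preserve compactness" already delivers.
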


We also have the following unrenormalized version. 
\begin{thm}[Koszul duality for stacks]
Let $X$ be a smooth QCA stack.  We have compatible equivalences
$$\begin{tikzcd}
\IndCoh(\wh{\bT}_X[\ng1])^{\grTate} \arrow[d, "\simeq"'] & \IndCoh(\wh{\bT}_X[\ng1])^{\omega\BGA} \arrow[r] \arrow[d, "\simeq"'] \arrow[l] & \IndCoh(\wh{\bT}_X[\ng1])^{\G_m} \arrow[d, "\kappa", "\simeq"'] \\
\cD(X) & F\cD(X)  \arrow[r, "\gr"'] \arrow[l, "\un"] & \QCoh(\mathbb{T}_{X}^*)^{\G_m} 
\end{tikzcd}$$
compatibly with schematic pullback, induction and restriction, and exchanging the notions of singular support.  When $f$ is proper, by passing to left adjoints, the equivalence is compatible with pushforward.
\end{thm}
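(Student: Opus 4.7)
The plan is to deduce the large-category statement from Theorem \ref{kd kperf} (the coherent Koszul duality for stacks) by ind-completion, and then extract the left and right columns as fibers of the middle equivalence over $\bA^1/\bG_m$.

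For the middle column, I would show that under the equivalence $\kappa: \hCoh(\wh{\bT}_X[\ng1])^{B\bG_a^{\gr}} \xrightarrow{\simeq} F\cD_c(X)$ of Theorem \ref{kd kperf}, the full subcategory $\Coh(\wh{\bT}_X[\ng1])^{B\bG_a^{\gr}}$ corresponds to the safe subcategory $F\cD_s(X)$. By the definition in Section \ref{linear koszul}, $\Coh(\wh{\bT}_X[\ng1])$ is the smallest stable idempotent-complete subcategory generated by the essential image of $\Coh(X)$ under $z_*$, where $z: X \hookrightarrow \bT_X[\ng1]$; and by Definition \ref{def coh d mod} (interpreted via the scheme-case Definition \ref{filt dmod schemes}), $F\cD_s(X)$ is the idempotent completion of the image of $\Coh(X)^{\fil}$ under induction. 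Theorem \ref{graded functoriality thm}(5) identifies these two generating functors at the level of the equivalence $\kappa$ (first established at the scheme level in Theorem \ref{kd schemes} and then transferred to stacks by the descent argument proving Theorem \ref{kd kperf}), giving the identification of small subcategories. Taking $\Ind$ of both sides then yields $\IndCoh(\wh{\bT}_X[\ng1])^{\omega B\bG_a^{\gr}} \simeq F\cD(X)$, since by Definition \ref{ren G inv} the left-hand side unwinds to $\Ind(\Coh(\wh{\bT}_X[\ng1])^{B\bG_a^{\gr}})$, and $F\cD(X)$ is compactly generated by $F\cD_s(X)$.

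To extract the generic fiber (left column), I would tensor both sides of the middle equivalence over $\QCoh(\bA^1/\bG_m)$ with $\QCoh(\bG_m/\bG_m)$: this is the defining operation for the graded Tate construction on the left (Definition \ref{def tate}), and it produces $\cD(X)$ on the right via the $t=1$ specialization built into Definition \ref{def filt dmod stack}. To extract the special fiber (right column), I would tensor with $\QCoh(\{0\}/\bG_m)$: this yields the associated graded $\QCoh(\bT^*_X)^{\bG_m}$ on the right, and on the left it collapses the renormalized $B\bG_a^{\gr}$-invariants to plain $\bG_m$-invariants via Proposition \ref{ren bga}, producing $\IndCoh(\wh{\bT}_X[\ng1])^{\bG_m}$, whose identification with $\QCoh(\bT^*_X)^{\bG_m}$ is the graded linear Koszul duality of Theorem \ref{kd graded no functoriality}. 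Compatibility with smooth pullback, induction, and restriction is inherited from Theorem \ref{kd schemes} via the descent used to prove Theorem \ref{kd kperf}; the compatibility of singular supports is automatic, since the notions on both sides are defined through the Koszul equivalence.

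The main obstacle will be carefully verifying the identification of the two nested pairs $\Coh \subset \hCoh$ and $F\cD_s \subset F\cD_c$ under $\kappa$. The equivalence of Theorem \ref{kd kperf} is a statement relating $\hCoh$ to $F\cD_c$, and while the smallness of both $\Coh$ and $F\cD_s$ is controlled by the same generator $\Coh(X)$ on the base, one must verify that the identifications of generating functors are compatible with the $B\bG_a^{\gr}$-equivariance and with idempotent completion on both sides. A potential alternative approach via direct smooth descent from Theorem \ref{kd schemes} would sidestep this issue, but it raises the separate question of descent for compactly renormalized invariants, which is not a formal consequence of the descent for ind-coherent sheaves (Proposition \ref{odd tangent indcoh descent}) since ind-completion does not commute with limits.
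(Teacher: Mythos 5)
Your proposal is correct and takes essentially the same route as the paper: the paper's own proof deduces the theorem from Theorem \ref{kd kperf} together with exactly your key observation, namely that the compact objects of $F\cD(X)$ are generated by inductions from $\Coh(X)$ while $\Coh(\wh{\bT}_X[\ng1])$ is generated by pushforwards from $\Coh(X)$, so that the small subcategories match under $\kappa$ and one concludes by ind-completion and passage to the generic and special fibers. Your elaboration of the identification $\Coh \leftrightarrow F\cD_s$ via Theorem \ref{graded functoriality thm}(5) and of the fiber extraction over $\bA^1/\bG_m$ just fills in details the paper leaves implicit.
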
 
\begin{proof}
The result follows from Theorem \ref{kd kperf}, and the observation that the compact objects in $F\cD(X)$ are generated by inductions from $\Coh(X)$, while $\Coh(\wh{\bT}_X[\ng1])$ is generated by objects pushed forward from $\Coh(X)$.
\end{proof}

\begin{rmk}\label{kd weak}
There is a category of weakly $G$-equivariant $\cD$-modules that sits between strongly $G$-equivariant $\cD$-modules and non-equivariant $\cD$-modules:
$$F\cD_c(X/G) \longrightarrow F\cD_c(X\brslash G) \longrightarrow F\cD_c(X).$$
On the Koszul dual side, we have a corresponding sequence of maps and functors
$$\bT_X[\ng1] \longrightarrow \bT_X[\ng1]/G \longrightarrow \wh{\bT}_{X/G}[\ng1],$$
$$\Coh(\bT_X[\ng1])^{B\bG_a^{\gr}} \longrightarrow \Coh(\bT_X[\ng1]/G)^{B\bG_a^{\gr}} \longrightarrow \hCoh(\wh{\bT}_{X/G}[\ng1])^{B\bG_a^{\gr}}.$$
\end{rmk}

\subsection{Example: classifying stacks.}\label{dmod BG exmp} We describe the main theorem in the case of classifying stacks $X = BG$, where $G$ is an affine algebraic group.  In the following we will always use \emph{degree} to refer to cohomological degree, and \emph{weight} to refer to internal $\bG_m$ weight.  We will use the Tate (un)shearings from Definition \ref{tate shear}:
$$M^{\unshear} = \bigoplus_{n \in \bZ} M_n[2n], \;\;\;\;\;\;\;\;\;\; N^{\shear} = \bigoplus_{n \in \bZ} N_n[-2n].$$
We wish to explain the categories and functors that appear in the following diagram.
$$\begin{tikzcd}[column sep=15ex]
\Coh(\asterisk)^{B\bG_a^{\gr}} = \Mod^{\gr}_{\mathrm{fd}}(C_\bullet(S^1; k)) \arrow[r, shift left=1ex, "{\Hom_{C_\bullet(S^1; k)}(k, -)}", "\simeq"'] & \Mod^{\gr}_{\mathrm{fg}}(C^\bullet(BS^1; k)) \arrow[l, shift left=1ex, "{- \otimes_{C^\bullet(BS^1; k)} k}"] \arrow[r, shift left=1ex, "\unshear"] & \arrow[l, shift left=1ex, "\simeq"', "\shear"] F\cD_c(\asterisk) = \Mod_{\mathrm{fg}}(\Rees(k)) \\
\wh{\Coh}(\wh{\mf{g}}/G)^{B\bG_a^{\gr}} \arrow[rr, shift left=1ex, "\simeq"', "\unshear \circ \kappa"] \arrow[u]  & & F\cD_c(BG). \arrow[u] \arrow[ll, shift left=1ex, "\kappa^{-1} \circ\, \shear"] 
\end{tikzcd}$$
The top row is the usual Koszul duality between $C_\bullet(S^1;k)$-modules and $C^\bullet(BS^1; k)$-modules, followed by a shearing identifying graded $C^\bullet(BS^1; k)$-modules with the Rees algebra for the trivial filtration on $k$.  We fix quasi-isomorphisms and coordinates with degree-weight:
$$C_\bullet(S^1; k) \simeq k[\lambda] \text{ where } |\lambda| = (-1, -1),$$ 
$$C^\bullet(BS^1; k) \simeq k[u] \text{ where } |u| = |t^{\shear}| = (2, 1),\;\;\;\;\;\;\;\;\;\;\;\Rees(k) \simeq k[t] \text{ where } |t| = |u^{\unshear}| = (0, 1).$$
%

\subsubsection{} Next, we give an explicit description of the category $\hCoh(\wh{\mf{g}}/G)^{B\bG_a^{\gr}}$ as follows.  Its objects consist of weight-graded bounded coherent complexes $(M^\bullet, d_M)$ with the following structure:
\begin{enumerate}[(a)]
\item a $\mf{g}$-representation structure, i.e. assigning $\mf{g}$ weight 1, a weight $1$ action map $\alpha_M: \mf{g} \otimes M^\bullet \rightarrow M^\bullet$  commuting with the differentials,
\item a locally nilpotent $\cO(\mf{g})$-module structure, i.e. assigning $\mf{g}^*$ weight -1, a weight 0 action map $a_M: \mf{g}^* \otimes M^\bullet \rightarrow M^\bullet$ commuting with the differentials, and
\item a square-zero nullhomotopy of the universal endomorphism\footnote{Morally, this universal endomorphism of sheaves on $\mf{g}/G$ is the derivative of the universal automorphism on sheaves on the loop space $\cL(BG) = G/G$ given by the $S^1$-action, i.e. $g \in G$ fixes $g$ under the adjoint action, and for $\cF \in \QCoh(G/G)$ the automorphism on the fiber $\cF_g$ is by $g$-equviariance.} of $\Coh(\mf{g}/G)$, i.e. a weight $-1$ degree $-1$ map 
$$\delta_M \in \Hom^{-1}_k(M^\bullet, M^\bullet)  \hspace{3ex} \text{ such that } \hspace{3ex}  \delta_M^2 = 0 \;\;\text{ and }\;\; [d_M, \delta_M] := d_M \delta_M - (-1)^{|\bullet|} \delta_M d_M = a_M \circ \gamma_M$$
where $\gamma_M: \gamma_V: V \rightarrow V \otimes \mf{g}^*$ is the representation $\mf{g}^*$-coaction, i.e. the weight -1 dual to $\alpha_M$.
\end{enumerate}
Furthermore, these structures are required to be compatible: $a_M$ is required to be a map of $\mf{g}$-representations where $\mf{g}^*$ is viewed as the dual adjoint representation, and $\delta_M$ is required to be a map of $\mf{g}$-representations and $\cO(\mf{g})$-modules.

\subsubsection{} The pullback functor on odd tangent bundles forgets the $G$-structure and $!$-restricts to $0 \in \mf{g}$.  To compute this $!$-restriction we use the following \emph{curved} Koszul resolution of the augmentation $\cO(\mf{g})$-module:
$$\cK_{\mf{g}}^\bullet := \left(\begin{tikzcd}
\cO(\mf{g}) \otimes \det(\mf{g}^*) \arrow[r, shift left] & \arrow[l, shift left] \cdots \arrow[r, shift left]  & \arrow[l, shift left] \cO(\mf{g}) \otimes \Wdg^2 \mf{g}^* \arrow[r, shift left] & \cO(\mf{g}) \otimes \mf{g}^* \arrow[r, shift left, "a_{\cO(\mf{g})}"]  \arrow[l, shift left] & \cO(\mf{g}) \arrow[l, shift left, "\gamma_{\cO(\mf{g})}"] 
\end{tikzcd}\right) \in \Coh(\wh{\mf{g}}/G)$$
where the module $\mf{g}$-action map $a_{\cO(\mf{g})}$ is extended via the Leibniz rule, and the representation $\mf{g}^*$-coaction is extended via the dual Leibniz rule (and squares to zero due to associativity of the action).  The $!$-restriction, i.e. $\Hom_{\cO(\mf{g})}(\cK^\bullet_{\mf{g}}, \cM^\bullet)$, is the complex
$$(M^\bullet \otimes \Symp(\mf{g}[-1]), d := d_M \otimes \mathrm{id} + \wt{c}_M, \delta := \delta_M \otimes \mathrm{id} + \mathrm{id} \otimes \alpha_M)$$ 
where $\wt{c}_M: M^\bullet \rightarrow M^\bullet \otimes \mf{g}$ is the action by the Euler vector field, i.e, $\wt{c}_M(m) = \sum x_i \cdot m \otimes y_i$ where $x_i$ and $y_i$ are dual bases of $\mf{g}^*$ and $\mf{g}$.  One can verify that $[d, \delta] = 0$.    Finally, one can apply the usual Koszul duality functor and Tate unshearing to obtain:
$$(M^{\bullet \unshear} \otimes \Symp(\mf{g}[1]) \otimes k[t], d := d_M \otimes \mathrm{id} + \wt{c}_M + \delta_M t \otimes \mathrm{id}  + \mathrm{id} \otimes \alpha_M t).$$

\subsubsection{} The objects of the category $F\cD_c(BG)$ of coherent filtered $\cD$-modules are bounded coherent $k[t]$-complexes $(N^\bullet, d_N)$ of $G$-representations along with a weight 0 degree -1 morphism
$$h_N: \mf{g} \otimes N^\bullet \rightarrow N^\bullet \hspace{3ex} \text{ such that } \hspace{3ex} h_N^2 = 0 \;\;\text{ and }\;\; [d_N, h_N] := d_Nh_N - (-1)^{|\bullet|} h_Nd_N = \alpha_N t$$
witnessing the strong equivariance condition homotopically.  The pullback functor on filtered $\cD$-modules simply forgets both the $G$-representation structure and the map $h$, i.e. takes the underlying $k[t]$-module $N^\bullet$.  We let the weight 1 degree -1 map $h_y: N^\bullet \rightarrow N^\bullet$ denote the evaluation at $y \in \mf{g}$.

\subsubsection{} Then, to describe the Koszul duality functor $\wh{\Coh}(\wh{\mf{g}}/G)^{\BGA} \rightarrow F\cD_c(BG)$, we may consider the functor
$$\wh{\Coh}(\wh{\mf{g}}/G)^{\BGA} \rightarrow \Coh(\asterisk)^{\BGA} \rightarrow F\cD_c(\asterisk)$$
described above, but instead we do not forget the $G$-structure and recover the map $h_y$ as multiplication by $y \in \mf{g}[1]$.

\subsubsection{} We now turn our attention to the case $X = BG$ where $G$ is commutative, where there is a simple geometric description of the categories on the $\Omega$-side of the equivalence in terms of matrix factorizations; for details, the reader may consult \cite[\textsection 6]{MF} or \cite{ihes}.  The derived loop space, i.e. the adjoint quotient decomposes into a product $\cL(BG) \simeq G \times BG$.  We assume the existence of a Cartier dual group, i.e. an algebraic group $\wc{G}$ and a monodial equivalence
$$(\QCoh(BG), \otimes) \simeq (\QCoh(\wc{G}), \otimes).$$
In particular, this determines equivalences
$$\Coh(\cL(BG)) \simeq \Coh(G \times \wc{G}), \;\;\;\;\;\;\;\;\;\; \Coh(\wh{\bT}_{BG}[\ng1]) \simeq \Coh(\wh{\mf{g}} \times \wc{G}).$$
Note that the $S^1 = B\bZ$, and that the Cartier dual of $\bZ$ is $\bG_m$, while the Cartier dual of $B\bG_a$ is $\wh{\bG}_a$.  In particular any $S^1$ action or $B\bG_a^{\gr}$-action has a Cartier dual description as an action of the monoidal category $\QCoh(\bG_m)$ or $\QCoh(\wh{\bG}_a)$.  In this setting, by passing through Cartier duality the actions have geometric origin via pullback along the maps
$$\mathrm{ev}: G \times \wc{G} \rightarrow \bG_m, \;\;\;\;\; (g, \chi) \mapsto \chi(g); \;\;\;\;\;\;\;\;\;\; \mathrm{ev}: \wh{\mf{g}} \times \wc{G} \rightarrow \wh{\bG}_a, \;\;\;\;\; (x, \chi) \mapsto \chi(\exp(x)).$$
This leads to explicit descriptions of the $B\bG_a$-equivariant categories:
$$\Coh(\bT_{BG}[\ng1])^{B\bG_a^{\gr}} \simeq \Coh(\{0\} \times_{\wh{\bA}^1} (\wh{\mf{g}} \times \wc{G})), \;\;\;\;\;\;\;\;\;\;\; \Coh(\bT_{BG}[\ng1])^{\Tate} \simeq \mathrm{MF}(\wh{\mf{g}} \times \wc{G}, \mathrm{ev}).$$
We refer the reader to \cite{MF, toby} for a discussion of matrix factorizations.   We describe the Koszul duality equivalence explicitly in two opposite examples: commutative reductive groups, and commutative unipotent groups.

\subsubsection{}  When $G = \bG_m$, we identify $X^\bullet(G) \simeq \bZ$ the via the Cartier duality equivalence $\IndCoh(\wh{\mf{g}}/G) \simeq \IndCoh(\wh{\bG}_a \times \bZ)$ where the $B\bG_a$-action is given by the map
$$\mf{g} \times \widecheck{G} = \wh{\bG}_a \times \bZ \rightarrow \wh{\bA}^1, \;\;\;\;\;\;\;\;\;\;\; (x, n) \mapsto nx.$$
Then we have
$$\wh{\Coh}(\wh{\mf{g}}/G)^{\BGA} \simeq \bigoplus_{n \in \bZ} \Mod_{x\mh\mathrm{nil}, \mathrm{f.g.}}(k[x, \delta], d(\delta) = nx), \;\;\;\;\;\;\;\;\;\; |x| = (0, -1), \;|\delta| = (-1, -1)$$
$$F\cD_c(X) \simeq \bigoplus_{n \in \bZ} \Mod_{\mathrm{f.g.}}(k[t, h_x], d(h_x) = nt), \;\;\;\;\;\;\;\;\;\; |t| = (0, 1), \; |h_x| = (-1, 1).$$
We view $(x, t)$ and $(\delta, h_x)$ as dual variables; in particular, inverting $t$ kills all blocks except the trivial block.  Koszul duality exchanges the following objects, where $k(n)$ denotes the weight $n$ character for $G = \bG_m$, $\iota^{(m)}: \mf{g}^{(m)} \hookrightarrow \mf{g}$ is the closed embedding of the $m$th infinitesimal neighborhood of $\{0\} \subset \mf{g}$ and $\iota: \wh{\mf{g}} \hookrightarrow \mf{g}$ is the inclusion of the formal neighborhood:
\begin{table}[H]
\centering
\begin{tabular}{c|c|c|c|c}
& $\wh{\Coh}(\wh{\mf{g}}/G)$ & $\wh{\Coh}(\wh{\mf{g}}/G)^{\BGA}$ & $F\cD_c(X)$  & $\cD_c(X) \simeq\Mod_{\mathrm{fg}}(C_\bullet(S^1))$ \\ \hline 
& & & & \\[-1em]
$n \ne 0$ & $\iota^{(0)}_* \cO_{\{0\}/G}(n)$ & $k(n)$ & $k[t]$  & $0$\\
& & & & \\[-1em]
\tablefootnote{Note that $\iota^{(m)}_* \cO_{\mf{g}^{(m)}/G}(n)$ does not admit a $B\bG_a$-equivariant structure when $n \ne 0$, since the module does not live on the derived zero fiber of the function $nx$.  Also note that the $C_\bullet(S^1)$-module $C_\bullet(S^{2m+1})$ is not formal, i.e. isomorphic to its cohomology, since the $(m+2)$-ary $A_\infty$-action of $H^1(S^1)$ on $H^0(S^{2m+1})$ is non-trivial.} & $\iota^{(m)}_* \cO_{\mf{g}^{(m)}/G}$ & $k[x]x^{-m}/k[x]x$ & $C_\bullet(S^{2m+1})[t]$ & $C_\bullet(S^{2m+1})$ \\
& & & & \\[-1em]
& $\iota^! \omega_{\mf{g}/G} \simeq \omega_{\wh{\mf{g}}/G}$ & $k[x,x^{-1}]/k[x]x$ & $C_\bullet(ES^1)[t] \simeq k[t]$ & $C_\bullet(ES^1) \simeq k$ \\ 
& & & & \\[-1em]
& $\iota^{(m)}_* \cO_{\mf{g}^{(n)}/G} \otimes C_\bullet(S^1)$  & $k[x, \delta]x^{-m}/k[x]x $ & $C_\bullet(S^{2m+1})[t]/t^{m+1}$ & $0$ \\
& & & & \\[-1em]
& $\iota^! \cO_{\mf{g}/G} \otimes C_\bullet(S^1)$ & $k[x, x^{-1}, \delta]/x$ & $C_\bullet(ES^1) \simeq k$ & $0$
\end{tabular}
\end{table}
\noindent On the $\Omega$-side, we have free resolutions and nullhomotopies (indicated by dashed arrows) of objects:
$$\iota^{(0)}_*\cO_{\{0\}/G}(n) \simeq \left(\begin{tikzcd} k(n)[x] \arrow[r, "x", shift left] & k(n)[x]  \arrow[l, dashed, "n", shift left] \end{tikzcd}\right), \;\;\;\;\;\;\; \iota^{(0)}_* \cO_{\{0\}/G} \otimes C_\bullet(S^1) \simeq \left(\begin{tikzcd} k[x] \arrow[r, "0", shift left] & k[x]  \arrow[l, dashed, "1", shift left] \end{tikzcd}\right).$$
Passing to the Tate categories, these define matrix factorizations which are isomorphic to zero, except in the case $\iota^{(0)}_*\cO_{\{0\}/G}(n)$ when $n=0$.

\subsubsection{}\label{BGa exmp} When $G = \bG_a$, Cartier duality gives an equivalence $\IndCoh(\wh{\mf{g}}/G) \simeq \IndCoh(\wh{\bG}_a \times \wh{\bG}_a)$ where the $B\bG_a$-action is given by the map
$$\mf{g} \times \widecheck{G} = \wh{\bG}_a \times \wh{\bG}_a \rightarrow \wh{\bA}^1, \;\;\;\;\;\;\;\;\;\; (x, y) \mapsto xy.$$
Here, we view $x \in \mf{g}^*$ as a coordinate for the Lie algebra of $\bG_a$, and $y$ as the nilpotent operator determining the $G = \bG_a$-action via the exponential map.  We have
$$\wh{\Coh}(\wh{\mf{g}}/G)^{\BGA} \simeq \Mod_{x,y\mh\mathrm{nil}, \mathrm{f.g.}}(k[x, y, \delta], d(\delta) = xy)$$
$$F\cD_c(X) \simeq \Mod_{y\mh\mathrm{nil}, \mathrm{f.g.}}(k[t, y, h_x], d(h_x) = y t).$$
In particular, inverting $t$ gives $\cat{Vect}_{k, \mathrm{f.g.}}$ on both sides.  This matches the expectation the $\cD(B\bG_a) \simeq \cat{Vect}_k$ since $\bG_a$ is contractible.  We note, however, that \emph{filtered} $\cD$-modules on $B\bG_a$ may still be nontrivial.  Koszul duality exchanges the following objects, where we preserve the notation from the previous example:
\begin{table}[H]
\centering
\begin{tabular}{c|c|c|c|c}
& $\wh{\Coh}(\wh{\mf{g}}/G)$ & $\wh{\Coh}(\wh{\mf{g}}/G)^{\BGA}$ & $F\cD_c(X)$  & $\cD_c(X) \simeq \cat{Vect}_k$ \\  \hline
& & & & \\[-1em]
\tablefootnote{Letting $V_n$ denote the unique indecomposible $(n+1)$-dimensional representation of $\bG_a$, the objects $\iota^{(m)}_*\cO_{\{0\}/G} \otimes V_n$ admit $S^1$-equivariant structures if and only if $n = 0$ and $m = 0$, i.e. we require $xy \in (x^m, y^n)$.  However, there are ``diagonal'' objects where the extension as a $\bG_a$-representation and the extension as a $k[x]$-module interact nontrivially.} & $\iota^{(0)}_* \cO_{\{0\}/G}$ & $k[x,y]/(x, y)$ & $(\Symp^\bullet \mf{g}[1]) \otimes k[t]$  & $k \oplus k[1]$\\
& & & & \\[-1em]
\tablefootnote{This object corresponds to letting $m \rightarrow \infty$, i.e. the dualizing sheaf on the $x$-axis.  We do not allow the corresponding object where we let $n \rightarrow \infty$, i.e. the dualizing sheaf on the $y$-axis, corresponding to the ind-regular representation of $\bG_a$.  However, the resulting matrix factorization is isomorphic to that coming from $\omega_{\wh{\mf{g}/G}}[1]$.} & $\iota^!\omega_{\mf{g}/G} \simeq \omega_{\wh{\mf{g}}/G}$ & $k[x,y]/y$ & $k[t]$ & $k$ \\
\end{tabular}
\end{table}
\noindent The object $\iota^{(0)}_* \cO_{\{0\}/G}$  may be presented via the free resolution with nullhomotopies:
$$\iota^{(0)}_* \cO_{\{0\}/G} \simeq \left(\begin{tikzcd}[ampersand replacement=\&, column sep=huge, every label/.append style = {font = \scriptsize}] k[x, y] \arrow[r, shift left, "{\frac 1 2 \begin{pmatrix} y \\ -x \end{pmatrix}}"] \& \arrow[l, shift left, dashed, "{\frac 1 2 \begin{pmatrix} x & -y \end{pmatrix}}"] k[x, y]^2 \arrow[r, "{\frac 1 2 \begin{pmatrix} x & y \end{pmatrix}}", shift left] \& k[x, y]  \arrow[l, dashed, "{\frac 1 2 \begin{pmatrix} y \\ x \end{pmatrix}}", shift left] \end{tikzcd}\right)$$
which gives rise to the matrix factorization associated to the skyscraper sheaf at the singular point of the affine nodal curve $\Spec k[x,y]/xy$, while $\omega_{\wh{\mf{g}}/G}$ gives rise to the matrix factorization associated to the skyscraper sheaf along the $x$-axis. In particular, we note that $\iota^{(0)}_* \cO_{\{0\}/G}$ and $\omega_{\wh{\mf{g}}/G} \oplus \omega_{\wh{\mf{g}}/G}[1]$ are non-isomorphic as filtered $\cD$-modules, but give rise to isomorphic $\cD$-modules.


\end{document}